\title{\vspace{-0cm}Hilbert-Mumford stability on algebraic stacks and applications to $\mathcal{G}$-bundles on curves}
\author{\vspace{0cm}Jochen Heinloth}
\institution{Universit\"at Duisburg--Essen, Fachbereich Mathematik, Universit\"atsstrasse 2, 45117 Essen, Germany}\\
\email{Jochen.Heinloth@uni-due.de}}}
\date{\vspace{-5ex}} % Empty date or tweak it according to your needs
\journal{\'Epijournal de G\'eom\'etrie Alg\'ebrique} % Epijournal name
\newtheorem{satz}{Satz}[section]
\newtheorem{proposition}[satz]{Proposition}
\newtheorem{theorem}[satz]{Theorem}
\newtheorem{definition}[satz]{Definition}
\newtheorem{lemma}[satz]{Lemma}
\newtheorem{corollary}[satz]{Corollary}
\newtheorem{remark}[satz]{Remark}
\newtheorem{example}[satz]{Example}
\newcommand{\tensor}{\otimes}
\newcommand{\map}[1]{\stackrel{#1}{\longrightarrow}}
\newcommand{\incl}[1]{\stackrel{#1}{\hookrightarrow}}
\newcommand{\un}[1]{\ensuremath{\protect\underline{#1}}}
\def\kbar{{\overline{k}}}
\def\GL{\textrm{GL}}
\def\SL{\textsf{SL}}
 \DeclareMathOperator{\Pic}{Pic}
 \DeclareMathOperator{\Bun}{Bun}
\DeclareMathOperator{\Gr}{Gr} \DeclareMathOperator{\GR}{GR}
\DeclareMathOperator{\Hom}{Hom}
\DeclareMathOperator{\cHom}{\mathcal{H}om}
\DeclareMathOperator{\Isom}{Isom} \DeclareMathOperator{\Map}{Map}
\DeclareMathOperator{\Mor}{Mor} \DeclareMathOperator{\Spec}{Spec}
\DeclareMathOperator{\Proj}{Proj} 
 \DeclareMathOperator{\Aut}{Aut}
 \DeclareMathOperator{\End}{End}
\DeclareMathOperator{\cEnd}{\mathcal{E}nd}
 \DeclareMathOperator{\Lie}{Lie}
\DeclareMathOperator{\supp}{supp}
 \DeclareMathOperator{\tr}{tr}
 \DeclareMathOperator{\rk}{rk}
 \DeclareMathOperator{\Stab}{Stab}
\DeclareMathOperator{\Gal}{Gal} \DeclareMathOperator{\Ram}{Ram}
\DeclareMathOperator{\Res}{Res} 
\DeclareMathOperator{\Ad}{Ad} \DeclareMathOperator{\Centr}{Centr}
\DeclareMathOperator{\Rees}{Rees}
\DeclareMathOperator{\Chain}{Chain}
\DeclareMathOperator{\Transp}{Transp} \DeclareMathOperator{\wt}{wt}
\DeclareMathOperator{\glue}{glue}
\def\Bl{\mathrm{Bl}}
\DeclareMathOperator{\im}{Im} 
\DeclareMathOperator{\act}{act} \DeclareMathOperator{\ad}{ad}
\DeclareMathOperator{\conj}{conj} \DeclareMathOperator{\Char}{char}
\DeclareMathOperator{\ST}{ST}
\def\gr{\mathrm{gr}}
\def\forget{\mathit{forget}}
\def\pr{\mathit{pr}}
\def\1halb{\frac{1}{2}}
\def\pbl{[\![}
\def\pbr{]\!]}
\def\sxymat{\xymatrix@C=1.5ex@R=0.8ex}
\def\grp{$\xymatrix{ R\times_{X}R  \ar[r]^-{\mu} & R \ar@<1ex>[r]^-{s}\ar@<-1ex>[r]_-{t} & X}$}
\def\dar{\ar@<-0.5ex>[r]\ar@<0.5ex>[r]}
\def\tar{\ar[r]\ar@<1ex>[r]\ar@<-1ex>[r]}
\newcommand{\dmap}[2]{\ar@<-0.5ex>[r]_-{#2}\ar@<0.5ex>[r]^-{#1}}
\newcommand{\dotarrow}[2]{\xymatrix{{#1}\ar@{..>}[r]&{#2}}}
\def\cart{\ar@{}[dr]|{\square}}
\def\cB{\mathcal{B}}
\def\cE{\mathcal{E}}
\def\cF{\mathcal{F}}
\def\cG{\mathcal{G}}
\def\cL{\ensuremath{\mathcal{L}}}
\def\cM{\mathcal{M}}
\def\cO{\mathcal{O}}
\def\cP{\mathcal{P}}
\def\cQ{\mathcal{Q}}
\def\cT{\mathcal{T}}
\def\cU{\mathcal{U}}
\def\cV{\mathcal{V}}
\def\cg{\mathfrak{g}}
\def\cu{\mathfrak{u}}
\def\bA{{\mathbb A}}
\def\bD{{\mathbb D}}
\def\bG{{\mathbb G}}
\def\bN{{\mathbb N}}
\def\bP{{\mathbb P}}
\def\bQ{{\mathbb Q}}
\def\bR{{\mathbb R}}
\def\bZ{{\mathbb Z}}
\def\oST{\overline{\ST}}
\DeclareMathOperator{\id}{id}
\begin{document}

%%%%%%%%%%%%%%%%%%%%%%%%%%%%%%%
% Add the title to the document
%%%%%%%%%%%%%%%%%%%%%%%%%%%%%%%

\maketitle

%\contribution{}

%%%%%%%%%%%%%%%%%%%%%
% Dedication (if any)
%%%%%%%%%%%%%%%%%%%%%
%\dedication{}

%%%%%%%%%%%%%%%%%%%%%%%%%%%%%%%%%%%%%%%%%%%%%%%%%%%%%%%%%%
% Add abstract, Keywords, MSC classification (recommended)
% Never remove prelims section, make it rather empty
%%%%%%%%%%%%%%%%%%%%%%%%%%%%%%%%%%%%%%%%%%%%%%%%%%%%%%%%%%
\begin{prelims}

%\vspace{-0.55cm}

\def\abstractname{Abstract}
\abstract{In these notes we reformulate the classical
Hilbert-Mumford criterion for GIT stability in terms of algebraic
stacks, this was independently done by Halpern-Leinster \cite{DHL}.
We also give a geometric condition that guarantees the existence of
separated coarse moduli spaces for the substack of stable objects.
This is then applied to construct coarse moduli spaces for torsors
under parahoric group schemes over curves.}

\keywords{Hilbert-Mumford stability; moduli of $\mathcal{G}$-bundles
on curves}

\MSCclass{14D20;
%Algebraic moduli problems, moduli of vector bundles
14D22;
%Fine and coarse moduli spaces,
14D23
%Stacks and moduli problems20G15
}

\vspace{0.05cm}

\languagesection{Fran\c{c}ais}{%

%\vspace{-0.05cm}
\textbf{Titre. Stabilit\'e de Hilbert-Mumford sur les champs
alg\'ebriques et applications aux $\mathcal{G}$-fibr\'es sur les
courbes} \commentskip \textbf{R\'esum\'e.} Dans ces notes, nous
reformulons le crit\`ere classique de stabilit\'e de Hilbert-Mumford pour la
th\'eorie g\'eom\'etrique des invariants en termes de champs
alg\'ebriques ; cela a \'et\'e fait ind\'epen\-damment par
Halpern-Leinster \cite{DHL}. Nous donnons \'egalement une condition
g\'eom\'etrique qui garantit l'existence d'espaces de modules
grossiers pour le sous-champ des objets stables. Nous l'appliquons ensuite pour construire des espaces de modules grossiers pour des
torseurs sous des sch\'emas en groupes parahoriques sur des
courbes.}

\end{prelims}

%%%%%%%%%%%%%%%%%%%%%
% Content begins here
%%%%%%%%%%%%%%%%%%%%%

\newpage

% Add table of contents (optional)
\setcounter{tocdepth}{1}
\tableofcontents

\section*{Introduction}

The aim of these notes is to reformulate the Hilbert-Mumford
criterion from geometric invariant theory (GIT) in terms of
algebraic stacks (Definition \ref{Def:Lstab}) and use it to give an
existence result for separated coarse moduli spaces.

Our original motivation was that for various moduli problems one has
been able to guess stability criteria which have then been shown to
coincide with stability conditions imposed by GIT constructions of
the moduli stacks. It seemed strange to me that in these
constructions it is often not too difficult to find a stability
criteria by educated guessing, however, in order to obtain coarse
moduli spaces one then has to prove that the guess agrees with the
Hilbert-Mumford criterion from GIT, which often turns out to be a
difficult and lengthy task.

Many aspects of GIT have of course been reformulated in terms of
stacks by Alper \cite{Alper}. Also Iwanari \cite{Iwanari} gave a
clear picture for pre-stable points on stacks and constructed
possibly non-separated coarse moduli spaces. The analog of the
numerical Hilbert-Mumford criterion has been used implicitly in many
places by several authors. Most recently Halpern-Leinster \cite{DHL}
independently gave a formulation, very close to ours and applied it
to construct analogs of the Harder--Narasimhan stratification for
moduli problems under a condition he calls $\Theta$-reductivity.

Our main aim is to give a criterion that guarantees that the stable
points form a separated substack (Proposition \ref{Prop:sep}). Once
this is available, one can apply general results (e.g., the theorems
of Keel and Mori \cite{KM} and Alper, Hall and Rydh
\cite{AlperHallRydh}) to obtain separated coarse moduli spaces
(Proposition \ref{Prop:coarse}). As side effect, we hope that our
formulation may serve as an introduction to the beautiful picture
developed \cite{DHL}.

The guiding examples which also provide our main applications are
moduli stacks of torsors under parahoric group schemes. Using our
method we find a stability criterion for such torsors on curves and
construct separated coarse moduli spaces of stable torsors (Theorem
\ref{Thm:parahoric_moduli}). Previously such moduli spaces had been
constructed for generically trivial group schemes in characteristic
0 by Balaji and Seshadri \cite{BS}, who obtain that in these cases
the spaces are schemes. Also in any characteristic the special case
of moduli of parabolic bundles has been constructed in \cite{HS},
but it seems that coarse moduli spaces for twisted groups had not
been constructed before.

This problem was the starting point for the current article, because
in \cite{HS} most of the technical problems arose in the
construction of coarse moduli spaces by GIT that were needed to
prove cohomological purity results for the moduli stack. The results
of this article allow to bypass this issue and apply to the larger
class of parahoric groups.

The structure of the article is as follows. In Section 1 we state
the stability criterion depending on a line bundle $\cL$ on an
algebraic stack $\cM$. As a consistency check we then show that this
coincides with the Hilbert Mumford criterion for global quotient
stacks (Proposition \ref{Prop:GITstability}). To illustrate the
method we then consider some classical moduli problems and show how
Ramanathan's criterion for stability of  $G$-bundles on curves can
be derived form our criterion rather easily. The same argument
applies to related moduli spaces, as the moduli of chains or pairs.

In Section 2 we formulate the numerical condition on the pair
$(\cM,\cL)$ implying that the stable points form a separated
substack (Proposition \ref{Prop:sep}) and derive an existence result
for coarse moduli spaces (Proposition \ref{Prop:coarse}). Again, as
an illustration we check that this criterion is satisfied in for
GIT-quotient stacks and for $G$-bundles on curves.

Finally in Section 3 we apply the method to the moduli stack of
torsors under a parahoric group scheme on a curve. We construct
coarse moduli spaces for the substack of stable points of these
stacks. For this we also need to prove some of the basic results
concerning stability of parahoric group schemes that could be of
independent interest.

\bigskip
\noindent {\bf Acknowledgments:} This note grew out of a talk given
in Chennai in 2015. The comments after the lecture encouraged me to
finally revise of an old sketch that had been on my desk for a very
long time. I am grateful for this encouragement and opportunity. I
thank M. Olsson for pointing out the reference \cite{DHL} before it
appeared and J. Alper, P. Boalch and the referees for many helpful
comments and suggestions. While working on this problem, discussions
with V.\ Balaji, N.\ Hoffmann, J.\ Martens, A.\ Schmitt have been
essential for me.

%%%%%%%%%%%%%%%%%%%%%%%%%%%%%%%%%%
\section{The Hilbert-Mumford criterion in terms of stacks}
%%%%%%%%%%%%%%%%%%%%%%%%%%%%%%%%%%
Throughout we will work over a fixed base field $k$. The letter
$\cM$ will denote an algebraic stack over $k$, which is locally of
finite type over $k$ and we will always assume that the diagonal
$\Delta\colon \cM \to \cM \times \cM$ is quasi-affine, as this
implies that our stack is a stack for the fpqc topology
(\cite[Corollaire 10.7]{LMB}).

We have two guiding examples in mind: First, global quotient stacks
$[X/G]$ where $X$ is a proper scheme and $G$ is an affine algebraic
group acting on $X$ and second, the stack $\Bun_G$ of $G$-bundles on
a smooth projective curve $C$ for a semi-simple group $G$ over $k$.

%%%%
\subsection{Motivation: The classical Hilbert-Mumford criterion}
%%%%
As the numerical criterion for stability from geometric invariant
theory \cite[Theorem 2.1]{GIT} serves as a guideline we start by
recalling this briefly. To state it and in order to fix our sign
conventions we need to recall the definition of weights of
$\bG_m$-equivariant line bundles:
\subsubsection{Weights of equivariant line bundles}
As usual we denote the multiplicative group scheme by $\bG_m:= \Spec
k[t,t^{-1}]$ and the affine line by $\bA^1:=\Spec k[x]$. The
standard action $$\act\colon \bG_m \times \bA^1 \to \bA^1$$ is given
by $t.x := tx$, i.e. on the level of rings  $\act^{\#}\colon k[x]
\to k[x,t,t^{-1}]$ is given by $\act^{\#}(x)=tx$. We write
$[\bA^1/\bG_m]$ for the quotient stack defined by this action. This
stack is called $\Theta$ in \cite{DHL}.

By definition $\bG_m$-equivariant line bundles $\cL$ on $\bA^1$ are
the same as line bundles on $[\bA^1/\bG_m]$. For any line bundle
$\cL$ on $\bA^1$ the global sections are $H^0(\bA^1,\cL)=k[x] \cdot
e$ for some section $e$ which is unique up to a scalar multiple.

Thus an equivariant line bundle $\cL$ on $\bA^1$ defines an integer
$d\in \bZ$ by $\act^{\#}(e)= t^d e$ called the weight of $\cL$ and
we will denote it as
$$ \wt(\cL) := d.$$
%Geometrically $\cL$ corresponds to $L=\Spec k[x,e]$ and the action  is given by
%$$\act_d\colon \bG_m \times L = \bG_m \times \Spec k[x,e] \to \Spec k[x,e]$$
%with $\act_d^{\#}(e) = t^d \tensor e, \act_d^{\#}(x)=t\tensor x$.
In particular we find that:
$$H^0([\bA^1/\bG_m],\cL)= H^0(\bA^1,\cL)^{\bG_m} = \left\{ \begin{array}{ll} k\cdot x^{d} e & \text{if } \wt(\cL)=d \leq 0 \\ 0 & \text{if } \wt(\cL)=d >0. \end{array} \right.$$
To compare the sign conventions in different articles the above
equation is the one to keep in mind, because Mumford's construction
of quotients uses invariant sections of line bundles.
%We will denote the weight of a $\bG_m$-equivariant line bundle on $\bA^1$ by
%$$ \wt(\cL) := d.$$
Similarly, a $\bG_m$-equivariant line bundle $\cL$ on $\Spec k$ is
given by a morphism of underlying modules $\act^{\#}\colon \cL \to
\cL \tensor k[t,t^{-1}]$  with $\act^{\#}(e) = t^d e$ for some $d\in
\bZ$. The integer $d$ is again denoted by:
$$\wt_{\bG_m}(\cL)\coloneq d.$$
%%%%
\subsubsection{Mumford's notion of stability}
%%%%
Mumford considers a projective scheme $X$, equipped with an action
of a reductive group $G$ and a $G$-linearized ample bundle $\cL$ on
$X$. For any $x\in X(k)$ and any cocharacter $\lambda\colon \bG_m
\to G$ Mumford defines $\mu^{\cL}(x,\lambda)\in \bZ$ as follows. The
action of $\bG_m$ on $x$ defines a morphism $\lambda.x\colon \bG_m
\to X$ which extends to an equivariant morphism $f_{x,\lambda}\colon
\bA^1 \to X$ because $X$ is projective. He defines
$$ \mu^{\cL}(x,\lambda)\coloneq - \wt(f_{x,\lambda}^* \cL).$$

The criterion \cite[Theorem 2.1]{GIT} then reads as follows:

A geometric point $x\in X(\kbar)$ is {\em stable} if and only if the
stabilizer of $x$ in $G$ is finite and for all $\lambda\colon \bG_m
\to G$ we have
$$\mu^L(x,\lambda) >0, \text{ equivalently } \wt(f_{x,\lambda}^*\cL) <0.$$
%%%%
\subsection{$\cL$-stability on algebraic stacks}
%%%%
It is easy to reformulate the numerical Hilbert--Mumford criterion in terms of stacks, once we fix some notation. The quotient stack $[\bA^1/\bG_m]$ has two geometric points $1$ and $0$ which are the images of the points of the same name in $\bA^1$. %Note that by definition line bundles on $[\bA^1/\bG_m]$ are the same as $\bG_m$-equivariant line bundles on $\bA^1$ so we will also use the notation $\wt(\cL)$ for the weight of a line bundle on $[\bA^1/\bG_m]$.
For any algebraic stack $\cM$ and $f\colon [\bA^1/\bG_m]\to \cM$ we
will write $f(0),f(1) \in \cM(k)$ for the points given by the images
of $0,1\in \bA^1(k)$.

\begin{definition}[Very close degenerations]
{\rm Let $\cM$ be an algebraic stack over $k$ and $x\in \cM(K)$ a
geometric point for some algebraically closed field $K/k$.

A {\em very close degeneration of $x$} is a morphism $f\colon
[\bA^1_K/\bG_{m,K}] \to \cM$ with $f(1)\cong x$ and $f(0)\not\cong
x$.}
\end{definition}
Very close degenerations have been used under different names, e.g.
in the context of $K$-stability these are often called
test-configurations. Our terminology should only emphasize that
$f(0)$ is an object that lies in  the closure of a $K$ point of
$\cM_K$, which only happens for stacks and orbit spaces, but if
$X=\cM$ is a scheme, then there are no very close degenerations.

\begin{definition}[$\cL$-stability]\label{Def:Lstab}
{\rm Let $\cM$ be an algebraic stack over $k$, locally of finite
type with affine diagonal and $\cL$ a line bundle on $\cM$. A
geometric point $x\in \cM(K)$ is called {\em $\cL$-stable} if
\begin{enumerate}
\item[\rm (1)] for all very close degenerations $f\colon [\bA^1_K/\bG_{m,K}] \to \cM$ of $x$ we have $$\wt(f^*\cL) <0$$
and
\item[\rm (2)] $\dim_K(\Aut_{\cM}(x)) =0$.
\end{enumerate}}
\end{definition}

\begin{remark}\label{Rem:L-stable}
{\rm \begin{enumerate}
\item[]
\item[\rm (1)] We can also introduce the notion of {\em $\cL$-semistable} points, by requiring only $\leq$ in (1) and dropping condition (2). %However, a useful geometric construction of coarse moduli spaces for semistable points still needs to be found.
%\item The above notion is only useful if a stack contains many very close degenerations. This is similar to the situation in GIT, where general results on the existence of quotients are only available either for projective schemes or for affine schemes. %In particular we will see shortly, that the notion is only reasonable, if there exist sufficiently many points $x\in \cM(\kbar)$ for which $\Aut_{\cM}(x)$ admits $\bG_m$ as a subgroup. This is one of the key points of this approach: Stability is a condition which is checked around points that have large automorphism groups, so even if one is only interested in stable points, one should not restrict to substacks where the automorphism groups are finite from the beginning.
\item[\rm (2)] The notion admits several natural extensions: Since the weight of line bundles extends to a elements of the groups $$\Pic(B\bG_m)\tensor_{\bZ} \bR\cong \bR,$$ the above definition can also be applied if $\cL\in
\Pic(\cM)\tensor_{\bZ}\bR$. This is often convenient for classical
notions of stability that depend on real parameters.

In \cite{DHL} Halpern-Leinster uses a cohomology class $\alpha \in
H^2(\cM_{\kbar},\bQ_\ell)$ instead of a line bundle. For
$\alpha=c_1(\cL)$ this gives the same condition. As the condition
given above is a numerical one, this is sometimes more convenient
and we will refer to it as $\alpha$-stability.
\item[\rm (3)] As the above condition is numerical and $H^{2d}([\bA^1/\bG_m],\bQ_\ell)=\bQ_\ell$ for all $d>0$
one can generalize the notion further.
\item[\rm (4)] For the above definition to be nonempty it is convenient to assume that the automorphism groups of generic objects are finite, so that (2) can be fulfilled. This appears to rule out the example of vector bundles on curves, but fortunately there is a quite general method available to rigidify the problem, i.e., to divide out generic automorphism groups (\cite[Theorem 5.1.5]{ACV},\cite[Appendix C]{AGV}).
\item[\rm (5)] General representability results of \cite[Theorem 1.6]{HLP}  imply that under our conditions on $\cM$ the stack of morphisms $\Mor([\bA^1/\bG_m],\cM)$ is again an algebraic stack, locally of finite type. As weights of $\bG_m$-actions on line bundles are locally constant in families this implies that $\cL$-stability is preserved under extension of algebraically closed fields $L/K$. In particular, if we define an $S$-valued point $\cM(S)$ to be $\cL$-stable if the corresponding objects are stable for all geometric points of $S$, we get a notion that is preserved under pull-back and therefore defines an abstract substack $\cM^s\subset \cM$. We will see that in may cases $\cL$-stability turns out to be an open condition and then $\cM^s$ is again an algebraic stack, but this is not true for arbitrary $\cM,\cL$.
\end{enumerate}}
\end{remark}

\noindent {\bf Notation.}
    Given a line bundle $\cL$ on $\cM$ and $x\in \cM(K)$ we will denote by $\wt_x(\cL)$ the  homomorphism $$\wt_x(\cL) \colon X_*(\Aut_{\cM}(x))=\Hom(\bG_m,\Aut_{\cM}(x)) \to \bZ$$
    which maps $\lambda\colon \bG_m \to \Aut_{\cM}(x)$ to $\wt_{\bG_m}((x,\lambda)^*\cL)$, where
    $(x,\lambda)\colon [\Spec K/\bG_m] \to \cM$ is the morphism defined by $x$ and $\lambda$.

\smallskip

\begin{example} {\rm A toy example illustrating the criterion is given by the anti-diagonal action $\bG_m\times \bA^2 \to \bA^2$ defined as $t.(x,y)\coloneq (tx,t^{-1}y)$.
The only fixed point of this action is the origin $0$. The quotient
$(\bA^2-\{0\})/\bG_m $ is the affine line with a doubled origin, the
first example of a non-separated scheme.

Since the latter space is a scheme none of its points admits very
close degenerations. This changes if we look at the full quotient
$[\bA^2/\bG_m]$, which contains the additional point
$[(0,0)/\bG_m]$. The inclusions of the coordinate axes
$\iota_{x},\iota_y \colon \bA^1 \to \bA^2$ define very close
degenerations of the points $(0,1)$ and $(1,0)$ and it will turn out
(Lemma \ref{vcd_quotientstacks}) that these constitute essentially
the only very close degenerations in this stack.

A line bundle on $[\bA^2/\bG_m]$ is an equivariant line bundle on
$\bA^2$. Since line bundles on $\bA^2$ are trivial, all equivariant
line bundles are given by a character $()^d\colon \bG_m \to \bG_m$
and we find $\wt_{0}\colon \Pic([\bA^2/\bG_m])\cong \bZ$. Moreover
for the corresponding line bundle $\cL_d$ the weights
 $\wt(\iota_x^*\cL_d) = d, \wt(\iota_y^*\cL_d) = -d$ so for each $d\neq 0$ only one of the points $(1,0)$ and $(0,1)$ can be $\cL_d$-stable.

For $d=0$ the points $(1,0),(0,0),(0,1)$ are all semistable and
these points would be identified in the GIT quotient.}
\end{example}

%%%%
\subsection{Determining very close degenerations}
%%%%
To apply the definition of $\cL$-stability, one needs to classify
all very close degenerations. The next lemma shows that these can be
described by deformation theory of objects $x$ that admit
non-constant morphisms $\bG_m \to \Aut_{\cM}(x)$. Let us fix our
notation for formal discs:
$$ \bD := \Spec k\pbl t\pbr, \mathring{\bD}:= \Spec k(\!(t)\!).$$

\begin{lemma}\label{Lem:vcdeformation} Let $\cM$ be an algebraic stack locally of finite type over $k=\kbar$ with quasi-affine diagonal.
\begin{enumerate}
\item[\rm (1)] For any very close degeneration $f\colon [\bA^1/\bG_m] \to \cM$ the induced morphism
$$ \lambda_f \colon \bG_m =\Aut_{[\bA^1/\bG_m]}(0) \to \Aut_{\cM}(f(0))$$
is nontrivial.
\item[\rm (2)] The restriction functor $\cM([\bA^1/\bG_m]) \to \varprojlim \cM([\Spec(k[x]/x^n)/\bG_m])$ is an equivalence of categories.
\end{enumerate}
\end{lemma}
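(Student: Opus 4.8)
The plan is to treat the two assertions in the reverse order, since (1) will follow from the description of morphisms furnished by (2) together with a weight computation.

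For (2), the starting observation is that $[\Spec(k[x]/x^n)/\bG_m]$ is exactly the $n$-th infinitesimal thickening of the closed substack $B\bG_m=[0/\bG_m]\hookrightarrow[\bA^1/\bG_m]$, so the assertion is that $[\bA^1/\bG_m]$ is \emph{coherently complete} along $B\bG_m$ and that this completeness propagates from sheaves to $\cM$-valued points. I would argue in two steps. First, invoke Tannaka duality for algebraic stacks with quasi-affine diagonal (as in \cite{HLP}), which identifies $\cM(\cX)$, for $\cX$ either $[\bA^1/\bG_m]$ or $\cM_n:=[\Spec(k[x]/x^n)/\bG_m]$, with the groupoid of cocontinuous symmetric monoidal functors out of $\mathrm{QCoh}(\cM)$; this reduces the claim to showing that pullback induces an equivalence on coherent sheaves $\Coh([\bA^1/\bG_m])\xrightarrow{\sim}\varprojlim_n\Coh(\cM_n)$. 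Second, verify this equivalence by hand using the grading.

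Here quasi-coherent sheaves on $[\bA^1/\bG_m]$ are $\bZ$-graded $k[x]$-modules with $\deg x=1$, coherent sheaves being the finitely generated ones, which are therefore bounded below in weight; similarly $\Coh(\cM_n)$ consists of graded finitely generated $k[x]/x^n$-modules, and the restriction functor is $M\mapsto(M/x^nM)_n$. I would build the inverse by sending a compatible system $(M_n)_n$ to the graded module whose weight-$d$ piece is $\varprojlim_n(M_n)_d$: since each module is bounded below and $x$ raises weight by one, one computes $(M/x^nM)_d=M_d$ once $n$ exceeds $d$ minus the lower bound, so the transition maps are isomorphisms for $n\gg d$ and the inverse limit stabilizes degreewise, recovering $M$. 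Checking that these two functors are mutually inverse is the degreewise bookkeeping that makes coherent completeness hold here, and is the technical heart of the lemma.

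For (1), I argue by contrapositive: assuming $\lambda_f$ trivial, I show $f(0)\cong f(1)$, so $f$ is not a very close degeneration. Conceptually, under the identification of $\cM([\bA^1/\bG_m])$ with filtered objects of $\cM$ — for which $f(1)$ is the underlying object, $f(0)$ the associated graded $\gr$, and $\lambda_f$ precisely the grading — triviality of $\lambda_f$ means $\gr$ is concentrated in weight $0$, hence the filtration is trivial and $f$ is the constant degeneration. To keep this self-contained I would instead run it through (2): triviality of $\lambda_f$ says $f|_{B\bG_m}$ factors through $\Spec k\xrightarrow{f(0)}\cM$, and the $\bG_m$-action on $f(0)$, hence on the tangent complex of $\cM$ at $f(0)$, is trivial, so that complex is concentrated in weight $0$. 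The deformation and obstruction groups controlling an extension of $f_n\colon\cM_n\to\cM$ across the weight-$n$ ideal $\ker(\cM_{n+1}\to\cM_n)$ are extracted from nonzero-weight graded pieces of this complex, which therefore vanish for every $n\geq 1$; by induction each $f_n$ equals the constant extension of $f(0)$ pulled back along the retraction $\cM_n\to B\bG_m$. By (2) the system $(f_n)_n$ determines $f$, so $f$ is constant and $f(0)\cong f(1)$.

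The main obstacle I anticipate lies in (2): confirming that the hypotheses on $\cM$ (locally of finite type, quasi-affine diagonal) are exactly those needed to apply Tannaka duality, and correctly passing from the coherent-sheaf equivalence to the statement for $\cM$-points — in particular the interaction of the inverse limit with the monoidal structure, together with the boundedness-of-weights input that drives the degreewise stabilization. The weight computation feeding (1) is then essentially formal.
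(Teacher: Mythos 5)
Your proposal is correct, and for part (2) it takes the same route as the paper: the paper does not prove (2) itself but cites Alper--Hall--Rydh and Bhatt--Halpern-Leistner, whose arguments are exactly your combination of Tannaka duality for stacks with quasi-affine diagonal and coherent completeness of $[\bA^1/\bG_m]$ along $B\bG_m$, checked degreewise on finitely generated graded $k[x]$-modules with weights bounded below; the paper's own commentary (the grading ``allows to pass from power series to polynomials'') is precisely your stabilization argument. For part (1), however, your route genuinely differs from the paper's. The paper chooses a smooth presentation $p\colon X \to \cM$, uses triviality of $\lambda_f$ to lift $f|_{[0/\bG_m]}$ to $X$, lifts inductively over the thickenings using smoothness of $p$ (the same weight-and-$H^1(B\bG_m,-)=0$ mechanism it employs in Lemma \ref{equivG}), and then invokes (2) to reduce to the case where $\cM$ is a scheme, where $f(1)$ is a closed point and $f(0)$ lies in its closure, so no very close degeneration can exist. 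You instead run equivariant deformation theory directly on $\cM$: triviality of $\lambda_f$ makes $f_0^*L_{\cM}$ pure of weight $0$ while the ideal $(x^n)/(x^{n+1})$ has pure weight $n \neq 0$, so the $\Ext$-groups controlling extensions vanish, each $f_n$ is the constant map, and (2) forces $f$ itself to be constant. Your version is more intrinsic and yields the slightly sharper conclusion that a degeneration with trivial $\lambda_f$ is isomorphic to the constant one, at the cost of invoking the cotangent complex and obstruction theory for non-representable morphisms out of $[\Spec(k[x]/x^n)/\bG_m]$ (harmless here by linear reductivity of $\bG_m$, but it should be said); the paper's chart argument needs only infinitesimal lifting along the smooth $p$. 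Two details you should still nail down in (2): for your inverse functor you need a uniform lower weight bound across a compatible system $(M_n)$, which follows by graded Nakayama since generators lift from $M_1 = M_n/xM_n$; and since $\cM$ is only locally of finite type, Tannaka duality should be applied after factoring through a quasi-compact open substack, which is legitimate because $f(0)$ is a specialization of $f(1)$, so any open containing the closed point of the formal system contains the whole image.
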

\begin{proof}
For part (2) elegant proofs have been given independently by Alper,
Hall and Rydh \cite[Corollary 3.6]{AlperHallRydh} and Bhatt,
Halpern-Leistner \cite[Remark 8.3]{BHL}. Both proofs rely on Tannaka
duality, one uses an underived version the other a derived version.
As the statement produces the point $f(1)$ out of a formal datum let
us explain  briefly why this is possible: The composition
$$\phi\colon \bD =\Spec k\pbl  x\pbr  \to \bA^1 \to [\bA^1/\bG_m]$$
is faithfully flat, because both morphisms are flat and the map is
surjective, because both points $1,0$ of $[\bA^1/\bG_m]$ are in the
image.

By our assumptions $\cM$ is a stack for the fpqc topology
(\cite[Corollaire 10.7]{LMB}) we therefore see that
$\cM([\bA^1/\bG_m])$ can be described as objects in $\cM(k\pbl
x\pbr)$ together with a descent datum with respect to $\phi$.

Moreover, the canonical map $\cM(\bD) \map{\cong} \varprojlim
\cM(k[x]/(x^n))$ is an equivalence of categories: This follows for
example, because the statement holds for schemes and choosing a
smooth presentation $X \to \cM$ one can reduce to this statement,
\cite[Tag 07X8]{Stacks}.

In particular this explains already that an element of $ \varprojlim
\cM([\Spec(k[x]/x^n)/\bG_m])$ will produce a $k\pbl x\pbr$-point of
$\cM$. The problem now lies in constructing a descent datum for this
morphism, as $$\bD  \times_{[\bA^1/\bG_m]} \bD = \Spec (k\pbl x\pbr
\tensor_{k[x]} k[x,t,t^{-1}] \tensor_{k[y]} k\pbl y\pbr )$$ where
the last tensor product is taken via $y=xt$. The ring on the right
hand side is not complete and the formal descent data coming form an
element in $\varprojlim \cM([\Spec(k[x]/x^n)/\bG_m])$ only seems to
induce a descent datum on the completion of the above ring.

Here the Tannakian argument greatly simplifies the problem, as it
gives a concise way to capture the information that a $\bG_m$ action
induces a grading and therefore allows to pass from power series to
polynomials.

Let us deduce (1). First note that this holds automatically if $\cM$
is a scheme, because then $f(1)$ is a closed point and $f(0)$ lies
in the closure of $f(1)$.

In general choose a smooth presentation $p\colon X \to \cM$. If
$\lambda_f$ is trivial, we can lift the morphism $f|_{0}\colon
[0/\bG_m] \to \Spec k \to \cM$ to $\tilde{f}_0 \colon [0/\bG_m] \to
X$. Since $p$ is smooth, we can inductively lift this morphism to
obtain an element in $ \varprojlim X([\Spec(k[x]/x^n)/\bG_m])$. Thus
we reduced (1) to the case $\cM=X$.
\hfill $\Box$
\end{proof}

Using this lemma, we can compare $\cL$-stability to classical
notions of stability, simply by first identifying objects for which
the automorphism group contains $\bG_m$ and then studying their
deformations. The next subsections illustrate this procedure in
examples.
%%%%
\subsection{The example of GIT-quotients}\label{Sec:GIT-quot}
%%%%
Let $X$ be a projective variety equipped with the action of a
reductive group $G$ and a $G$-linearized line bundle $\cL$. Again
bundles on the quotient stack $[X/G]$ are the same as equivariant
bundles on $X$, so we will  alternatively view $\cL$ as a line
bundle on $[X/G]$.

Let us fix some standard notation: Given $\lambda \colon \bG_m \to
G$ we will denote by $P_\lambda$ the corresponding parabolic
subgroup, $U_\lambda$ its unipotent radical and $L_\lambda$ the
corresponding Levi subgroup, i.e.,
\begin{align*}
P_\lambda(R) &= \{ g\in G(R) | \lim_{t\to 0} \lambda(t)g\lambda(t^{-1}) \text{ exists}\},\\
U_\lambda(R) &= \{ g\in G(R) | \lim_{t\to 0} \lambda(t)g\lambda(t^{-1})=1 \},\\
L_\lambda(R)&= \{ g\in G(R) | \lambda(t)g\lambda(t^{-1}) =g \}\text{
i.e., } L_\lambda=\Centr_G(\lambda).
\end{align*}

To compare $\cL$-stability on $[X/G]$ to GIT-stability on $X$ we
first observe that the test objects appearing in the conditions
coincide:
\begin{lemma}\label{vcd_quotientstacks}
For any cocharacter $\lambda\colon \bG_m \to G$ and any geometric
point $x\in X(K)$ that is not a fixed point of $\lambda$ the
equivariant map $f_{\lambda,x}\colon \bA^1_K \to X$ defines a very
close degeneration $\overline{f}_{\lambda,x}\colon
[\bA^1_K/\bG_{m,K}] \to [X/G]$. Moreover, any very close
degeneration in the stack  $[X/G]$ is of the form
$\overline{f}_{\lambda,x}$ for some $x,\lambda$.
\end{lemma}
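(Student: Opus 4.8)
The plan is to prove the two assertions separately: first build the morphism $\overline f_{\lambda,x}$ by hand, then recover a pair $(\lambda,x)$ from an arbitrary very close degeneration by trivializing an equivariant torsor.

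\emph{Construction of $\overline f_{\lambda,x}$.} From $\lambda$ and $x$ I would form the orbit morphism $\bG_m\to X$, $t\mapsto\lambda(t)\cdot x$. As $X$ is proper and $\bA^1$ a smooth curve, the valuative criterion extends it uniquely to $f_{\lambda,x}\colon\bA^1_K\to X$. Letting $\bG_m$ scale $\bA^1$ and act on $X$ through $\lambda$, the relation $f_{\lambda,x}(sa)=\lambda(s)f_{\lambda,x}(a)$ holds on the dense open $\bG_m\subset\bA^1$, hence on all of $\bA^1$ since $X$ is separated; thus $f_{\lambda,x}$ is equivariant for the homomorphism $\lambda\colon\bG_m\to G$. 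A group homomorphism together with an equivariant map of spaces induces a morphism of quotient stacks, so one obtains $\overline f_{\lambda,x}\colon[\bA^1_K/\bG_{m,K}]\to[X/G]$ with $\overline f_{\lambda,x}(1)=[x]$ and $\overline f_{\lambda,x}(0)=[\lim_{t\to0}\lambda(t)x]$. Since $x$ is not $\lambda$-fixed the map $f_{\lambda,x}$ is non-constant, and $\overline f_{\lambda,x}$ is genuinely a very close degeneration precisely when this limit leaves the orbit $G\cdot x$; the converse below will show that no other morphisms occur.

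\emph{Reducing the converse to a torsor computation.} Let $g\colon[\bA^1_K/\bG_{m,K}]\to[X/G]$ be any very close degeneration. Unwinding the definition of the quotient stack and pulling back along $\bA^1\to[\bA^1/\bG_m]$, the datum $g$ is the same as a $\bG_m$-equivariant $G$-torsor $P$ on $\bA^1_K$ together with a $\bG_m$- and $G$-equivariant morphism $P\to X$. Everything then reduces to classifying $\bG_m$-equivariant $G$-torsors on $\bA^1$ for the scaling action, and I claim each such $P$ is the pullback of its fibre $P_0$ over the fixed point $0$, while $P_0$ --- a $G$-torsor over a point carrying a $\bG_m$-action --- is the trivial torsor equipped with a cocharacter $\lambda\colon\bG_m\to G$ (a $\bG_m$-action on $G$ commuting with right translation is left translation by a homomorphism). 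This is the heart of the argument and the step I expect to be the main obstacle.

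\emph{The key step.} For $G=\GL_n$ the claim is exactly that a $\bG_m$-equivariant vector bundle on $\bA^1=\Spec k[x]$ is a graded projective $k[x]$-module, hence a sum of shifts $\bigoplus_i k[x](d_i)$, each of which is manifestly pulled back from its weight-$d_i$ fibre at $0$. For general affine $G$ I would argue through Tannaka duality: such a torsor is an exact tensor functor from $\mathrm{Rep}(G)$ into $\Coh([\bA^1/\bG_m])$, i.e.\ into graded $k[x]$-modules, landing in vector bundles; composing with the fibre functor at $0$ gives an exact tensor functor $\mathrm{Rep}(G)\to\mathrm{Rep}(\bG_m)$, that is a cocharacter $\lambda$, and the bundle computation shows the original functor is the pullback of this one. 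Hence $P\cong\bA^1\times G$ with $\bG_m$ acting by $s\cdot(a,h)=(sa,\lambda(s)h)$.

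\emph{Conclusion.} With $P$ trivialized, the equivariant map $P\to X$ becomes a morphism $\phi\colon\bA^1\to X$ satisfying $\phi(sa)=\lambda(s)\phi(a)$. Putting $x:=\phi(1)$, equivariance together with uniqueness of the extension identifies $\phi$ with $f_{\lambda,x}$, so $g=\overline f_{\lambda,x}$ and $g(0)=[\lim_{t\to0}\lambda(t)x]$. Finally, $g$ being a very close degeneration forces $g(0)\not\cong g(1)$, so $\phi$ is non-constant and $x$ is not a fixed point of $\lambda$; consistently, Lemma \ref{Lem:vcdeformation}(1) identifies $\lambda$ with the nontrivial homomorphism $\lambda_g$, completing the proof.
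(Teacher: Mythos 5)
Your overall architecture is the paper's: you pull the $G$-torsor $X\to[X/G]$ back along the given degeneration and reduce everything to classifying $\bG_m$-equivariant $G$-torsors on $\bA^1$, which is exactly Lemma \ref{equivG} in the text. But at that key step there is a genuine gap, and it sits precisely where you predicted. Knowing that each value $F(V)$ of the tensor functor is a graded free $k[x]$-module, hence \emph{abstractly} isomorphic to the pullback of its fibre at $0$, does not show that $F$ is isomorphic to $\pi^*0^*F$ \emph{as a tensor functor}: the isomorphisms $F(V)\cong \bigl(F(V)/xF(V)\bigr)\tensor_k k[x]$ are non-canonical, and the assignment $M\mapsto (M/xM)\tensor_k k[x]$ is not even naturally isomorphic to the identity on graded free modules --- it sends the nonzero map $k[x]\map{x} k[x](1)$ (shift of grading) to the zero map. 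So ``the bundle computation shows the original functor is the pullback of this one'' is a non sequitur; what you need is an isomorphism of torsors, i.e.\ a monoidal natural isomorphism, and producing it is the actual content. Via the Rees dictionary, graded $k[x]$-modules are filtered vector spaces, and the required statement is exactly the splitting theorem for filtered fibre functors (Saavedra/Deligne--Milne; in full generality a theorem of Ziegler) --- a real theorem, not a formal consequence of objectwise freeness. The paper is aware of this: it remarks that the general case ``can be deduced from [the vector bundle case] using the Tannaka formalism,'' but then deliberately proves Lemma \ref{equivG} by a different route, constructing the reduction to $P_\lambda$ and a $\bG_m$-equivariant section by killing obstructions in $H^1(B\bG_m,-)$ (which vanish because the relevant weights are negative), algebraizing the formal section via Lemma \ref{Lem:vcdeformation}(2), with the Bia\l ynicki-Birula/Hesselink argument as an explicit alternative. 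Citing the splitting theorem, or reproducing one of these deformation arguments, would close your hole; as written, the heart of the proof is missing.

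One smaller point: your hedge on the forward direction (that $\overline{f}_{\lambda,x}$ is a very close degeneration ``precisely when the limit leaves the orbit $G\cdot x$'') is in fact more careful than the paper, whose one-line justification --- $f_{\lambda,x}(0)$ is $\lambda$-fixed and $x$ is not --- glosses over the fact that an isomorphism in $[X/G]$ may be implemented by an arbitrary $g\in G$ (for $\GL_2$ acting on $\bP^1$ the limit of $\diag(t,1)$ acting on $[1{:}1]$ stays in the orbit, so the assertion needs exactly your caveat, or the finite-stabilizer hypothesis that is anyway in force wherever the lemma is applied). But having flagged this, you then defer to the converse and never actually establish the first assertion in any form, so that half of the lemma is also left open in your write-up; a one-line argument via conjugation of stabilizers (a $\lambda$-fixed limit has $\bG_m$ in its automorphism group, which is impossible in the orbit of a point with finite stabilizer) would make your caveat do honest work.
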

\begin{proof}
Since $f_{\lambda,x}(0)$ is a fixed point of $\lambda$ and $x$ is
not, we have $\overline{f}(0)\not\cong \overline{f}(1)$, thus
$\overline{f}$ is a very close degeneration.

Conversely let $f\colon [\bA^1/\bG_m] \to [X/G]$ be any very close
degeneration. We need to find a $\bG_m$ equivariant morphism
$$\xymatrix{
{\bA^1}\ar[d]\ar@{-->}[r]^{\tilde{f}} & X\ar[d]^\pi \\
     [\bA^1/\bG_m]\ar[r]^{f} & [X/G].
}$$ Since $\pi\colon X \to [X/G]$ is a $G$-bundle, the pull-back
$p\colon X \times_{[X/G]} [\bA^1/\bG_m] \to [\bA^1/\bG_m]$ is a
$G$-bundle on $[\bA^1/\bG_m]$. To find $\tilde{f}$ is equivalent to
finding a $\bG_m$-equivariant section of this bundle. This will
follow from the known classification of $\bG_m$-equivariant
$G$-bundles on the affine line:
\begin{lemma}[$G$-bundles on ${[\bA^1/\bG_m]}$]\label{equivG}
Let $G$ be a reductive group and $\cP$ a $G$-bundle on
$[\bA^1/\bG_m]$. Denote by $\cP_0$ the fiber of $\cP$ over $0\in
\bA^1$.
\begin{enumerate}
    \item[\rm (1)] If there exists $x_0\in \cP_0(k)$ (e.g. this holds if $k=\kbar$). Then there exists a cocharacter $\lambda \colon \bG_m \to G$, unique up to conjugation and an isomorphism of $G$-bundles $$\cP \cong [\big(\bA^1 \times G\big)/\big(\bG_m,(\act,\lambda)\big)].$$
    Moreover, $\cP$ has a canonical reduction to $P_\lambda$.
    \item[\rm (2)] Let $G_0\coloneq \Aut_G(P_0)$ and $\lambda \colon \bG_m \to G_0$ the cocharacter  defined by $\cP|_{[0/\bG_m]}$. Consider the $G_0$ bundle  $\cP^{G_0}\coloneq \Isom_G(\cP,\cP_0)$ on $[\bA^1/\bG_m]$. Then $$\cP^{G_0} \cong [\big(\bA^1 \times G_0\big) /\big(\bG_m,(\act,\lambda)\big)],$$ i.e.
    $\cP \cong \Isom_{G_0}([\big(\bA^1 \times G_0\big)/\big(\bG_m,(\act,\lambda)\big)],\cP_0)$. Moreover, $\cP^{G_0}$ has a canonical reduction to  $P_{0,\lambda}\subset G_0$.
\end{enumerate}
\end{lemma}
For vector bundles this result is \cite[Theorem 1.1]{Asok} where
some history is given. The general case can be deduced from this
using the Tannaka formalism. As we will need a variant of the
statement later, we give a slightly different argument.
\begin{proof}[Proof of Lemma \ref{equivG}]
    The second part follows from the first, as the $G_0$-bundle $\cP^{G_0}_0=\Isom_G(\cP_0,\cP_0)$ has a canonical point $\id$. We added (2), because it gives an intrinsic statement, independent of choices.

    To prove (1) note that $x_0$ defines an isomorphism $G \map{\cong} \Aut_G(P_0)$ and a section $[\Spec k/\bG_m]\to \cP|_{[0/\bG_m]}$. This induces a section $[\Spec k/\bG_m]\to \cP/P_{\lambda}$. As the map $\pi\colon \cP/P_{\lambda}\to [\bA^1/\bG_m]$ is smooth, any section can be lifted infinitesimally to $\Spec k[x]/x^n$ for all $n\geq 0$. Inductively the obstruction to the existence of a $\bG_m$-equivariant section is an element in $H^1(B\bG_m, T_{(\cP/P_{\lambda})/\bA^1,0}\tensor (x^{n-1})/(x^n)) = 0$ and the choices of such liftings form a torsor under $H^0(B\bG_m,T_{ \cP/P_{\lambda}/\bA^1,0} \tensor (x^{n-1})/(x^n))$. Now by construction $\bG_m$ acts with negative weight on $T_{ \cP/P_{\lambda},0}= \Lie(G)/\Lie(P_{\lambda})$ and it also acts with negative weight on the cotangent space $(x)/(x^2)$, so there exists a canonical $\bG_m$ equivariant reduction $\cP_\lambda$ of $\cP$ to $P_\lambda$.

    Similarly, the vanishing of $H^1$ implies that we can also find a compatible family of $\lambda$-equivariant sections $[(\Spec k[t]/t^n)/\bG_m] \to \cP$ and by Lemma \ref{Lem:vcdeformation} this defines a section over $[\bA^1/\bG_m]$, i.e. a morphism of $G$-bundles $[(\bA^1 \times G)/\bG_m,\lambda] \to \cP$.

    Here, we could alternatively see this explicitly as follows: Let us consider $\pi\colon\cP_\lambda \to \bA^1$ as $\bG_m$ equivariant $P_\lambda$ bundle on $\bA^1$ and consider the twisted action
    $$\star\colon \bG_m \times \cP_{\lambda} \to \cP_{\lambda}$$ given by $t \star p \coloneq (t.p) \cdot \lambda(t^{-1})$. Our point $x_0$ is a fixed point for this action by construction, as we used it to identify $P_0$ with $G$ and $e\in G$ is a fixed point for the conjugation $\lambda(t)\cdot \un{\quad} \cdot \lambda(t^{-1})$. Moreover $\lambda$ acts with non-negative weights on $\Lie(P_\lambda)$ and also on $T_{\bA^1} =((x)/(x^2))^{\vee}$. Therefore Bia\l ynicki-Birula decomposition \cite{Hesselink} implies that there exists a point $x_1\in \pi^{-1}(1)$ such that $\lim_{t\to 0} t\star x_1 = x_0$. This defines a $\lambda$-equivariant section $\bA^1 \to \cP$.
\hfill $\Box$
\end{proof}
This also completes the proof of Lemma \ref{vcd_quotientstacks}.
\hfill $\Box$
\end{proof}

\begin{proposition}\label{Prop:GITstability}
Let $X,G,\cL$ be a projective $G$-scheme together with a
$G$-linearized line bundle $\cL$. A point $x\in X(\kbar)$ is
GIT-stable with respect to $\cL$ if and only if the induced point
$\overline{x} \in [X/G](\kbar)$ is $\cL$-stable.
\end{proposition}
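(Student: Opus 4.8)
The plan is to verify the two defining conditions of $\cL$-stability in Definition \ref{Def:Lstab} against the two clauses of Mumford's criterion separately, letting Lemma \ref{vcd_quotientstacks} carry the geometric content. Condition (2) is immediate: since $\pi\colon X\to[X/G]$ is a $G$-torsor, there is a canonical identification $\Aut_{[X/G]}(\overline{x})=\Stab_G(x)$ as closed subgroup schemes of $G$, and over $\kbar$ such a group has $\dim = 0$ exactly when it is finite. Thus $\dim_{\kbar}\Aut_{[X/G]}(\overline{x})=0$ is literally the requirement that the stabilizer of $x$ be finite, which is the first clause of GIT-stability.

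For condition (1) I would first use Lemma \ref{vcd_quotientstacks} to bring every very close degeneration of $\overline{x}$ into standard form. The lemma says any very close degeneration of $[X/G]$ is $\overline{f}_{\lambda,x'}$ for some cocharacter $\lambda\colon\bG_m\to G$ and some $x'$ not fixed by $\lambda$; that it degenerates $\overline{x}$ means $\overline{x'}=\overline{f}_{\lambda,x'}(1)\cong\overline{x}$, i.e. $x'\in Gx$. The key simplification is that I may take $x'=x$: writing $x'=gx$ and letting $m_g\colon X\to X$ denote the action of $g$, the composite $\pi\circ m_g$ is canonically $2$-isomorphic to $\pi$, so $\overline{f}_{\lambda,gx}$ and $\overline{f}_{g^{-1}\lambda g,\,x}$ are $2$-isomorphic morphisms $[\bA^1/\bG_m]\to[X/G]$; replacing $\lambda$ by $g^{-1}\lambda g$ (which fixes $x$ iff $\lambda$ fixes $gx$) I may assume the degeneration is $\overline{f}_{\lambda,x}$ with $\lambda$ not fixing $x$. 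Conversely every such $\overline{f}_{\lambda,x}$ is a very close degeneration of $\overline{x}$ by the first part of Lemma \ref{vcd_quotientstacks}. This $2$-isomorphism step is where the $G$-linearization of $\cL$ is really used, and it is the only slightly delicate point of the argument.

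It then remains to compare the weights. By the identification of $\bG_m$-equivariant line bundles on $\bA^1$ with line bundles on $[\bA^1/\bG_m]$, the bundle $\overline{f}_{\lambda,x}^*\cL$ is exactly the equivariant bundle $f_{\lambda,x}^*\cL$, so $\wt(\overline{f}_{\lambda,x}^*\cL)=\wt(f_{\lambda,x}^*\cL)=-\mu^{\cL}(x,\lambda)$ by the very definition of $\mu^{\cL}$. Hence condition (1)---that $\wt(f^*\cL)<0$ for all very close degenerations $f$ of $\overline{x}$---is equivalent to $\mu^{\cL}(x,\lambda)>0$ for all $\lambda$ not fixing $x$, which (given the finiteness of the stabilizer already secured by condition (2), so that every nontrivial $\lambda$ moves $x$) is precisely Mumford's second clause. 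Combining the two comparisons yields the asserted equivalence. The only genuine obstacle is packaged into Lemma \ref{vcd_quotientstacks}; once the classification of very close degenerations is granted, the proposition reduces to tracking weights and a single conjugation.
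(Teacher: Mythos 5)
Your proof is correct and follows essentially the same route as the paper: both rest on Lemma \ref{vcd_quotientstacks} to identify very close degenerations in $[X/G]$ with the maps $\overline{f}_{\lambda,x}$, then compare weights via $\wt(\overline{f}_{\lambda,x}^*\cL)=\wt(f_{\lambda,x}^*\cL)=-\mu^{\cL}(x,\lambda)$. The details you add beyond the paper's terse argument --- the identification $\Aut_{[X/G]}(\overline{x})=\Stab_G(x)$ for condition (2), and the conjugation $\overline{f}_{\lambda,gx}\cong\overline{f}_{g^{-1}\lambda g,\,x}$ reducing an arbitrary point of the orbit to $x$ itself --- are correct elaborations of steps the paper leaves implicit.
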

\begin{proof}
As $X$ is projective, given $x\in X(\kbar)$ and a one parameter
subgroup $\lambda \colon \bG_m \to G$ we obtain an equivariant map
$f_{\lambda,x}\colon  \bA^1 \to X$ and thus a morphism $\overline{f}
\colon [\bA^1/\bG_m] \to X$. By Lemma \ref{vcd_quotientstacks} all
very close degenerations arise in this way. As
$\wt(\overline{f}^*\cL)= \wt(f^*\cL)$ we therefore find that $x$
satisfies the Hilbert-Mumford criterion for stability if and only if
it is $\cL$-stable.
\hfill $\Box$
\end{proof}

%%%%
\subsection{Stability of vector bundles on curves}
%%%%
We want to show how the classical notion of stability for
$G$-bundles arises as $\cL$-stability. For the sake of clarity we
include the case of vector bundles first. Let $C$ be a smooth,
projective, geometrically connected curve over $k$ and denote by
$\Bun_n^d$ the stack of vector bundles of rank $n$ and degree $d$ on
$C$.

\subsubsection{The line bundle}
A natural line bundle on $\Bun_n^d$ is given by the determinant of
cohomology $\cL_{\det}$, i.e., for any vector bundle $\cE$ on $C$ we
have $\cL_{\det,\cE}:= \det(H^*(C,\cEnd(\cE)))^{-1},$ and more
generally for any $f\colon T\to \Bun_n^d$ corresponding to family
$\cE$ on $C\times T$ one defines
$$f^*\cL_{\det} := \big(\det \bR \pr_{T,*} \cEnd(\cE)\big)^\vee.$$

\begin{remark}\label{rigidify}
{\rm    Since any vector bundle admits $\bG_m$ as central automorphisms, to apply our criterion we need to pass to the rigidified stack $\overline{\Bun}_n^d:=\Bun_n^d\!\!\fatslash\, \bG_m$ obtained by dividing all automorphism groups by $\bG_m$ (\cite[Theorem 5.1.5]{ACV}). To obtain a line bundle on this stack, we need a line bundle on $\Bun_n^d$ on which the central $\bG_m$-automorphisms act trivially. This is the reason why we use the determinant of $H^*(C, \cEnd(\cE))$ instead of $H^*(C,\cE)$. It is known that up to multiples and bundles pulled back from the Picard variety this is the only such line bundle on $\Bun_n^d$ (see e.g. \cite{BiswasHoffmann} which also gives some history on the Picard group of $\Bun_n^d$).}
\end{remark}

\subsubsection{Very close degenerations of bundles}
To classify maps $f\colon [\bA^1/\bG_m] \to \Bun_n^d$ we have as in
Lemma \ref{equivG}.

\begin{lemma}
There is an equivalence
$$ \Map([\bA^1/\bG_m],\Bun_n^d) \cong \langle (\cE,\cE^i)_{i\in\bZ} | \cE^{i} \subset \cE^{i-1} \subset \cE, \cup \cE^i = \cE, \cE^i=0 \text{ for } i\gg 0 \rangle,$$
i.e. giving a vector bundle, together with a weighted filtration is
equivalent to giving a morphism $[\bA^1/\bG_m] \to \Bun_n^d$.
\end{lemma}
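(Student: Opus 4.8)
The plan is to translate the mapping stack into a statement about $\bG_m$-equivariant bundles and then apply the Rees construction. By definition a morphism $f\colon[\bA^1/\bG_m]\to\Bun_n^d$ is a rank-$n$ degree-$d$ vector bundle on $C\times[\bA^1/\bG_m]$, and since $\bG_m$ acts trivially on $C$ we have $C\times[\bA^1/\bG_m]=[(C\times\bA^1)/\bG_m]$; hence $f$ is the same datum as a $\bG_m$-equivariant vector bundle on $C\times\bA^1$. As $\bA^1=\Spec k[x]$ is affine, I would invoke the standard equivalence between $\bG_m$-equivariant quasi-coherent sheaves on $C\times\bA^1$ and $\bZ$-graded $\cO_C[x]$-modules $M=\bigoplus_{j\in\bZ}M_j$. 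With the sign convention fixed in the definition of $\wt$ the coordinate $x$ is homogeneous of weight $1$, so multiplication by $x$ gives $\cO_C$-linear maps $x\colon M_j\to M_{j+1}$, and $f$ corresponds to such an $M$ that is in addition locally free over $\cO_{C\times\bA^1}$.

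The next step is to write down the two functors realising the equivalence. Starting from an equivariant bundle $M$, inverting $x$ identifies the degree-zero part $\cE:=(M[x^{-1}])_0$ with the fibre $f(1)$ over $1\in\bA^1$, a locally free $\cO_C$-module of rank $n$; the localisation maps $M_{j}\to\cE$, $m\mapsto x^{-j}m$, have images that grow with $j$ because $xM_j\subset M_{j+1}$, so putting $\cE^{i}:=\im(M_{-i}\to\cE)$ yields a decreasing filtration $\cE^{i}\subset\cE^{i-1}\subset\cE$. Conversely, to a filtered bundle $(\cE,\cE^{\bullet})$ I would attach the Rees module $\Rees(\cE,\cE^{\bullet})\subset\cE\tensor_k k[x,x^{-1}]$ whose degree-$j$ piece is $\cE^{-j}$ and on which $x$ acts by the inclusions $\cE^{-j}\hookrightarrow\cE^{-j-1}$. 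These two constructions are visibly inverse on the underlying sheaf data, and I would finish the categorical statement by noting that a $\bG_m$-equivariant morphism of equivariant bundles is precisely a graded $\cO_C[x]$-module homomorphism, which after inverting $x$ becomes an $\cO_C$-linear map $\cE\to\cE'$ carrying each $\cE^i$ into $(\cE')^i$, i.e. a morphism of filtered bundles; this gives full faithfulness.

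The \emph{main obstacle} is to match the local freeness of $M$ with the geometric meaning of ``weighted filtration'', that is, to show it forces the filtration to be by subbundles. Here I would compute the two relevant fibres over $\bA^1$: the fibre at $1$ is $\cE$, while the fibre at $0$ is $M/xM$, which for the ($x$-torsion-free) Rees module is canonically the associated graded $\gr\cE=\bigoplus_i\cE^{i}/\cE^{i+1}$. Since $C\times\bA^1$ is a regular surface, $M$ is locally free exactly when it is flat over $\bA^1$ with fibres of constant rank $n$, and this holds iff each graded piece $\cE^{i}/\cE^{i+1}$ is locally free on $C$, i.e. iff the $\cE^{i}$ are saturated subsheaves (subbundles); this is the condition I would build into the right-hand category. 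The remaining finiteness constraints, $\cE^i=0$ for $i\gg0$ and $\cup_i\cE^i=\cE$, then follow from coherence of $M$, since a finitely generated graded $\cO_C[x]$-module with $x$ in positive degree is bounded below and $x$ becomes an isomorphism in large degree after localisation. Alternatively one could run the argument formally, reducing to $\varprojlim\Bun_n^d([\Spec(k[x]/x^n)/\bG_m])$ via Lemma \ref{Lem:vcdeformation}(2) and organising the $\bG_m$-weights exactly as in the proof of Lemma \ref{equivG}, but the affineness of $\bA^1$ makes the direct Rees description cleaner.
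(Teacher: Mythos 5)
Your proof is correct, but it takes a genuinely different route from the paper. The paper's own argument is split asymmetrically: the direction from filtrations to morphisms is exactly your Rees construction $\Rees(\cE^\bullet)=\oplus_i \cE^i x^{-i}$, but for the converse the paper does \emph{not} work with graded modules; it argues as in Lemma \ref{equivG}, extracting from $f$ the cocharacter $\bG_m\to\Aut_{\Bun_n}(f(0))$, i.e.\ a grading of $f(0)$, and then lifting the associated filtration step by step through the infinitesimal neighborhoods $[\Spec(k[x]/x^n)/\bG_m]$ (the obstructions lie in $H^1(B\bG_m,-)=0$), finally invoking the algebraization statement of Lemma \ref{Lem:vcdeformation}(2) -- a Tannakian input -- to pass from the formal data to an actual morphism from $[\bA^1/\bG_m]$. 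You instead exploit the affineness of $\bA^1$ to identify equivariant sheaves on $C\times\bA^1$ with graded $\cO_C[x]$-modules outright, recover the filtration as $\cE^i=\im(M_{-i}\to(M[x^{-1}])_0)$, and check that $M\cong\Rees(\cE^\bullet)$ using torsion-freeness; your flatness/constant-rank criterion correctly converts local freeness of $M$ into the condition that the graded pieces $\cE^i/\cE^{i+1}$ are locally free, i.e.\ that the $\cE^i$ are subbundles, and coherence gives the boundedness and exhaustiveness constraints. This buys a completely self-contained, elementary proof with no deformation theory or Tannaka duality, and it makes both composites of the equivalence explicit (including full faithfulness, which the paper leaves implicit); what the paper's method buys is uniformity, since the same lifting argument applies verbatim to $G$-bundles and $\cG$-torsors (Lemmas \ref{equivG} and \ref{Lem:ReesG}), where no graded-module description of the mapping stack is available. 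Two cosmetic points: regularity of the surface $C\times\bA^1$ is not actually needed (coherent flat equals locally free, plus the local criterion for flatness, suffices), and the phrase about $x$ becoming an isomorphism in large degree should just say that the generators of $M$ sit in degrees bounded below, so $M_j=0$ for $j\ll 0$ locally on $C$, whence $\cE^i=0$ for $i\gg 0$ by quasi-compactness.
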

\begin{proof}
    We give the reformulation to fix the signs: A vector bundle $\cE$ on $C$ together with a weighted filtration $ \cE^i \subset \cE^{i-1} \subset \dots \subset \cE$ the Rees construction $\Rees(\cE^\bullet):= \oplus_{i\in \bZ} \cE^i x^{-i}$ defines an $\cO_C[x]$ module, i.e. a family on $C\times \bA^1$ which is $\bG_m$ equivariant for the action defined on the coordinate parameter with $\Rees(\cE^\bullet)|_{C \times 0} \cong \gr(\cE^\bullet)$.

    For the converse we argue as in Lemma \ref{equivG}: Any morphism $f\colon[\bA^1/\bG_m] \to \Bun_n^d$ defines a morphism $\bG_m \to \Aut_{\Bun_n}(f(0))$, i.e., a grading on the bundle $f(0)$ such that the corresponding filtration lifts canonically to the family.
\hfill $\Box$
\end{proof}

\subsubsection{Computing the numerical invariants}
Given a very close degeneration $[\bA^1/\bG_m] \to \Bun_n^d$ we can
easily compute $\wt(\cL_{\det}|_{f(0)})$, as follows. We use the
notation of the preceding lemma and write $\cE_i:=\cE^i/(\cE^{i+1})$
so that $f(0)=\oplus \cE_i$. Note that $\bG_m$ acts with weight $-i$
on $\cE_i$. Denoting further
$\mu(\cE_i):=\frac{\deg{\cE_i}}{\rk{\cE_i}}$ we find
\begin{align*}
 \wt(\cL_{\det}|_{f(0)}) &= -\wt_{\bG_m}(\det H^*(C,\oplus \cHom(\cE_i,\cE_j))),
\end{align*}
because $\cL_{det}$ was defined to be the dual of the determinant of
cohomology. As $\bG_m$ acts with weight $(i-j)$ on
$\cHom(\cE_i,\cE_j)$, it acts with the same weight on the cohomology
groups, taking determinants we find the weight $(i-j) \chi(
\cHom(\cE_i,\cE_j))$ on $\det H^*(C,\cHom(\cE_i,\cE_j))$. Thus by
Riemann-Roch we find
\begin{align*}
 \wt(\cL_{\det}|_{f(0)})&= \sum_{i,j} (j-i) \chi( \cHom(\cE_i,\cE_j))\\
 & = \sum_{i,j} (j-i)\big(\rk(\cE_i)\deg(\cE_j) - \rk(\cE_j)\deg(\cE_i) + \rk(\cE_i)\rk(\cE_j) (1-g)\big)\\
 &= 2\sum_{i<j} (j-i) ( \rk(\cE_i)\deg(\cE_j) - \rk(\cE_j)\deg(\cE_i)).
  \end{align*}
  As it is more common to express the condition in terms of the subbundles $\cE^l$ instead of the subquotients $\cE_i$ let us replace the factors $(j-i)$ by a summation over $l$ with $i\leq l <j$:
  \begin{align*}
 \wt(\cL_{\det}|_{f(0)}) &= 2 \sum_l (\sum_{i\leq l} \rk(\cE_i))(\sum_{j>l} \deg(\cE_j)) - (\sum_{i\leq l} \deg(\cE_i))(\sum_{j>l} \rk(\cE_j))\\
 &=  2 \sum_l \rk(\cE^l)(n-\rk(\cE^l)) (\mu(\cE^l) - \mu(\cE/\cE^l)).
\end{align*}
This is $<0$ unless $\mu(\cE^i) > \mu(\cE/\cE^i)$ for some $i$.
Conversely, if $\mu(\cE^i) > \mu(\cE/\cE^i)$ for some $i$, then the
two step filtration $0\subset \cE^i \subset \cE$  defines a very
close degeneration of positive weight. Thus we find the classical
condition:
\begin{lemma}
    A vector bundle $\cE$ is $\cL_{\det}$-stable if and only if for all $\cE^\prime \subset \cE$ we have $$\mu(\cE^\prime)< \mu(\cE).$$
\end{lemma}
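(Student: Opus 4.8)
The plan is to combine the weight formula
$$\wt(\cL_{\det}|_{f(0)}) = 2\sum_l \rk(\cE^l)(n-\rk(\cE^l))\big(\mu(\cE^l)-\mu(\cE/\cE^l)\big)$$
derived above with the classification of very close degenerations, keeping in mind that we work on the rigidified stack $\overline{\Bun}_n^d$ so that \emph{both} conditions of Definition \ref{Def:Lstab} are genuinely in play. Since for a subbundle $\cE^l\subseteq\cE$ the inequality $\mu(\cE^l)<\mu(\cE)$ is equivalent to $\mu(\cE^l)<\mu(\cE/\cE^l)$, I will pass freely between the two formulations. I would also reduce at the outset to saturated subsheaves, since saturating a subsheaf preserves its rank and does not decrease its slope, so it suffices to test the slope inequality on subbundles.

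For the direction ``slope-stable $\Rightarrow\cL_{\det}$-stable'', assume $\mu(\cE')<\mu(\cE)$ for every proper nonzero subbundle. For condition (1), any very close degeneration $f$ has $f(0)=\gr(\cE^\bullet)\not\cong\cE=f(1)$, so its weighted filtration $\cE^\bullet$ is nontrivial and at least one $\cE^l$ is a proper nonzero subbundle. In the weight formula each summand is $\le 0$ (a factor $\rk(\cE^l)(n-\rk(\cE^l))\ge 0$ times $\mu(\cE^l)-\mu(\cE/\cE^l)<0$), and the summand for that $l$ is strictly negative, whence $\wt(f^*\cL_{\det})<0$. For condition (2), a slope-stable bundle over $\kbar$ is simple, so $\End(\cE)=k$ and $\Aut_{\Bun}(\cE)=\bG_m$; after dividing out the central $\bG_m$ this gives $\dim\Aut_{\overline{\Bun}}(\cE)=0$.

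For the converse I argue by contraposition: suppose some proper nonzero subbundle $\cE'$ satisfies $\mu(\cE')\ge\mu(\cE)$, and consider the two-step filtration $0\subset\cE'\subset\cE$, whose associated graded is $\cE'\oplus\cE/\cE'$ and whose weight is $2\,\rk(\cE')\rk(\cE/\cE')\big(\mu(\cE')-\mu(\cE/\cE')\big)\ge 0$. This is where the main subtlety lies, and it is exactly why both clauses of Definition \ref{Def:Lstab} are required: if $\cE\not\cong\cE'\oplus\cE/\cE'$ then the filtration is a genuine very close degeneration of weight $\ge 0$, violating (1); but if $\cE\cong\cE'\oplus\cE/\cE'$ the morphism has $f(0)\cong f(1)$ and is \emph{not} a very close degeneration, so (1) cannot detect it, and instead $\cE$ is decomposable into two nonzero summands, so $\Aut_{\Bun}(\cE)\supseteq\bG_m^2$ and $\dim\Aut_{\overline{\Bun}}(\cE)\ge 1$, violating (2). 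In either case $\cE$ fails to be $\cL_{\det}$-stable, establishing the contrapositive. The one delicate point is therefore the boundary case $\mu(\cE')=\mu(\cE)$ combined with decomposability: the weight condition (1) sees only the non-split strictly destabilizing degenerations, and it is precisely the automorphism-dimension condition (2) that excludes the polystable (direct-sum) bundles sitting on the semistable boundary.
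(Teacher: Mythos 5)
Your proof is correct and follows essentially the same route as the paper: the weight formula $\wt(\cL_{\det}|_{f(0)})=2\sum_l\rk(\cE^l)(n-\rk(\cE^l))(\mu(\cE^l)-\mu(\cE/\cE^l))$ combined with the classification of very close degenerations as weighted filtrations, with the two-step filtration $0\subset\cE'\subset\cE$ furnishing the destabilizing degeneration in the converse. If anything you are more careful than the paper, which writes ``this is $<0$ unless $\mu(\cE^i)>\mu(\cE/\cE^i)$ for some $i$'' and thereby elides the boundary case $\mu(\cE')=\mu(\cE)$: as you correctly note, when the extension is non-split this still gives a genuine very close degeneration of weight $0$ violating condition (1), while in the split (polystable) case $f(0)\cong f(1)$ is not a very close degeneration at all and stability fails instead through the automorphism condition (2) on the rigidified stack.
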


%%%%
\subsection{G-bundles on curves}
%%%%
Let us formulate the analog for $G$-bundles, where $G$ is a
semisimple group over $k$ and we assume $k=\kbar$ to be
algebraically closed. We denote by $\Bun_G$ the stack of $G$-bundles
on $C$.

\subsubsection{The line bundle}
For the stability condition we need a line bundle on $\Bun_G$. One
way to construct a positive line bundle is to choose the adjoint
representation $\Ad:G\to \GL(\Lie(G))$, which defines for any
$G$-bundle $\cP$ its adjoint bundle $\Ad(\cP):=\cP\times^G \Lie(G)$
and set:  $$\cL_{\det}|_{\cP}:= \det H^*(C, \Ad(\cP))^\vee.$$ If $G$
is simple and simply connected, it is known that $\Pic(\Bun_G)\cong
\bZ$ (e.g. \cite{BiswasHoffmann}). In general $\cL_{\det}$ will not
generate the Picard group, but since our stability condition does
not change if we replace $\cL$ by a multiple of the bundle this line
bundle will suffice for us.

\subsubsection{Very close degenerations of $G$-bundles}
Recall from section \ref{Sec:GIT-quot} that for a cocharacter
$\lambda \colon \bG_m \to G$ we  denote by
$P_\lambda,U_\lambda,L_\lambda$ the corresponding parabolic
subgroup, its unipotent radical and the Levi subgroup.

To understand very close degenerations of bundles will amount to the
observation that Lemma \ref*{equivG} has an extension that holds for
families of bundles.

The source of degenerations is the following analog of the Rees
construction. Given $\lambda\colon \bG_m \to G$ we obtain a
homomorphism of group schemes over $\bG_m$:
\begin{align*}
\conj_\lambda\colon P_\lambda \times \bG_m &\to P_\lambda \times \bG_m\\
(p,t) &\mapsto (\lambda(t)p\lambda(t)^{-1},t).
\end{align*}
By \cite[Proposition 4.2]{Hesselink} this homomorphism extends to a
morphism of group schemes over $\bA^1$:
\begin{align*}
\gr_\lambda\colon P_\lambda \times \bA^1 &\to P_\lambda \times \bA^1
\end{align*}
in such a way that $\gr_{\lambda}(p,0)= \lim_{t\to 0}
\lambda(t)p\lambda(t)^{-1} \in L_\lambda \times 0$.

Moreover, these morphisms are $\bG_m$ equivariant with respect to
the action $(\conj_\lambda,\act)$ on $P_\lambda \times \bA^1$.

Given  a $P_\lambda$ bundle $\cE_\lambda$ on a scheme $X$ this
morphism defines a $P_\lambda$ bundle on $X \times [\bA^1/\bG_m]$
by:
$$\Rees(\cE_{\lambda},\lambda) := [\big((\cE_{\lambda} \times \bA^1)\times_{\bA^1}^{\gr_\lambda} (P_\lambda \times \bA^1)\big)/\bG_m],$$
where $\times_{\bA^1}^{\gr_\lambda}$ denotes the bundle induced via
the morphism $\gr_\lambda$, i.e., we take the product over $\bA^1$
and divide by the diagonal action of the group scheme
$P_\lambda\times \bA^1/\bA^1$, which acts on the right factor via
$\gr_\lambda$. By construction this bundle satisfies
$\Rees(\cE_{\lambda},\lambda)|_{X\times 1} \cong \cE_\lambda$ and
$$\Rees(\cE_{\lambda},\lambda)|_{X\times 0} \cong
\cE_{\lambda}/U_\lambda \times^{L_\lambda} P_\lambda$$ is the analog
of the associated graded bundle.

\begin{remark}
{\rm
    If $\lambda^\prime\colon \bG_m \to P_\lambda$ is conjugate to $\lambda$ in $P_\lambda$, say by an element $u\in U_{\lambda}$  then $P:=P_\lambda=P_\lambda^\prime$ and
    $$\gr_{\lambda^\prime}(p,t) = \gr_{\lambda}(upu^{-1},t).$$
    Therefore we also have $$\Rees(\cE_\lambda,\lambda) \cong \Rees(\cE_\lambda,\lambda^\prime),$$
    which tells us that the Rees construction only depends on the reduction to $P$ and the homomorphism $$\overline{\lambda}\colon \bG_m \to Z(P/U) \subset P/U.$$
    In the case $G=\GL(V)$ this datum is the analog of a weighted filtration on $V$, whereas $\lambda\colon \bG_m \to P \subset \GL(V)$ would define a grading on $V$.}
\end{remark}

Given a $G$-bundle $\cE$, a cocharacter $\lambda$ and a reduction
$\cE_\lambda$ of $\cE$ to a $P_\lambda$ bundle the $G$-bundle
$\Rees(\cE_\lambda,\lambda)\times^{P_\lambda} G$ defines a morphism
$f\colon [\bA^1/\bG_m]\to \Bun_G$ with $f(1)=\cE$. We claim that all
very close degenerations arise in this way:

\begin{lemma}\label{Lem:ReesG}
    Let $G$ be a split reductive group over $k$. Given a very close degeneration $f\colon [\bA^1/\bG_m] \to \Bun_G$ corresponding to a family $\cE$ of $G$-bundles on $X\times [\bA^1/\bG_m]$ there exist:
    \begin{enumerate}
        \item[\rm (1)] a cocharacter $\lambda\colon \bG_m \to G$, canonical up to conjugation,
        \item[\rm (2)] a reduction $\cE_\lambda$ of the bundle $\cE$ to $P_\lambda$,
        \item[\rm (3)] an isomorphism $\cE_\lambda \cong \Rees(\cE_{\lambda}|_{X\times 1},\lambda)$.
    \end{enumerate}
\end{lemma}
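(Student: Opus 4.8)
The plan is to prove the family version of Lemma \ref{equivG}, running the same three steps over the curve $C$ (the base $X$ of the statement) in place of a point. Giving $f$ is the same as giving a $\bG_m$-equivariant $G$-bundle $\cE$ on $C\times\bA^1$, under the identification $C\times[\bA^1/\bG_m]=[(C\times\bA^1)/\bG_m]$ with $\bG_m$ scaling $\bA^1$ and acting trivially on $C$; write $P_0:=\cE|_{C\times 0}$ for the central fibre with its residual $\bG_m$-action. First I would produce the cocharacter (1). This residual action presents $P_0$ as a $\bG_m$-equivariant $G$-bundle on $C$, i.e. a homomorphism $\phi\colon\bG_m\to\underline{\Aut}_G(P_0)$ to its automorphism group scheme, equivalently a cocharacter of the reductive $C$-group scheme $\cG_0:=P_0\times^{G,\conj}G$. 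Since $G$ is split, the geometric conjugacy classes of cocharacters form the constant sheaf $X_*(T)/W$ on $C$, so the fibrewise type of $\phi$ is a single element, namely a cocharacter $\lambda\colon\bG_m\to G$ canonical up to conjugation (and nontrivial, by Lemma \ref{Lem:vcdeformation}(1)). The centralizer $\Centr_{\cG_0}(\phi)$ is then a Levi subgroup scheme of type $L_\lambda$, which is the same datum as a reduction of $P_0$ to $L_\lambda=\Centr_G(\lambda)$ on which $\phi$ becomes the central cocharacter $\overline\lambda\colon\bG_m\to Z(L_\lambda)$.

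Next I would extend this to a reduction of all of $\cE$ to $P_\lambda$, which is (2), exactly as in Lemma \ref{equivG}. Such a reduction is a $\bG_m$-equivariant section of $\cE/P_\lambda\to C\times\bA^1$, and the $L_\lambda$-reduction just found supplies one over $C\times 0$. I would lift it order by order in $x$: the obstruction to extending an equivariant section from $\Spec(k[x]/x^{n-1})$ to $\Spec(k[x]/x^n)$, and the ambiguity of such an extension, lie in the $\bG_m$-invariant parts of $H^1(C,\cW)$ and $H^0(C,\cW)$, where $\cW$ is the bundle on $C$ associated with the $L_\lambda$-reduction of $P_0$ and the representation $\cg/\cp_\lambda$, twisted by the line $(x^{n-1})/(x^n)$. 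Through $\lambda$ the group $\bG_m$ acts with strictly negative weights on $\cg/\cp_\lambda$ and with non-positive weight on $(x^{n-1})/(x^n)$, so $\cW$ carries strictly negative weights and both invariant parts vanish; hence the lift exists and is unique at every order. By Lemma \ref{Lem:vcdeformation}(2) these infinitesimal reductions glue to the desired reduction $\cE_\lambda$ over $C\times[\bA^1/\bG_m]$.

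Finally I would identify $\cE_\lambda$ with $\Rees(\cE_\lambda|_{C\times 1},\lambda)$, giving (3). Viewing $\cE_\lambda$ as a $\bG_m$-equivariant $P_\lambda$-bundle on $C\times\bA^1$, I would pass to the twisted action $t\star p:=(t.p)\cdot\lambda(t^{-1})$, for which the $L_\lambda$-reduction of $P_0$ is a fixed section over $C\times 0$. Since $\lambda$ acts with non-negative weights on $\Lie(P_\lambda)$ and on $T_{\bA^1}$, the Bia\l ynicki--Birula argument of \cite{Hesselink} used in Lemma \ref{equivG} produces a $\lambda$-equivariant section $C\times\bA^1\to\cE_\lambda$ extending it; equivalently the same infinitesimal computation (again with vanishing $\bG_m$-invariant $H^1$) yields a compatible family of $\lambda$-equivariant sections, to which Lemma \ref{Lem:vcdeformation}(2) applies. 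Such a section is exactly a morphism of $P_\lambda$-bundles $\Rees(\cE_\lambda|_{C\times 1},\lambda)\to\cE_\lambda$ over $C\times[\bA^1/\bG_m]$, and a morphism of torsors is an isomorphism.

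The main obstacle is the first step. In Lemma \ref{equivG} the base is a point, so trivializing $P_0$ there reads off $\lambda$ at once; here the residual $\bG_m$ lives a priori only inside the automorphism group scheme of a bundle over the whole curve, and the real content is to see---using that $G$ is split and $C$ connected---that its fibrewise type is constant and hence globalizes to a single conjugacy class $\lambda$ together with a Levi reduction of $P_0$. Once $\lambda$ and this reduction are in hand, the lifting and Rees steps are formally identical to the point case, the only difference being that tangent spaces are replaced by cohomology over $C$, whose $\bG_m$-invariant parts still vanish by the weight estimate.
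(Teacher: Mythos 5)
Your steps (1) and (2) are essentially sound and run parallel to the paper's argument, with two fine points worth flagging. First, the local constancy of the fibrewise conjugacy class of $\phi$ is not free from "$G$ split": it is exactly the rigidity statement the paper extracts from \cite[Expos\'e XI, Corollary 5.2bis]{SGA3}, namely that the transporter scheme $\Transp_G(\lambda_0,\lambda)$ is smooth over the base, so that the locus where $\phi$ has a given type is open; your appeal to the constant sheaf $X_*(T)/W$ needs this input. Second, a Levi subgroup scheme of $\cG_0$ is \emph{not} the same datum as a reduction of $P_0$ to $L_\lambda$ (Levis are not self-normalizing, so it only gives a reduction to $N_G(L_\lambda)$); what does give the $L_\lambda$-reduction is the cocharacter $\phi$ itself, viewed as a section of the orbit bundle $P_0\times^{G,\conj}(G\cdot\lambda)\cong P_0/L_\lambda$, using $\Centr_G(\lambda)=L_\lambda$. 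Since you carry $\phi$ along, this is a wording repair, and it is in fact a slightly more direct route than the paper's, which works with the $\cG^{\cE_0}$-torsor $\cE'=\Isom_G(\cE,\cE_0)$ and transfers reductions back to $G$ via its Lemma \ref{lemma1.14}. Your step (2) — obstruction and ambiguity in $H^1(C,\cW)$ and $H^0(C,\cW)$ with $\cW$ of strictly negative weights, then algebraization by Lemma \ref{Lem:vcdeformation}(2) — is exactly the paper's.

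The genuine gap is in step (3). Over the curve there is no ``$\lambda$-equivariant section $C\times\bA^1\to\cE_\lambda$'': a section of a $P_\lambda$-torsor trivializes it, and $\cE_\lambda|_{C\times 1}$ is in general nontrivial; the point-case proof of Lemma \ref{equivG} used the basepoint $x_1\in\pi^{-1}(1)$, which has no global analogue. Accordingly a section is also not ``exactly'' a morphism $\Rees(\cE_\lambda|_{C\times 1},\lambda)\to\cE_\lambda$; the bundle morphism is strictly weaker data, and it is what one must produce. Your fallback deformation argument fails for a weight reason: the relevant deformation sheaf is now built from $\Lie(P_\lambda)$, on which $\overline{\lambda}$ acts with \emph{non-negative} weights, so after twisting by $(x^{n-1})/(x^n)$ the $\bG_m$-invariant part is the weight-$(n-1)$ summand of $\ad(\cE_\lambda|_{C\times 0})$, and $H^1(C,-)$ of that bundle need not vanish. (In the point case existence was automatic because $H^1(B\bG_m,-)=0$ by linear reductivity; over $C$ genuine obstruction groups appear.) This is precisely why the paper calls Hesselink's result ``the needed analog'' of Bia\l ynicki--Birula: by \cite[Proposition 4.2]{Hesselink} the functor whose $S$-points are morphisms $S\times\bA^1\to\cE_\lambda$ restricting to the twisted action on $S\times\bG_m$ is represented by a \emph{closed} subscheme of $\cE_\lambda$; since limits $\lim_{t\to 0}t\star p$ exist fibrewise by the point case, this closed subscheme is all of $\cE_\lambda$, the twisted action extends to $\overline{\star}\colon\bA^1\times\cE_\lambda\to\cE_\lambda$, and restricting to $\bA^1\times\cE_\lambda|_{C\times 1}$ yields the morphism of $P_\lambda$-bundles $\Rees(\cE_\lambda|_{C\times 1},\lambda)\to\cE_\lambda$, which is an isomorphism because any morphism of torsors is. Your step (3) should be replaced by this representability argument.
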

\begin{proof}
    Given $\cE$ we will again denote by $\cE_0:=\cE|_{X\times 0}$ and $\cE_1:=\cE|_{X\times 1}$.
    We define the group scheme $\cG^{\cE_0}:=\Aut_G(\cP_0/X)= \cE_0 \times^{G,\conj} G$. This is a group scheme over $X$ that is an inner form of $G\times X$. And the morphism $f|_{[0/\bG_m]}$ induces a morphism $\lambda_0\colon \bG_m\times X \to \Aut_G(\cE_0/X)= \cG^{\cE_0}$.

    As in Lemma \ref{equivG} (2) it is convenient to replace $\cE$ by the $\cG^{\cE_0}$-torsor $\cE^\prime:=\Isom_G(\cE,\cE_0)$. We know that $\lambda_0$ defines a parabolic subgroup $\cP_{\lambda_0} \subset \cG^{\cE_0}$ and the canonical reduction of $\cE^\prime_0$ to $\cP_{\lambda_0}$ lifts uniquely to a reduction $\cE^\prime_{\lambda}$ of $\cE^\prime$ by the same argument used in Lemma \ref{equivG}. The last step of the proof is then to consder the twisted action $\star\colon \bG_m \times \cE^\prime_\lambda \to \cE^\prime_\lambda$. Note that the fixed points for the action are simply the points in the Levi subgroup  $\cL_{\lambda_0} \subset  \cE^\prime_0=\cG^{\cE_0}$.

    The needed analog of the Bia\l ynicki-Birula decomposition is a result of Hesselinck \cite{Hesselink}: By the lemma we already know that for all geometric points $x$ of $X$ and $p\in \cE^\prime_\lambda|_x$ all limit points $\lim_{t\to 0}\lambda(t) \star p$ exist. On the other hand by \cite[Proposition 4.2]{Hesselink} the functor whose $S$-points are morphisms $S\times \bA^1 \to \cE^\prime_\lambda$ such that the restriction to $S\times \bG_m$ is given by the action of $\bG_m$ on $\cE^\prime_\lambda$ is represented by a closed subscheme of $\cE^\prime_\lambda$. Thus the functor is represented by $\cE^\prime_\lambda$.

    Thus the twisted action $\star$ on $\cE_\lambda^\prime$ extends to a morphism $\overline{\star}\colon \bA^1 \times \cE_\lambda^\prime \to \cE_\lambda^\prime$. In particular this induces $\bA^1 \times \cE_{\lambda,1} \to \cE_\lambda$ and thus a morphism of $\cP_{\lambda_0}$-bundles $\Rees(\cE_{\lambda,1},\lambda) \to \cE_\lambda$.

    This proves that the statement of the Lemma holds if we replace $G$ by $\cG^{\cE_0}$.

    To compare this with the description given in the lemma note that for any geometric point $x\in X(\kbar)$ the choice of a trivialization $\cE_{0,x} \cong G$ defines an isomorphism $G^{\cP_0} \cong G$ and therefore $\lambda_0|_x$ defines a defines a conjugacy class of cocharacters $\lambda\colon \bG_m \to G$. This conjugacy class is locally constant (and therefore does not depend on the choice of $x$) because we know from \cite[Expos\'e XI, Corollary 5.2bis]{SGA3} that the scheme parametrizing conjugation of cocharacters  $\Transp_G(\lambda_0,\lambda)$ is smooth over $X$. (This is the analog of the statement for vector bundles, that a $\bG_m$ action on $\cE$ allows to decompose $\cE=\oplus \cE_i$ as bundles, i.e. the dimension of the weight spaces of the fibers is constant over $x$.)
    This defines $\lambda$. To conclude we only need to recall that reductions of $\cE^\prime$ to $\cP_{\lambda_0}$ correspond to reductions of $\cE$ to $P_\lambda$:
\begin{lemma}
\label{lemma1.14}
   Let $\cG \to X$ be a reductive group scheme, $\lambda \colon \bG_{m,X}\to \cG$ a cocharacter and $\cE$ a $\cG$-torsor over $X$. Then a natural bijection between:
   \begin{enumerate}
    \item[\rm (1)] Reductions $\cE_\lambda$ of $\cE$ to $P_\lambda$,
    \item[\rm (2)] Parabolic subgroups $\cP \subset \cG^{\cE}=\Aut_{\cG}(\cE/X)$ that are locally conjugate to $P_\lambda$,
    \end{enumerate}
    is given by $\cE_\lambda \mapsto \cP:=\Aut_{P_\lambda}(\cE_\lambda/X) \subset \Aut_{\cG}(\cE/X)$.
\end{lemma}

\begin{proof}[Proof of Lemma \ref{lemma1.14}]
    A reduction of $\cE$ is a section $s \colon X \to \cE/P_{\lambda}$. Note that $\cG^{\cE}$ acts on $\cE/P_{\lambda}$ and $\Stab_{\cG^{\cE}}(s) \subset \cG^{\cE}$ is a parabolic subgroup that is locally of the same type as $P_{\lambda}$, becasuse this holds if $\cE$ is trivial and $s$ lifts to a section of $\cE$. Locally in the smooth topology we may assume these conditions.

    Similarly given $\cP \subset \cG^{\cE}$ locally the action of $\cP$ on $\cE/P_\lambda$ has a unique fixed point and this defines a section.
\hfill $\Box$
\end{proof}
    Using the lemma we find that both sections of $\cE^\prime/\cP_{\lambda_0}$ and sections of $\cE/P_\lambda$ correspond to parabolic subgroups of $\Aut_G(\cE)=\Aut_{\cG^{\cE_0}}(\cE^\prime)$. This proves the lemma.
\hfill $\Box$
\end{proof}
%{\tt I should compare the associated vector bundles...}
\begin{remark}
{\rm    Note that in the above result we assumed that $G$ is a split group. In general, we saw that the natural subgroup that contains a cocharacter is $\Aut_G(\cE_0)$, which is an inner form of $G$ over $X$. In particular it may well happen that $G$ does not admit any cocharacter or any parabolic subgroup.

    This is apparent for example in the case $k=\bR$ and $G=U(n)$. For a $G$-torsor $\cE$ on $C$ we will find a canonical reduction to a parabolic subgroup $P_{\kbar}\subset G_{\kbar}$ but the descent datum will then only be given for $\cP \subset \Aut_G(\cE/X)$.}
\end{remark}

\subsubsection{The numerical criterion}
Finally we have to compute the weight of $\cL_{\det}$ on very close
degenerations.

The computation is the same as for vector bundles and for the
criterion it is sometimes convenient to reduce it to reductions for
maximal parabolic subgroups. Let us  choose $T\subset B \subset G$ a
maximal torus and a Borel subgroup and $\lambda \colon \bG_m \to G$
a dominant cocharacter, i.e. $\langle\lambda,\alpha\rangle \geq 0$
for all roots such that $\cg_\alpha \in \Lie(B)$. Let us denote by
$I$ the set of positive simple roots with respect to $(T,B)$ and by
$I_P:=\{\alpha_i \in I | \lambda(\alpha_i)=0\}$ the simple roots
$\alpha_i$ for which $-\alpha_i$ is also a root of $P_\lambda$. For
$j\in I$ let us denote by $\check{\omega}_j\in X_*(T)_\bR$ the
cocharacter defined by $\check{\omega}_j(\alpha_i) = \delta_{ij}$.
And by $P_j$ the corresponding maximal parabolic subgroup.

Then $\lambda \colon \bG_m \to Z(L_\lambda) \subset L_\lambda
\subset P_\lambda$. Thus for any very close degeneration $f\colon
[\bA^1/\bG_m] \to \Bun_G$ given by $\Rees(\cE_\lambda,\lambda)$ the
bundle $\cL_{\det}$ defines a morphism
$$ \wt_{\cL} \colon X_*(Z_\lambda) \subset \Aut_{\Bun_G}(f(0)) \to \bZ.$$

Then $\lambda= \sum_{j\in I-I_P} a_j \check{\omega}_j$ for some $a_j
>0$.

\begin{align*}
\wt(\cL_{\det}|_{f(0)}) &= \wt_{\cL}(\lambda) = \sum_{j\in I-I_P}
a_j \wt_{\cL}(\check{\omega}_j).
\end{align*}
For each $j$ we get a decomposition $\Lie(G) = \oplus_i \Lie(G)_i$,
where $\Lie(G)_i$ is the subspace of the Lie algebra on which
$\check{\omega}_j$ acts with weight $i$. Each of these spaces is a
representation of $L_\lambda$ and also of the Levi subgroups $L_j$
of $P_j$. Using this decomposition we find as in the case of vector
bundles:

%Choosing a maximal torus and a Borel subgroup $T\subset B \subset G$ we may assume that $\lambda$ is dominant, i.e. $B \subset P_\lambda$. This induces a root decomposition $Lie(G)=\Lie(T) \oplus \bigoplus_{\alpha} \cg_{\alpha}$ and $\Lie(G)_i= \bigoplus_{\alpha | \lambda(\alpha)=i}\cg_{\alpha}$ for $i\neq 0$ and $\Lie(G)_0= \Lie(L_\lambda)$. Moreover all $\Lie(G)_i$ are representations of $L_\lambda$ and $\Lie(G)_{-i} \cong \Lie(G)_i$

%If $\lambda= \check{\omega}_j$ we ge

\begin{align*}
\wt_{\cL}(\check{\omega}_j) &= -\wt_{\bG_m}(\det H^*(C,\oplus \cE_{0,\lambda} \times^{L_\lambda} \Lie(G)_i)) \\
&= \sum_{i} i \cdot \chi( \cE_{0,\lambda} \times^{L_\lambda} \Lie(G)_i)\\
& =\sum_{i} i  (\deg(\cE_{0,\lambda} \times^{L_\lambda} \Lie(G)_i) + \dim(\Lie(G)_i)(1-g))\\
&= 2\sum_{i>0} i (\deg(\cE_{0,\lambda} \times^{L_\lambda} \Lie(G)_i)\\
\end{align*}
Now $\deg(\cE_{0,\lambda} \times^{L_\lambda} \Lie(G)_i) =
(\deg(\det(\cE_{0,\lambda} \times^{L_\lambda} \Lie(G)_i))$.  Since
the Levi subgroups of maximal parabolics have only a one dimensional
space of characters, all of these degrees are positive multiples of
$\det(\Lie(P_j))$. Thus we find the classical stability criterion:
\begin{corollary}
    A $G$-bundle $\cE$ is $\cL_{\det}$-stable if and only if for all reductions $\cE_P$ to maximal parabolic subgroups $P\subset G$ we have $\deg(\cE_P \times^P \Lie(P)) <0$.
\end{corollary}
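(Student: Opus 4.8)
The plan is to read the corollary off Lemma~\ref{Lem:ReesG} together with the weight formula established just above. Recall that by Definition~\ref{Def:Lstab} the bundle $\cE$ is $\cL_{\det}$-stable exactly when (1) $\wt(f^*\cL_{\det})<0$ for every very close degeneration $f$, and (2) $\dim\Aut_G(\cE)=0$. By Lemma~\ref{Lem:ReesG} every such $f$ is, up to conjugation, of the form $\Rees(\cE_\lambda,\lambda)$ for a dominant cocharacter $\lambda=\sum_{j\in I-I_P}a_j\check{\omega}_j$ with all $a_j>0$ and a reduction $\cE_\lambda$ of $\cE$ to $P_\lambda$. Hence the whole content of condition (1) is encoded in the numbers $\wt_{\cL}(\check{\omega}_j)$ attached to the induced reductions $\cE_{P_j}:=\cE_\lambda\times^{P_\lambda}P_j$ to the maximal parabolics $P_j\supseteq P_\lambda$, $j\in I-I_P$.

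First I would pin down the sign of each $\wt_{\cL}(\check{\omega}_j)$. The formula above gives $\wt_{\cL}(\check{\omega}_j)=2\sum_{i>0}i\,\deg(\cE_{0,\lambda}\times^{L_\lambda}\Lie(G)_i)$, where $\Lie(G)_i$ is the $\check{\omega}_j$-weight-$i$ space. Since $\check{\omega}_j$ is central in $L_j$, each $\Lie(G)_i$ is already an $L_j$-representation, so each summand equals $\deg(\cE_{0,L_j}\times^{L_j}\Lie(G)_i)$ for the induced $L_j$-bundle. The crucial input is that for a maximal parabolic the rational character group $X^*(L_j)_{\bR}$ is one-dimensional; since the degree of an associated bundle depends only on the determinant character, all these degrees are simultaneously proportional to a single number $d_j$, namely the degree associated to the positive generator of $X^*(L_j)_{\bR}$. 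As $Z(L_j)=\check{\omega}_j(\bG_m)$ acts on $\det\Lie(G)_i$ with the positive weight $i\cdot\dim\Lie(G)_i$ for $i>0$, while it acts trivially on $\det\Lie(L_j)$, both $\wt_{\cL}(\check{\omega}_j)$ and $\deg(\cE_{P_j}\times^{P_j}\Lie(P_j))=\deg(\cE_{0,L_j}\times^{L_j}\Lie(P_j))$ are \emph{positive} multiples of $d_j$. In particular they have the same sign, and each is negative if and only if $\deg(\cE_{P_j}\times^{P_j}\Lie(P_j))<0$.

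With this in hand the equivalence follows. For the ``if'' direction, if $\deg(\cE_P\times^P\Lie(P))<0$ for every reduction to a maximal parabolic, then every $\wt_{\cL}(\check{\omega}_j)<0$, and since $a_j>0$ the identity $\wt(f^*\cL_{\det})=\sum_{j\in I-I_P}a_j\,\wt_{\cL}(\check{\omega}_j)$ shows that condition (1) holds for all $f$. Condition (2) is then automatic: a nontrivial cocharacter $\mu\colon\bG_m\to\Aut_G(\cE)$ would split $\cE$ as a bundle induced from the Levi $L_\mu$, yielding for some maximal parabolic a reduction with $\deg(\cE_P\times^P\Lie(P))\geq 0$ and contradicting strictness; hence $\dim\Aut_G(\cE)=\dim H^0(C,\ad\cE)=0$. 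For the ``only if'' direction, given a reduction $\cE_P$ to a maximal parabolic $P=P_j$ I would take $\lambda=\check{\omega}_j$: either this is a genuine very close degeneration, in which case condition (1) gives $\wt_{\cL}(\check{\omega}_j)<0$ and thus $\deg(\cE_P\times^P\Lie(P))<0$; or $f(0)\cong\cE$, in which case $\check{\omega}_j$ defines a nontrivial cocharacter into $\Aut_G(\cE)$, contradicting condition (2). Either way the strict inequality holds.

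The main obstacle is the second paragraph: making precise that the partial degrees $\deg(\cE_{0,\lambda}\times^{L_\lambda}\Lie(G)_i)$ and the single degree $\deg(\cE_P\times^P\Lie(P))$ are all positive multiples of one another. This rests on the one-dimensionality of $X^*(L_j)_{\bR}$ for maximal parabolics and on the positivity of the $Z(L_j)$-weights for $i>0$; the only genuinely delicate bookkeeping is tracking the reductions induced along $P_\lambda\subseteq P_j$ and checking that $\det\Lie(L_j)$ contributes trivially, so that $\Lie(P_j)$ and $\Lie(U_j)$ give the same character in $X^*(L_j)_{\bR}$.
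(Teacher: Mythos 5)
Your handling of condition (1) is essentially the paper's own proof: the paper likewise writes a dominant cocharacter as $\lambda=\sum_{j\in I-I_P}a_j\check{\omega}_j$ with $a_j>0$, reduces everything to the weights $\wt_{\cL}(\check{\omega}_j)=2\sum_{i>0}i\,\deg(\cE_{0,\lambda}\times^{L_\lambda}\Lie(G)_i)$, and then invokes exactly the fact you elaborate — that the character group of the Levi of a maximal parabolic is one-dimensional up to torsion, so all the partial degrees are positive multiples of the degree attached to $\det\Lie(P_j)$. Your verification that $\det\Lie(L_j)$ contributes trivially and that the $Z(L_j)$-weights are positive for $i>0$ is the right way to fill in the paper's one-line assertion, and your ``only if'' direction, via the dichotomy between a genuine very close degeneration and $f(0)\cong\cE$, is sound.

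The genuine gap is your claim that condition (2) is \emph{automatic}. You argue that $\dim\Aut_G(\cE)>0$ would produce a nontrivial cocharacter $\mu\colon\bG_m\to\Aut_G(\cE)$, but a positive-dimensional affine group need not contain any copy of $\bG_m$: it can be unipotent. Your argument correctly excludes tori (a nontrivial $\mu$ does make $\cE$ induced from a Levi, and since the degrees of the reductions to an induced parabolic and to its opposite are negatives of each other, some maximal parabolic reduction then has $\deg\geq 0$), but it says nothing about a subgroup $\bG_a\subset\Aut_G(\cE)$, i.e.\ a nontrivial unipotent automorphism, equivalently a nonzero nilpotent section of $\ad(\cE)$. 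Ruling these out needs a separate argument — for instance, attaching to a generically unipotent automorphism its canonical parabolic over $k(C)$, extending the reduction to all of $C$ by properness of $G/P$, and showing the resulting reduction violates the degree condition; this is Ramanathan's ``stable implies finite automorphism group'' and is genuinely more delicate in positive characteristic, over which the paper also works. (The paper's own proof is silent on condition (2) in this corollary, so you were right that something needed saying; the point is that the one sentence you offer would fail as stated, since it is false that every positive-dimensional automorphism group admits a nontrivial one-parameter subgroup.)
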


%{\tt Explain that the characters are dominant and that maximal parabolics and one character suffice}

\subsubsection{Parabolic structures}

Parabolic $G$-bundles are $G$-bundles equipped with a reduction of
structure group at a finite set of closed points.

Let us fix notation for these. We keep our reductive group $G$, the
curve $C$ and a finite set of rational points $\{x_1,\dots,
x_n\}\subset C(k)$ and parabolic subgroups $P_1,\dots,P_n \subset
G$.

$$\Bun_{G,\un{P},\un{x}}(S):=\left\langle (\cE,s_1,\dots,s_n) | \cE \in \Bun_G(S), s_i\colon S \to \cE_{x_i\times S}/P_i \text{ sections }\right\rangle$$

The forgetful map $\Bun_{G,\un{P},\un{x}} \to \Bun_G$ is a smooth
proper morphism with fibers isomorphic to $\prod_i G/P_i$.

In particular, very close degenerations of a parabolic $G$ bundle
are uniquely defined by a very close degenerations of the underlying
$G$-bundle.

There are more line bundles on $\Bun_{G,\un{P},\un{x}}$, namely any
dominant character of $\chi_i\colon P_i \to \bG_m$ defines a
positive line bundle on $G/P_i$ and this induces a line bundle on
$\Bun_{G,\un{P},\un{x}}$.

The weight of a this line bundle on a very close degeneration, is
given by the pairing of $\chi_i$ with the one parameter subgroup in
$\Aut_{P_i}(\cE_{x_i})$.

We will come back to this in the section on parahoric bundles
(Section \ref{sec:stabcond}).
%%%%
\subsection{The example of chains of bundles on curves}
%%%%
We briefly include the example of chains of bundles as an easy
example of a stability condition that depends on a parameter.

Again we fix a curve $C$. A holomorphic chain of length $r$ and rank
$\un{n} \in \bN^{r+1}$ is the datum $(\cE_i,\phi_i)$ where
$\cE_0,\dots,\cE_r$ are vector bundles of rank $n_i$ and
$\phi_i\colon \cE_i \to \cE_{i-1}$ are morphisms of $\cO_C$-modules.
The stack of chains is denoted $\Chain_{\un{n}}$. It is an algebraic
stack, locally of finite type. One way to see this is to show that
the forgetful map $\Chain_{\un{n}} \to \prod_{i=0}^r \Bun_{n_i}$ is
representable. As for the stack $\Bun_n^d$  all chains admit scalar
automorphisms $\bG_m$, so we will need to look for line bundles on
which these automorphisms act trivially.

The forgetful map to $\prod_{i=0}^r \Bun_{n_i}$ already gives a many
line bundles on $\Chain_{\un{n}}$, as we can take products of the
pull backs of the line bundles $\cL_{\det}$ on the stacks
$\Bun_{n_i}$. Somewhat surprisingly these are only used in
\cite{Schmitt}, whereas the standard stability conditions (e.g.,
\cite{AGPS}) arise  from the following bundles:
\begin{enumerate}
    \item[\rm (1)] $\cL_{\det}:= \det( H^*(C, \End(\oplus \cE_i)))^\vee$
    \item[\rm (2)] Fix any point $x\in C$ and $i=1,\dots r$. Set $\cL_i\coloneq \det(\Hom(\oplus_{j\geq i}\cE_{j,x},\oplus_{l<i}\cE_{l,x}))$.
\end{enumerate}

\begin{remark} {\rm Note that on all of these bundles the central automorphism group $\bG_m$ of a chain acts trivially and one can check that up to the multiple $[k(x):k]$ the Chern classes of the bundles $\cL_i$ do not depend on $x$. We will not use this fact.

The choice of the bundles $\cL_i \in \Pic(\Chain_{\un{n}})$ is made
to simplify our computations. From a more conceptual point of view
the lines $\cL_{n_i}:=\det(\cE_{i,x})$ define bundles on
$\Bun_{n_i}$ which are of weight $n_i$ with respect to the central
automorphism group $\bG_m$. The pull backs of these bundles generate
a subgroup of $\Pic(\Chain_{\un{n}})$ and the $\cL_i$ are a basis
for the bundles of weight $0$ in this subgroup.}
%In the GIT construction of moduli of chains, the bundles $\cL_i$ are ususally defined through global data.
\end{remark}

To classify maps $[\bA^1/\bG_m] \to \Chain_{\un{n}}$ note that
composing with $\forget\colon \Chain_{\un{n}} \to \prod \Bun_{n_i}$
such a morphism induces morphisms $[\bA^1/\bG_m] \to \Bun_{n_i}$,
which we already know to correspond to weighted filtrations of the
bundles $\cE_i$ and a lifting of a morphism  $[\bA^1/\bG_m] \to
\prod \Bun_{n_i}$ to $\Chain_{\un{n}}$ is given by homomorphisms
$\phi_i \colon \cE_i \to \cE_{i-1}$ that respect the filtration.

Thus we find that a very close degeneration of a chain $\cE_\bullet$
is a weighted filtration $\cE_\bullet^{i} \subset \cE_\bullet$ and
we already computed
\begin{align*}
\wt(\cL_{\det}|_{\gr(\cE_\bullet)}) &= 2 \sum_{i}  \rk(\cE^i_\bullet)(n-\rk(\cE^i_\bullet)) (\mu(\cE_\bullet/\cE^i_\bullet) - \mu(\cE^i_\bullet)) \\
&= 2 \sum_{i}  \rk(\cE^i_\bullet)n (\mu(\cE_\bullet) -
\mu(\cE^i_\bullet)).
\end{align*}
Further we have:
\begin{align*}
\wt(\cL_{j}|_{\gr(\cE_\bullet)}) &= \sum_{i} (\sum_{l\geq j}
\rk(\cE^i_l)n) - n^i(\sum_{l\geq j} \rk(\cE^i)).
\end{align*}

Thus we find that a chain $\cE_\bullet$ is $\cL_{\det} \tensor
\cL_i^{2m_i}$-stable if and only if for all subchains
$\cE^\prime_\bullet \subset \cE_\bullet$ we have
$$ \frac{\sum_i \deg(\cE_i^\prime) + \sum_j m_j \rk(\oplus_{l\geq j} \cE_l^\prime)}{\sum_i \rk(\cE_i^\prime)} < \frac{\sum_i \deg(\cE_i) + \sum_j m_j \rk(\oplus_{l\geq j} \cE_l)}{\sum_i \rk(\cE_i)}.$$
This is equivalent to the notion of $\alpha-$stability used in
\cite[Section 2.1]{AGPS}.

\begin{remark}
{\rm    Also for the moduli problem of coherent systems on $C$, i.e. pairs $(\cE,V)$ where $\cE$ is a vector bundle of rank $n$ on $C$ and $V \subset H^0(C,\cE)$ is a subspace of dimension $r$ one recovers the stability condition quite easily:
    Families over $S$ are pairs $(\cE,\cV, \phi\colon \cV \tensor \cO_C \to \cE)$ where $\cE$ is a family of vector bundles on $X\times S$, $\cV$ is a vector bundle on $S$ and we drop the condition that $\phi$ corresponds to an injective map $\cV\to \pr_{S,*} \cE$. We denote this stack by $\text{CohSys}_{n,r}$.
    There are natural forgetful maps $\text{CohSys}_{n,r} \to \Bun_n$ and $\text{CohSys}_{n,r}\to B\GL_n$ induced respectively by the bundles $\cE$ and $\cV$. As above for any point $x\in X$ we obtain a line bundle $\det(\cV)^n \tensor \det(\cE_x)^{-r}$ on $\text{CohSys}_{n,r}$ and together with $\cL_{\det}$ one then recovers the classical stabilty condition that one finds for example in \cite[Definition 2.2]{Bradlow}.}
\end{remark}
%%%%
\subsection{Further examples}
%%%%
Other, more advanced examples can be found in the article
\cite[Section 4.2]{DHL}. For example this contains an argument, how
the Futaki invariant introduced by Donaldson arises from the point
of view of algebraic stacks.

%%%%%%%%%%%%%%%%%%%%%%%%%%%%
\section{A criterion for separatedness of the stable locus}
%%%%%%%%%%%%%%%%%%%%%%%%%%%%
We now want to give a criterion which guarantees that the set of
$\cL$-stable points is a separated substack if the stack $\cM$ and
the line bundle $\cL$ satisfy suitable local conditions (Proposition
\ref{Prop:sep}). The article \cite{MT} by Martens and Thaddeus was
an important help to find the criterion. Again, the proofs turn out
to be quite close to arguments that already appear in Mumford's
book.
%%%%
\subsection{Motivation from the valuative criterion}
%%%%
Let us first sketch the basic idea. Let $\cM$ be an algebraic stack.
For the valuative criterion for separatedness one considers pairs
$f,g\colon \bD = \Spec k\pbl t\pbr  \to \cM$ together with an
isomorphism $f|_{\Spec k(\!(t)\!)} \cong g|_{\Spec k(\!(t)\!)}$ and
tries to prove that for such pairs $f\cong g$. The basic datum is
therefore a morphism
$$f\cup g \colon \bD\cup_{\mathring{\bD}} \bD \to \cM.$$
Note that the scheme $ \bD\cup_{\mathring{\bD}} \bD$ is a completed
neighborhood of the origins in the affine line with doubled origin
$$\bA^1 \cup_{\bG_m} \bA^1 \cong [\bA^2-\{0\}/\bG_m,(t,t^{-1})]
\subset [\bA^2/\bG_m].$$

Thus also the union of two copies of $\bD$ along their generic point
is naturally an open subscheme of a larger stack:
$$\bD\cup_{\mathring{\bD}} \bD \subset [(\bA^2 \times_{\bA^1}\bD)/\bG_m] = [\Spec (k[x,y]\tensor_{k[\pi],\pi=xy} k\pbl \pi\pbr ) /\bG_m,(t,t^{-1})],$$ where the right hand side inserts a single point $[0/\bG_m]$.

Further, the coordinate axes $\Spec k[x],\Spec k[y] \subset \Spec
k[x,y]=\bA^2$ define closed embeddings $[\bA^1/\bG_m] \to [(\bA^2
\times_{\bA^1}\bD)/\bG_m]$ that intersect in the origin $[0/\bG_m]$.
As the weights of the $\bG_m$ action on the two axes are $1$ and
$-1$ we again see that for any line bundle $\cL$ on this stack the
weights of the restriction to the two copies of $[\bA^1/\bG_m]$
differ by a sign.

In terms of $\cL$-stability on $\cM$ this means that whenever a
morphism $f,g\colon \bD\cup_{\mathring{\bD}} \bD \to \cM$ extends to
$ [(\bA^2 \times_{\bA^1}\bD)/\bG_m]$, only one of the two origins
can map to an $\cL$-stable point.

As the complement of the origin $[0/\bG_m]$ is of codimension $2$ in
the above stack one could  expect  that a morphism $f\cup g\colon
\bD\cup_{\mathring{\bD}} \bD  \to \cM$ extends to a morphism of some
blow up of  $[(\bA^2 \times_{\bA^1}\bD)/\bG_m]$ centered at the
origin. This will be the assumption that we will impose on $\cM$.

The basic observation then is that the exceptional fiber of such a
blow up can be chosen to be a chain of equivariant projective lines
and it will turn out that the weight argument indicated above still
works for such chains if the line bundle $\cL$ satisfies a numerical
positivity condition.

%%%%
\subsection{The test space for separatedness and equivariant blow ups}
%%%%
Let $R$ be a discrete valuation ring together with a local parameter
$\pi \in R$, $K:=R[\pi^{-1}]$ the fraction field and $k=R/(\pi)$ the
residue field.

As for the affine line, the scheme $\Spec R$ has a version with a
doubled special point $\ST_R := \Spec R \cup_{\Spec K} \Spec R$, the
test scheme for separateness. The analog of $[\bA^2/\bG_m]$ is given
as follows. The multiplicative group $\bG_m$ acts on
$R[x,y]/(xy-\pi)$ via $t.x:=tx, t.y:=t^{-1}y$. Let us denote
$$\overline{\ST}_R := [\Spec (R[x,y]/(xy-\pi)) /\bG_m].$$
As before we have:
\begin{enumerate}
    \item[\rm (1)] two open embeddings
    $$\xymatrix{
    j_x \colon \Spec R \ar@{^(->}[r]^{\circ}\ar[d]^{\cong} &  \overline{\ST}_R \\
    [\Spec R[x,x^{-1}]/\bG_m] \ar[r]^-{\cong} & [\Spec R[x,x^{-1},y]/(xy-\pi) /\bG_m]\ar@{^(->}[u]^{\circ} }$$
    $$\xymatrix{
        j_y \colon \Spec R \ar@{^(->}[r]\ar[d]^{\cong} &  \overline{\ST}_R \\
        [\Spec R[y,y^{-1}]/\bG_m] \ar[r]^-{\cong} & [\Spec R[x,y,y^{-1}]/(xy-\pi) /\bG_m]\ar@{^(->}[u]^{\circ} }$$
    %$$j_x\colon \Spec R \cong [\Spec R[x,x^{-1}]/\bG_m] \cong [\Spec R[x,x^{-1},y]/(xy-\pi) /\bG_m] \subset \overline{ST}$$
    %$$j_y\colon \Spec R \cong [\Spec R[y,y^{-1}]/\bG_m] \cong [\Spec R[x,y,y^{-1}]/(xy-\pi) /\bG_m] \subset \overline{ST}$$
    that coincide on $\Spec K$.
    \item[\rm (2)] two closed embeddings:
    $$i_x\colon [\bA^1_k/\bG_m] \cong [\Spec k[x]/\bG_m] \cong [\Spec R[x,y]/(y,xy-\pi) /\bG_m] \subset \overline{\ST}_R$$
    $$i_y\colon [\bA^1_k/\bG_m] \cong [\Spec k[y]/\bG_m] \cong [\Spec R[x,y]/(x,xy-\pi) /\bG_m] \subset \overline{\ST}_R$$
    and the intersection of these is $[\Spec k/\bG_m]=:[0/\bG_m]$.
\end{enumerate}
We will need to understand blow ups of $ \overline{\ST}_R$ supported
in $[\Spec k/\bG_m]$. For this, let us introduce some notation. A
{\em chain of projective lines} is a scheme $E= E_1 \cup \dots \cup
E_n$ where $E_i\subset E$ are closed subschemes, together with
isomorphisms $\phi_i\colon E_i\map{\cong} \bP^1$ such that $E_i\cap
E_{i+1} = \{x_i\}$ is a reduced point with $\phi_i(x_i) = \infty,
\phi_{i+1}(x_i)=0$. An equivariant chain of projective lines is a
chain of projective lines together with an action of $\bG_m$ such
that for each $i$ the action induces the standard action of some
weight $w_i$ on $\bP^1 =\Proj k[x,y]$, i.e. this is given by
$t.x=t^{w_i+d}x$, $t.y=t^{d}y$ for some $d$.

We say that an equivariant chain is of negative weight, if all the
$w_i$ are negative. In this case for all $i$ the points
$\phi_i^{-1}(0)$ are the repellent fixed points in $E_i$ for $t\to
0$ .

\begin{lemma}\label{Lem:EquivBl}
Let $I \subset R[x,y]/(xy-\pi)$ be a $\bG_m$ invariant ideal
supported in the origin $\Spec k = \Spec R[x,y]/(x,y,\pi)$. Then there exists
an invariant ideal $\tilde{I}$ and a blow up
$$ p \colon \Bl_{\tilde{I}}(\oST_R)= [\Bl_{\tilde{I}}\big(\Spec R[x,y]/(xy-\pi)\big)/\bG_m] \to \oST_R$$
dominating $\Bl_{I}$ such that $p^{-1}([0/\bG_m]) \cong [E/\bG_m]$
where $E$ is a chain of projective lines of negative weight.
\end{lemma}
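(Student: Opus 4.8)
The plan is to reduce the statement to a combinatorial question about the $\bG_m$-action on the regular surface $Y:=\Spec R[x,y]/(xy-\pi)$ and to answer it by a toric subdivision. First I would observe that, since $I$ is $\bG_m$-invariant, it is homogeneous for the grading $A=\bigoplus_{w\in\bZ}A_w$ of $A:=R[x,y]/(xy-\pi)$, where $A_w$ is free of rank one over $R$ on the monomial $b_w:=x^w$ for $w\ge 0$ and $b_w:=y^{-w}$ for $w\le 0$. Hence $I_w=\pi^{c_w}b_wR$ for integers $c_w\ge 0$, and ``supported at the origin'' amounts to $\sum_w c_w<\infty$ together with the slope bounds forced by multiplication by $x$ and $y$. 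Writing every monomial as $x^iy^j$ (so that $\pi=xy$) identifies $I$ with a monomial ideal in $x,y$, encoded by its Newton polyhedron $N(I)=\conv\{(i,j):x^iy^j\in I\}+\bR_{\ge 0}^2$. I would also record that $Y$ is regular of dimension two and that the origin is its unique $\bG_m$-fixed point, with cotangent weights $+1$ on $x$ and $-1$ on $y$.

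Next I would choose $\tilde I$ and produce the chain. The blow-up of a monomial ideal is toric: it is governed by the subdivision of the cone $\sigma=\bR_{\ge 0}^2$ (whose two rays are the boundary divisors $\{x=0\}$, $\{y=0\}$) by the rays normal to the compact edges of $N(I)$. I would take $\tilde I$ to be the monomial ideal whose fan is a \emph{regular} refinement of both this subdivision and of $\sigma$, obtained by inserting all interior lattice rays needed to make every maximal cone smooth. As this fan refines the normal fan of $N(I)$, the pullback $I\cdot\cO_{\Bl_{\tilde I}}$ becomes invertible, so the universal property of the blow-up supplies the dominating morphism $\Bl_{\tilde I}\to\Bl_I$. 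The interior rays $\rho_1,\dots,\rho_n$, ordered by angle from $\{x=0\}$ to $\{y=0\}$, yield $\bG_m$-invariant curves $E_1,\dots,E_n\cong\bP^1$ with each $E_k\cap E_{k+1}$ a single torus-fixed point: this is exactly a chain of projective lines, and it is precisely the fibre $p^{-1}([0/\bG_m])$ (the strict transforms of the two axes meet the ends of the chain but are not contracted). Since everything is determined by the monomial data, this fibre is computed over the residue field $k$, so only the hypothesis that $R$ be a discrete valuation ring is used.

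The real content is the negativity of the weights. On the component $E_k$ attached to a primitive interior ray $v_k=(p_k,q_k)$ with $p_k,q_k\ge 1$, the $\bG_m$-weight of the affine coordinate vanishing at the node $\phi_k^{-1}(0)$ equals $\langle m_k,\lambda_0\rangle$, where $\lambda_0=(1,-1)$ is the cocharacter of our action and $m_k=(q_k,-p_k)$ generates $v_k^{\perp}$ in the orientation forced by the chain conditions $\phi_k(x_k)=\infty$, $\phi_{k+1}(x_k)=0$. One computes $\langle m_k,\lambda_0\rangle=p_k+q_k$. The decisive point is that this is \emph{positive for every} $k$: the cocharacter $\lambda_0$ lies outside $\sigma$ (equivalently, the origin is a saddle, its cotangent weights having opposite signs), so the pairing never vanishes and keeps the same sign along the whole chain. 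In particular $\bG_m$ acts nontrivially on each $E_k$, and reversing the global ordering of the chain, which exchanges $0$ and $\infty$ at every node, turns all weights into $-(p_k+q_k)<0$. This produces the required equivariant chain of negative weight.

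The one genuinely delicate step is this last computation: for a general one-parameter subgroup the weights need not share a sign, and the statement would fail; what saves it is the uniform sign of the numbers $\langle m_k,\lambda_0\rangle$, i.e.\ the transversality of $\lambda_0=(1,-1)$ to $\sigma$. I would therefore isolate that as the key point, while treating the reduction to monomial ideals and the regular toric refinement as standard once the grading is in place. A secondary, routine matter is the generality of $R$, dealt with by noting that the exceptional fibre depends only on the associated graded and hence lives over the residue field $k$.
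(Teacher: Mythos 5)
Your proof is correct, and it takes a genuinely different route from the paper's. The paper first proves monomiality by an elimination argument: it takes a homogeneous generator $P(x,y)=\sum a_i x^{d+i}y^i$ and subtracts multiples of $xyP$ to kill the mixed coefficients one at a time; your observation that each graded piece of $R[x,y]/(xy-\pi)$ is free of rank one over $A_0=R$, so that a homogeneous ideal is automatically monomial ($I_w=\pi^{c_w}b_w R$ with $\pi^{c_w}b_w$ itself a monomial, where $b_w$ is $x^w$ or $y^{-w}$), is cleaner and reaches the same point. For the blow-up itself the paper argues by induction: it blows up the origin, computes the proper transform of the monomial ideal in the two charts, and tracks the chart weights, showing that the inequality $w_i>v_i$ between the two coordinate weights persists through the tower; this is elementary and self-contained. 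You instead invoke the toric dictionary for $\Spec R[x,y]/(xy-\pi)$ as a toric scheme over the discrete valuation ring, pass to the normal fan of the Newton polyhedron of $I$, take a regular refinement, and read everything off at once: the component $E_k$ attached to the interior primitive ray $(p_k,q_k)$ carries weight $\pm\langle (q_k,-p_k),(1,-1)\rangle=\pm(p_k+q_k)$, with uniform sign because the acting cocharacter $(1,-1)$ is not parallel to any interior ray. The two constructions are in fact the same geometry: inserting the ray $v+v'$ into a regular two-dimensional cone is exactly a point blow-up, so your regular refinement is the paper's tower of blow-ups rewritten in fan language, and composing those blow-ups (or taking a product of the corresponding invariant monomial ideals) furnishes the invariant $\tilde I$ dominating $\Bl_I$. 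What your version buys is a closed-form weight formula and a conceptual explanation of the uniform sign, which the paper only obtains implicitly through the chart bookkeeping; what the paper's version buys is independence from the toric-schemes-over-a-DVR formalism, which you cite as standard and which, in this two-dimensional setting, one should either reference explicitly or reduce to star subdivisions as just indicated. Your final orientation step (reversing the chain to turn all weights into $-(p_k+q_k)$) is legitimate, since the definition of a chain of negative weight does not privilege either end.
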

\begin{proof}
As $I$ is supported in $(x,y)$ there exists $n$ such that $(x,y)^n
\subset I$ and as it is $\bG_m$ invariant, it is homogeneous with
respect to the grading for which $x$ has weight $1$ and $y$ has
weight $-1$. Let  $P(x,y)= \sum_{i=l}^N a_i x^{d+i}y^i $ be a
homogeneous generator of weight $d\geq 0$ with $a_l\neq 0$ that is
not a monomial. We may assume $a_i\in R^*$ as $\pi=xy$. We claim
that then $x^{d+l}y^l \in I$. As $(x,y)^n \subset I$ we may assume
that $d+2N<n$. But then $P(x,y)-(a_{l+1}/a_l) xy P(x,y)\in I$ is an
element for which the coefficient of $x^{d+l+1}y^{l+1}$ vanishes.
Inductively this shows that $x^{d+l}y^l \in I$, so that $I$ is
monomial.

Write $I=(x^n,y^m,x^{n_i}y^{m_i})_{i=1,\dots N}$ with $n_i<n,m_i<m$.
This ideal becomes principal after sucessively blowing up $0$ and
then blowing up $0$ or $\infty$ in the exceptional $\bP^1$'s:
Blowing up $(x,y)$ we get charts with coordinates $(x,y) \mapsto
(x^\prime y,y)$ and $(x,y) \mapsto (x,xy^\prime)$. Since $x$ has
weight $1$ and $y$ has weight $-1$ we see that the weights of
$(x',y)$ are $(2,-1)$ and the weights of $(x,y')$ are $(1,-2)$.

In the first chart the proper transform of $I$ is $(x^{\prime
n}y^n,y^m,x^{\prime n_i}y^{m_i+n_i})_{i=1,\dots N}$. This ideal is
principal if $m=1$ and otherwise equal to an ideal of the form
$$y^k(y^{m-k}, \text{ mixed monomials of lower total degree}).$$ A
similar computation works in the other chart. By induction this
shows that the ideal will become principal after finitely many blow
ups and that in each chart the coordinates $(x^{(i)},y^{(i)})$ have
weights $(w_i,v_i)$ with $w_i>v_i$.
\hfill $\Box$
\end{proof}

\begin{remark}{\rm
    Let $\cL$ be a line bundle on $[\bP^1/(\bG_m,\act_w)]$ then $$\deg(\cL|_{\bP^1}) = \frac{1}{w}\big(\wt_{\bG_m}(\cL|_{\infty}) - \wt_{\bG_m}(\cL|_{0})\big).$$}
\end{remark}
\begin{proof}
    Write $d=\deg(\cL|_{\bP^1})$.
    Let $\bA^1_0=\Spec k[x], \bA^1_{\infty}=\Spec k[y]$ be the two coordinate charts of $\bP^1$ and $e_x \in \cL(\bA^1_0), e_y\in \cL(\bA^1_\infty)$ two generating sections such that $e_y=x^{d}e_x$ on $\Spec k[x,x^{-1}]$.

    Then $\act^{\#}(e_y)=t^{\wt_{\bG_m}(\cL|_{\infty})}e_y$ and $\act^{\#}(x^de_x)= t^{wd} t^{\wt_{\bG_m}(\cL|_{0})}x^de_x$. Thus we find
    $$ t^{wd}= t^{\wt_{\bG_m}(\cL|_{\infty})-\wt_{\bG_m}(\cL|_{0})}.$$
    \hfill $\Box$
\end{proof}

Combining the above computations we propose the following
definitions:

\begin{definition}\label{Def:almost_proper}{\rm
    Let $\cM$ be an algebraic stack, locally of finite type with affine diagonal. We say that $\cM$ is {\em almost proper} if
    \begin{enumerate}
    \item[\rm (1)] For all valuation rings $R$ with field of fractions $K$  and $f_K\colon \Spec K \to \cM$ there exists a finite extension $R^\prime/R$ and a morphism $f \colon \Spec R^\prime \to \cM$ such that $f|_{\Spec K^\prime } \cong f_K|_{\Spec K^\prime}$ and
    \item[\rm (2)] for all complete discrete valuation rings $R$ with algebraically closed residue field and all morphisms $f\colon ST_R \to \cM$ there exists a blow up $\Bl_{\tilde{I}}(\oST_R)$ supported at $0$ such that $f$ extends to a morphism $\overline{f} \colon \Bl_{\tilde{I}}(\oST_R) \to \cM$.

    \end{enumerate}
    Given a line bundle $\cL$ on an almost proper algebraic stack $\cM$ we say that $\cL$ is {\em nef on exceptional lines} if
    \begin{enumerate}
        \item[]for all $f\colon \ST_R \to \cM$ the extension $\overline{f}$ from (2) can be chosen such that for all equivariant projective lines $E_i$ in the exceptional fiber of the blow up we have $\deg(\cL|_{E_i})\geq 0.$
    \end{enumerate}}

%   We say that a pair $(\cM,\cL)$ {\em satisfies the valuative criterion} ($\star$) is $\cM$ is almost proper and $\cL$ is nef on exceptional lines.
\end{definition}

\begin{remark}{\rm
    Note that for schemes (and also algebraic spaces) of finite type the above definition reduces to the usual valuative criterion for properness: We already saw in Lemma \ref{Lem:vcdeformation} that in case $X=\cM$ is a scheme, any morphism from an equivariant projective line to $X$ must be constant and more generally if $\bG_m$ acts on $\Spec R[x,y]/(xy-\pi))$ with positive weight on $x$ and negative weight on $y$ the morphism $[(\Spec R[x,y]/(xy-\pi))/\bG_m] \to \Spec R$ is a good coarse moduli space (\cite{Alper}), so for any algebraic space $X$ any morphism $\Bl_{\tilde{I}}(\oST_R) \to X$ factors through $\Spec(R)$. Therefore the existence of a morphism as in (2) implies the valuative criterion for properness (\cite[Tag 0ARL]{Stacks}).

    Similarly (2) above could be used to define a notion of almost separatedness for stacks.}
\end{remark}
\begin{remark}{\rm 
    In the above definition we could have replaced condition (2) by the condition:
    \begin{enumerate}
        \item[\rm (2')] For all discrete valuation rings $R$ and all morphisms $f\colon ST_R \to \cM$ there exists a finite extension $R^\prime/R$ and a blow up $\Bl_{\tilde{I}}(\oST_{R^\prime})$ supported at $0$ such that $f$ extends to a morphism $\overline{f} \colon \Bl_{\tilde{I}}(\oST_{R^\prime}) \to \cM$.
    \end{enumerate}
    These two conditions are equivalent for stacks locally of finite type, we put (2) because it is sometimes slightly more convenient to check.}
\end{remark}
\begin{proof}
    We only need to show that (2) implies (2'). The standard argument for schemes can be adapted here with some extra care taking into account automorphisms of objects:
    Note first that because $\cM$ is locally of finite type it suffices to prove (2') in the case where the DVR $R$ has an algebraically closed residue field. In this case we denote by $\hat{R}$ the completion of $R$ and by $f_{\hat{R}}$ the restriction of $f$ to $\ST_{\hat{R}}$. This map has an extension $\overline{f}_{\hat{R}}\colon Bl_{{I}}(\oST_{\hat{R}}) \to \cM$ and we have seen in Lemma \ref{Lem:EquivBl} that we may assume that $I$ is already defined over $R$, i.e., that $ Bl_{{I}}(\oST_{\hat{R}})=  Bl_{{I}}(\oST_{R})_{\hat{R}}$.

    As the extension $\hat{R}/R$ is faithfully flat, to define an extension $\overline{f}\colon Bl_{{I}}(\oST_{R}) \to \cM$ of $f$ we need to define a descent datum for $\overline{\hat{f}}$, i.e. for the two projections $p_{1,2}\colon Bl_{{I}}(\oST_{R})_{\hat{R} \tensor_R \hat{R}} \to Bl_{{I}}(\oST_{R})_{\hat{R}}$ we need an element $\phi \in \Isom_{\cM}(p_1^*(\overline{\hat{f}}),p_2^*(\overline{\hat{f}}))$ that over $ST_{{\hat{R} \tensor_R \hat{R}} }$ coincides with the one given by $f$.

    Now note that if we denote by $\hat{K}$ the fraction field of $\hat{R}$ there is a cartesian diagram of rings (see e.g., \cite[Lemma 5]{Unif})
    $$\xymatrix{
        {\hat{R} \tensor_R \hat{R}}\ar[r]\ar[d]^{\text{mult}} & \hat{K} \tensor_R \hat{K}\ar[d]^{\text{mult}} \\
        \hat{R} \ar[r] & \hat{K}.
        }$$
    Also by our assumptions on the diagonal of $\cM$ we know that $\Isom(p_1^*\overline{\hat{f}},p_2^*\overline{\hat{f}} ) \to  Bl_{{I}}(\oST_{R})_{\hat{R} \tensor_R \hat{R}}$ is affine and we have canonical sections of this morphism over the diagonal $\Delta\colon Bl_{{I}}(\oST_{\hat{R}})  \to  Bl_{{I}}(\oST_{R})_{\hat{R} \tensor_R \hat{R}}$ and over $Bl_{{I}}(\oST_{R})_{\hat{K} \tensor_R \hat{K}} = \ST_{\hat{K} \tensor_R \hat{K}}$ we have the section defined by the descent datum for $f$. As these agree on the intersections $\Delta: \Spec \hat{K} \to \Spec \hat{K} \tensor_R \hat{K}$ the cartesian diagram implies that these define $\phi$. This map is a cocycle, because this holds over the open subscheme $\ST_R$.
    \hfill $\Box$
\end{proof}

Recall from Remark \ref{Rem:L-stable} that $\cL$-stable points
define an abstract substack $\cM^s\subset \cM$, for these points
Definition \ref{Def:almost_proper} implies the valuative criterion:
\begin{proposition}[Separatedness of stable points]\label{Prop:sep}
    Let $\cM$ be an algebraic stack locally of finite type over $k$ with affine diagonal and $\cL$ a line bundle on $\cM$. Suppose that $\cM$ is almost proper and $\cL$ is nef on exceptional lines.
%   Suppose that the pair $(\cM,\cL)$ satisfies the condition
%   \begin{enumerate}
%       \item[$(\star)$] For all complete discrete valuation rings $R$ with algebraically closed residue field and all morphisms $f\colon ST_R \to \cM$ there exists a blow up $\Bl_{\tilde{I}}(\oST_R)$ supported at $0$ such that $f$ extends to a morphism $\overline{f} \colon \Bl_{\tilde{I}}(\oST_R) \to \cM$. Moreover this extension $\overline{f}$ can be chosen such that for all equivariant projective lines $E_i$ in the exceptional fiber of the blow up we have $\deg(\cL|_{E_i})\geq 0$.
%   \end{enumerate}
%If this holds,
Then the stack of stable points $\cM^s\subseteq \cM$ satisfies the
valuative criterion for separatedness, i.e., for any complete
discrete valuation ring $R$ with fraction field $K$  and
algebraically closed residue field any morphism $\ST_R \to \cM^s$
factors through $\Spec R$.
\end{proposition}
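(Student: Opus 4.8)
The plan is to verify the valuative criterion directly using the combinatorial and weight-theoretic input assembled above. Suppose we are given two morphisms $f,g\colon \Spec R \to \cM^s$ together with an isomorphism of their restrictions to $\Spec K$; this is exactly the data of a morphism $f\cup g\colon \ST_R \to \cM$ landing in $\cM^s$ over the two special points. I want to show $f\cong g$, i.e.\ that the two special points get identified.

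First I would invoke almost properness: by Definition \ref{Def:almost_proper}(2) there is a blow up $p\colon \Bl_{\tilde I}(\oST_R)\to \oST_R$, supported at $[0/\bG_m]$, through which $f\cup g$ extends to $\overline f\colon \Bl_{\tilde I}(\oST_R)\to \cM$; and since $\cL$ is nef on exceptional lines we may arrange $\deg(\cL|_{E_i})\ge 0$ for every component $E_i$ of the exceptional chain. By Lemma \ref{Lem:EquivBl} the exceptional fibre is $[E/\bG_m]$ for a chain of projective lines $E=E_1\cup\dots\cup E_n$ of \emph{negative} weight, joined at nodes, whose two outermost $\bG_m$-fixed points are precisely the images of the two special points $i_x(0),i_y(0)$ of $\oST_R$. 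Thus $\overline f$ restricts on each $E_i$ to an equivariant $\bP^1$ mapping into $\cM$, and restricts at each fixed point of the chain to a very close degeneration (or its reverse) of an object of $\cM$.

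The heart of the argument is then a weight telescoping along the chain. For each component $E_i\cong[\bP^1/(\bG_m,\act_{w_i})]$ with $w_i<0$, the degree formula from the Remark preceding Definition \ref{Def:almost_proper} gives
\begin{align*}
\deg(\cL|_{E_i}) &= \frac{1}{w_i}\big(\wt_{\bG_m}((\overline f^*\cL)|_{\infty_i}) - \wt_{\bG_m}((\overline f^*\cL)|_{0_i})\big),
\end{align*}
and nefness $\deg(\cL|_{E_i})\ge 0$ together with $w_i<0$ forces $\wt_{\bG_m}(\overline f^*\cL|_{\infty_i})\le \wt_{\bG_m}(\overline f^*\cL|_{0_i})$ at the two fixed points of each $E_i$. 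Matching weights across the nodes (where consecutive components share a fixed point) and telescoping from one end of the chain to the other, I obtain that the weight of $\overline f^*\cL$ at the fixed point over $i_x(0)$ is $\le$ its weight at the fixed point over $i_y(0)$. But the two embeddings $i_x,i_y\colon[\bA^1_k/\bG_m]\hookrightarrow\oST_R$ carry $\bG_m$-weights $+1$ and $-1$ on their coordinates, so the very close degenerations they induce at the two special points are reverses of one another: one computes $\wt(f_x^*\cL)=-\wt(f_y^*\cL)$. Combined with the telescoped inequality this yields $\wt(f_x^*\cL)\le -\wt(f_x^*\cL)$, i.e.\ one of the two weights is $\ge 0$.

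The conclusion is that at least one of the two special points is the target of a very close degeneration $f$ with $\wt(f^*\cL)\ge 0$, contradicting $\cL$-stability (condition (1) of Definition \ref{Def:Lstab}), \emph{unless} that special point is not a genuine very close degeneration, i.e.\ unless the relevant component maps to a constant and the two special points already coincide. Spelling this last dichotomy out carefully is what I expect to be the main obstacle: one must argue that if neither special point admits a strict degeneration then the whole exceptional chain is contracted, so that $\overline f$ factors through $\Spec R$ and $f\cong g$. Here I would lean on Lemma \ref{Lem:vcdeformation}(1), which says a nonconstant equivariant $\bP^1$ forces a nontrivial $\lambda_f$ into the automorphism group of the image of a fixed point, and on the finiteness of automorphisms guaranteed by condition (2) of $\cL$-stability, to rule out nonconstant components that would otherwise evade the weight inequality. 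Once the chain is shown to be contracted, $\overline f$ descends to $\oST_R$, the two origins are identified, and the valuative criterion for separatedness holds.
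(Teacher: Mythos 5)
Your overall strategy --- extend over a blow up via almost properness, identify the exceptional fibre as a negative-weight chain of equivariant lines, apply the degree formula on each component, telescope the weights of $\overline f^*\cL$ across the nodes, and play the resulting endpoint inequality against stability --- is exactly the paper's argument, and your handling of the dichotomy (a strict very close degeneration at one end forces a positive weight at the other end; otherwise all endpoint weights vanish, all $\deg(\cL|_{E_i})=0$, and stability together with condition (2) of Definition \ref{Def:Lstab} forces each component to be contracted, exactly as in the paper) is also the intended one. Two small slips in the bookkeeping: your displayed conclusion $\wt(f_x^*\cL)\le-\wt(f_x^*\cL)$ is garbled; the telescoped weights \emph{decrease} from the $x$-end to the $y$-end, giving $\wt(i_x^*\overline f^*\cL)=\wt_{\bG_m}(\overline f^*\cL|_{x_0})\ge\wt_{\bG_m}(\overline f^*\cL|_{x_n})=-\wt(i_y^*\overline f^*\cL)$, and the exact antisymmetry $\wt(f_x^*\cL)=-\wt(f_y^*\cL)$ holds only when $f$ extends to $\oST_R$ without blowing up. Neither slip affects the correct conclusion that the two endpoint weights cannot both be negative.

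The genuine gap is your final sentence: ``the two origins are identified, and the valuative criterion for separatedness holds.'' What the weight argument delivers is that $\overline f$ is constant on the exceptional chain and that the two special points are isomorphic as \emph{geometric points} of $\cM$; this is strictly weaker than the assertion that $\ST_R\to\cM$ factors through $\Spec R$, i.e.\ that $j_x\circ f\cong j_y\circ f$ as $R$-points compatibly with the given isomorphism over $K$. Since $\cM$ is a stack, the fact that $\Spec R$ is the good coarse space of $\oST_R$ only forces factorization for maps to algebraic \emph{spaces} (as the paper notes in the remark after Definition \ref{Def:almost_proper}), not for maps to $\cM$. The paper closes precisely this hole with a step you omit entirely: choose a smooth affine atlas $p\colon\Spec A\to\cM$ whose image contains the common special point, lift the restrictions of $\overline f$ to the thickenings $[(\Spec R[x,y]/(\pi^n,xy-\pi))/\bG_m]$ into $\Spec A$ inductively using smoothness of $p$, observe that each such lift factors through the coarse space $\Spec R/(\pi^n)$, and assemble the compatible maps into $\Spec R\to\Spec A$; the resulting $\tilde f\colon\oST_R\to\Spec R\to\Spec A$ lifts both $j_x\circ f$ and $j_y\circ f$ simultaneously, and only then do the two $R$-points coincide. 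Without some such descent argument (Lemma \ref{Lem:vcdeformation}(1) and finiteness of automorphisms, which you invoke, address the contraction of the chain but not this factorization), your proof establishes only the pointwise identification of special fibres, and the valuative criterion as stated remains unproved.
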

%The condition used here was inspired by the work of Martens and Thaddeus \cite{MT} where
\begin{proof}
    We need to show that for any morphism $f \colon \ST_R \to \cM$ such that all points in the image of $f$ are stable we have $j_x\circ f \cong j_y\circ f$.

    Let us first show that the conditions imply that the morphisms coincide on closed points, i.e., $f(j_x((\pi))) \cong f(j_y((\pi))) \in \cM$.

    If $f$ extends to a morphism $\overline{f}\colon \oST_R \to \cM$, we have $\wt(i_x^*f^*\cL)=-\wt(i_y^*f^*(\cL))$. As we assumed that both closed points are stable this shows that neither $i_x\circ f$ nor $i_y\circ f$ can be a very close degeneration, i.e. $f(j_x((\pi))) \cong \overline{f}(i_x(0)) = \overline{f}(i_y(0)) \cong f(j_y((\pi)))$.

    If $f$ does not extend, then by assumption there exists an extension $\overline{f}\colon \Bl_I(\oST_R) \to \cM$ such that $\deg(\cL|_{E_i})\geq 0$ on all equivariant $\bP^1$'s contained in the exceptional fiber of the blow up.

    Now for line bundles $\cL^\prime$ on $[\bP^1/\bG_m,\act_d]$ we saw that $\deg(\cL^\prime) = \frac{1}{d} \big(\wt_{\bG_m}(\cL|_\infty) - \wt_{\bG_m}(\cL|_{0})\big)$.
    Thus if we order the fixed points $x_0,\dots,x_{n}$ of the $\bG_m$-action on the chain $E_i$ such that $x_0$ is the point in the proper transform of the $x-$axis and $x_i,x_{i+1}$ correspond to $0,\infty$ in $E_i$ we find that $x_n$ corresponds to the proper transform of the $y$-axis. Note that $x_0$ being the repellent fixed point of $E_1$ we have $\wt(i_x^*\overline{f}^*\cL)=\wt_{\bG_m}(\overline{f}^*\cL|_{x_0})$  and similarly $\wt_{\bG_m}(\overline{f}^*\cL|_{x_k})=-\wt(i_y^*\overline{f}^*\cL)$.
    Finally the condition $\deg(\cL|_{E_i})\geq 0$ implies $\wt_{\bG_m}(\overline{f}^*\cL|_{x_i}) \geq \wt_{\bG_m}(\overline{f}^*\cL|_{x_{i+1}})$ for all $i$ as the exceptional divisors are of negative weight.
    Thus we find:
    $$\wt(i_x^*\overline{f}^*\cL)=\wt_{\bG_m}(\overline{f}^*\cL|_{x_0}) \geq \dots \geq \wt_{\bG_m}(\overline{f}^*\cL|_{x_n})=-\wt(i_y^*\overline{f}^*\cL).$$
    Now if $f(j_x((\pi)))\not\cong f(x_0)$ is stable, we know that $\wt_{\bG_m}(\overline{f}^*\cL|_{x_0}) <0$ so that $ \wt_{\bG_m}(\overline{f}^*\cL|_{x_n}) = - \wt(i_y^* f^*(\cL)) <0$, contradicting stability of $f(j_y((\pi)))$.

    Thus we find that $\deg(\cL|_{E_i})=0$ for all $i$ and $f(j_x((\pi)))\cong f(x_0)$. Then $\overline{f}(x_0)$ is stable so that $\overline{f}|_{E_1-\{x_1\}}$ must be constant and we inductively find that $f(x_0) \cong \dots \cong f(x_n)\cong  f(j_y((\pi)))$ and that $\overline{f}$ is constant on the exceptional divisor, so $f$ does extend to $\oST$.

    To conclude that this implies $j_x\circ f \cong j_y\circ f$ choose an affine scheme of finite type $p\colon \Spec(A) \to \cM$ such that $p$ is smooth and $f(j_x((\pi)))=\overline{f}(0)=f(j_y((\pi))) \in \im(p)$. Choose moreover a lift $x\in \Spec(A)$ of $\overline{f}([0/\bG_m])$. By smoothness we can inductively lift $\overline{f}|_{[(\Spec R[x,y]/(\pi^n,xy-\pi))/\bG_m]}$ to a morphism $\tilde{f}_n \colon [(\Spec R[x,y]/(\pi^n,xy-\pi))/\bG_m] \to \Spec A$. All of these maps factor through their coarse moduli space $[(\Spec R[x,y]/(\pi^n,xy-\pi))/\bG_m] \to \Spec R/(\pi^n)\to \Spec A$, defining a map $\Spec R \to \Spec A$ and thus $\tilde{f} \colon \oST_R \to \Spec R \to \Spec A$ that lifts both $j_x\circ f$ and $j_y\circ f$, so these maps coincide.
    \hfill $\Box$
\end{proof}
\begin{remark} {\rm 
    For semistable points that are not stable the above computation also suggests to define a notion of S-equivalence, as it shows that in an almost proper stack any two semistable degenerations could be joined by a chain of projective lines. If $\cL$ is nef on exceptional lines the line bundle would restrict to the trivial bundle on such a chain. In the examples this reproduces the usual notion of S-equivalence.}
\end{remark}

Before giving examples let us note that the Keel-Mori theorem now
implies the existence of coarse moduli spaces for $\cM^s$ in many
situations:

%%%%
\subsection{An existence result for coarse moduli spaces}
%%%%

\begin{proposition}\label{Prop:coarse}
    Let ($\cM,\cL$) be an almost proper algebraic stack with a line bundle $\cL$ that is nef on exceptional lines and suppose that $\cM^s \subset \cM$ is open. Then the stack $\cM^s$ admits a coarse moduli space $\cM^s \to M$, where $M$ is a separated algebraic space.
\end{proposition}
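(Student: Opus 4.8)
The plan is to show that the open substack $\cM^s$ is a separated algebraic stack with finite inertia, and then to invoke the Keel--Mori theorem \cite{KM} (in the form valid for stacks with finite inertia; see also \cite{AlperHallRydh}) to produce the coarse moduli space, its separatedness being inherited from that of $\cM^s$.

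First I would record the structural properties of $\cM^s$. Being open in $\cM$, it is again an algebraic stack, locally of finite type over $k$ and with affine diagonal; in particular its inertia $I_{\cM^s}\to \cM^s$ is affine, hence separated and of finite type. Condition (2) in Definition \ref{Def:Lstab} moreover guarantees that every stable point $x\in \cM^s(K)$ has $\dim_K\Aut_{\cM}(x)=0$, and since the diagonal is affine each $\Aut_{\cM}(x)$ is an affine $K$-group scheme of finite type and dimension $0$, hence finite. Thus $I_{\cM^s}\to \cM^s$ is quasi-finite and separated.

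The key step is to upgrade this to \emph{finite} inertia, equivalently to prove that $\cM^s$ is separated as an algebraic stack. As $\cM^s$ has affine---in particular quasi-compact and locally of finite type---diagonal, separatedness can be tested by the valuative criterion, and after the standard reduction to complete discrete valuation rings with algebraically closed residue field this is exactly the assertion of Proposition \ref{Prop:sep}: any $f\colon \ST_R\to \cM^s$ factors through $\Spec R$, i.e. two $R$-points of $\cM^s$ agreeing over $K$ become isomorphic, the uniqueness of the extending isomorphism being supplied by the separatedness of the affine diagonal. Hence $\Delta_{\cM^s}$ is proper; being also affine, it is finite, and therefore the inertia $I_{\cM^s}\to \cM^s$, which is a base change of $\Delta_{\cM^s}$, is finite as well.

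With finite inertia established, the Keel--Mori theorem \cite{KM} (respectively its extension \cite{AlperHallRydh}) yields a coarse moduli space $\cM^s\to M$ with $M$ an algebraic space, locally of finite type over $k$. Finally $M$ is separated, because the coarse space of a stack with finite inertia is separated exactly when the stack is, and we have just shown $\cM^s$ to be separated. I expect the main obstacle to lie in this middle step: one must check that Proposition \ref{Prop:sep} genuinely delivers the valuative criterion for separatedness of $\cM^s$ \emph{as a stack}---carrying out the reduction from arbitrary discrete valuation rings to the complete, algebraically-closed-residue-field case treated there, and translating the resulting statement about $R$-points into properness of the diagonal rather than a mere statement on geometric points.
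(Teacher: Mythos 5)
Your proposal is correct and follows essentially the same route as the paper: Proposition \ref{Prop:sep} gives properness of the diagonal of $\cM^s$, which combined with affineness yields finiteness (equivalently finite inertia), and the Keel--Mori theorem \cite{KM} (the paper cites Conrad's version) then produces the separated coarse moduli space. The extra details you supply---the reduction of the valuative criterion to complete DVRs with algebraically closed residue field, and the equivalence of separatedness of $M$ with that of $\cM^s$---are points the paper's two-line proof leaves implicit, and you have filled them in correctly.
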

\begin{proof}
    By Proposition \ref{Prop:sep} we know that $\Delta \colon \cM^s\to \cM^s\times \cM^s$ is proper and we assumed it to be affine, so it is finite. Therefore by the Keel-Mori theorem \cite{KM} \cite[Theorem 1.1]{Conrad} the stack $\cM^s$ admits a coarse moduli space in the category of algebraic spaces.
    \hfill $\Box$
\end{proof}
%%%%
\subsection{The example of GIT-quotients}
%%%%
\begin{proposition}\label{Prop:X/GAlmostProper}
    Let $X$ be a proper scheme with an action of a reductive group $G$ and let $\cL$ be a $G$-linearized bundle that is numerically effective, then $([X/G],\cL)$ is an almost proper stack and $\cL$ is nef on exceptional lines.
\end{proposition}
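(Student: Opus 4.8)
The plan is to verify the two conditions of Definition \ref{Def:almost_proper} for $\cM=[X/G]$ and then to read off the nef-on-exceptional-lines property directly from numerical effectivity of $\cL$ on $X$. Throughout I use the dictionary that a morphism $T\to[X/G]$ is the same datum as a $G$-torsor $P$ on $T$ together with a $G$-equivariant map $P\to X$, equivalently a section of the proper $X$-bundle $X_P:=P\times^G X\to T$. Condition (1) is then just the valuative criterion coming from properness of $X$: given a valuation ring $R$ with fraction field $K$ and $f_K\colon \Spec K\to[X/G]$, the underlying $G$-torsor acquires a point after a finite separable extension $K^\prime/K$ since $G$ is smooth, so over $K^\prime$ the map is an honest morphism $\Spec K^\prime\to X$; this extends to a valuation ring $R^\prime$ dominating $R$ in $K^\prime$ by the valuative criterion of properness for $X$, and composing with $X\to[X/G]$ gives the required extension.

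For condition (2) write $\oST_R=[\Spec A/\bG_m]$ with $A=R[x,y]/(xy-\pi)$ and set $U:=\Spec A\setminus\{0\}$, so that $\ST_R=[U/\bG_m]$ and a morphism $f\colon \ST_R\to[X/G]$ amounts to a $\bG_m$-equivariant $G$-torsor $P$ on $U$ together with an equivariant section of $X_P\to U$. The surface $\Spec A$ is regular of dimension two and $0$ is a single closed point, of codimension two. Hence $P$ extends to a $G$-torsor $\bar P$ on all of $\Spec A$: for $G=\GL_n$ this is the fact that a vector bundle on the complement of a point of a regular surface has a reflexive, hence locally free, extension, and the general reductive case follows by purity for torsors on regular schemes. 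As this extension is canonical it carries the $\bG_m$-equivariant structure. We may then form the proper $X$-bundle $X_{\bar P}\to\Spec A$, and $\sigma$ becomes a $\bG_m$-equivariant rational section of $X_{\bar P}$ which is defined on all of $U$, so its indeterminacy is supported at $0$.

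Now I eliminate the indeterminacy. Since $X_{\bar P}\to\Spec A$ is proper and $\Spec A$ is a smooth surface, the rational section becomes a morphism after a finite sequence of blow ups of $\Spec A$ at points lying over $0$; these can be chosen $\bG_m$-equivariantly, so $\sigma$ extends to $\tilde\sigma\colon \Bl_I(\Spec A)\to X_{\bar P}$ for some invariant ideal $I$ supported at $0$. By Lemma \ref{Lem:EquivBl} there is an invariant ideal $\tilde I$ whose blow up dominates $\Bl_I$ and whose exceptional fibre is a chain of $\bG_m$-equivariant projective lines of negative weight; pulling $\tilde\sigma$ back to $\Bl_{\tilde I}(\oST_R)$ and composing with $X_{\bar P}\to[X/G]$ produces the extension $\overline f\colon \Bl_{\tilde I}(\oST_R)\to[X/G]$. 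This shows that $[X/G]$ is almost proper. For nef-on-exceptional-lines, observe that the blow up is supported at $0$, so $\tilde\sigma$ maps every exceptional line $E_i$ into the single fibre $X_{\bar P,0}$; choosing a trivialization of $\bar P$ at the closed point $0$, which exists because its residue field is algebraically closed, identifies this fibre with $X$ and exhibits $\overline f|_{E_i}$ as an honest morphism $\rho_i\colon E_i\cong\bP^1\to X$. Since $\overline f^*\cL$ restricts on $E_i$ to $\rho_i^*\cL$ and $\cL$ is numerically effective on $X$, we obtain $\deg(\cL|_{E_i})=\deg(\rho_i^*\cL)\geq 0$.

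The main obstacle is the construction in the middle two paragraphs: extending the equivariant torsor across the codimension-two point and then resolving, equivariantly and compatibly with the negative-weight chains of Lemma \ref{Lem:EquivBl}, the indeterminacy of the induced section into the proper bundle $X_{\bar P}$. Once this geometry is in place the numerical statement is essentially free, because the exceptional lines are forced into a single fibre isomorphic to $X$ and a nef bundle pulls back to a bundle of non-negative degree on any curve. I would expect the technical care to go into checking equivariance of the torsor extension and of the resolution, and into matching the ideal produced by elimination of indeterminacy with the explicit negative-weight blow ups of Lemma \ref{Lem:EquivBl}.
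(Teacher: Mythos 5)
Your proof is correct, but it takes a genuinely different route from the paper's. The paper never extends the torsor across the puncture: it lifts $j_x\circ f$ and $j_y\circ f$ to $\tilde{f}_x,\tilde{f}_y\colon \Spec R \to X$ (possible because $X \to [X/G]$ is smooth and $R$ is complete with algebraically closed residue field), reads off the gluing datum as an element $g_K\in G(K)$, and uses the Cartan decomposition $G(K)=G(R)T(K)G(R)$ to normalize $g_K$ to $\lambda(\pi)$ for a cocharacter $\lambda$; this produces an explicit $(\bG_m,\lambda)$-equivariant morphism $\Spec\big(R[x,y]/(xy-\pi)\big)-\{0\}\to X$ lifting $f$, after which properness of $X$ and Lemma \ref{Lem:EquivBl} give the equivariant extension, with nefness on exceptional lines exactly as in your last paragraph. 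Your replacement of the Cartan step by a purity argument is sound, but it is the one place where you should supply detail, since ``purity for torsors on regular schemes'' is not a general theorem: the correct two-dimensional statement is proved by embedding $G\hookrightarrow \GL_n$, noting that $\GL_n/G$ is affine because $G$ is reductive, extending the associated vector bundle reflexively (automatically locally free since $A=R[x,y]/(xy-\pi)$ is regular of dimension $2$), and extending the reduction-of-structure-group section across the codimension-$2$ point by Hartogs; the same Hartogs argument applied to the affine $\Isom$-scheme gives uniqueness of the extension, which is what actually justifies your one-line claim that the $\bG_m$-equivariant structure carries over (compare the two pullbacks to $\bG_m\times\Spec A$, whose locus of agreement has codimension-$2$ complement). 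Likewise your equivariant resolution of indeterminacy is fine because the indeterminacy locus is a finite invariant set, hence consists of $\bG_m$-fixed points at each stage. As for what each approach buys: yours is more geometric and avoids Bruhat--Tits theory over $K$, whereas the paper's trivialize-by-Cartan strategy is chosen deliberately because it transports verbatim to $\Bun_G$ and $\Bun_{\cG}$ (Propositions \ref{Prop:BunGstar} and \ref{Prop:BuncGstar}), where the Beilinson--Drinfeld Grassmannian replaces $X$ and the Cartan decomposition is applied over $k(C)$; a purity argument in that setting would require extending torsors across a codimension-$2$ locus in a three-dimensional total space, which is not available.
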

\begin{proof}
    As any morphism $\Spec K \to [X/G]$ can, after passing to a finite extension $K^\prime/K$ be lifted to $X$ the stack $[X/G]$ satisfies the first part of the valuative criterion.

    Let $f\colon ST_R =\Spec R \cup_{\Spec K} \Spec R \to [X/G]$ be a morphism. Since $R$ is complete with algebraically closed residue field and $X \to [X/G]$ is smooth we can lift $j_x \circ f$ and $j_y\circ f$ to morphisms $\tilde{f}_x,\tilde{f}_y \colon \Spec R \to X$. Now since the morphism $f$ defines an isomorphim $\phi_K \colon f_x|_K \cong f_y|_K$ there exists $g_K\in G(K)$ such that $\tilde{f}_y|_K = g_K \tilde{f}_x$.

    Using the Cartan decomposition $G(K) = G(R) T(K) G(R)$ we write $g_K=k_y\lambda(\pi)k_y$ with $k_y,k_x\in G(R)$ and some cocharacter $\lambda \colon \bG_m \to G$. Replacing $\tilde{f}_x,\tilde{f}_y$ by $k_x\tilde{f}_x,k_y^{-1}\tilde{f}_x$ respectively, we obtain may assume that $\tilde{f}_y|_k = \lambda(\pi) \tilde{f}_x$, i.e. for this choice the isomorphism $\phi_K$ is defined by the element $\lambda(\pi)\in G(K)$.

     This defines a $(\bG_m,\lambda)$-equivariant morphism  $F\colon \Spec \big(R[x,y]/(xy-\pi)\big) - \{0\}  \to X$ that we can describe explicitly by  $\lambda \times f_x \colon \bG_{m,R}=\Spec \big(R[x,x^{-1}]) \to X$ and similarly by $\lambda^{-1}\times f_y$ on $\Spec \big(R[y,y^{-1}])$, which glues because $\lambda(x)f_x =\lambda(y^{-1}\pi)f_x=\lambda^{-1}(y)f_y$ on the intersection.

     This morphism is a lift of $f$, i.e. fits into a commutative diagram
    $$\xymatrix{
        \Spec \big(R[x,y]/(xy-\pi)\big) - \{0\}  \ar[r]^-F\ar[d]& X\ar[d]\\
        [\Spec \big(R[x,y]/(xy-\pi)\big) - \{0\}/\bG_m]=\ST_R \ar[r]^-f & [X/G],
    }$$
    because taking the standard sections $s_x,s_y\colon \Spec R \to \Spec \big(R[x,y]/(xy-\pi)\big) - \{0\}$ given by $x=1$ and $y=1$ the identification of $\ST_R$ with the quotient appearing in the above diagram is induced by the morphism of groupoids
    $$[\sxymat{\Spec K \dar& \Spec R \coprod \Spec R}] \to  [\Spec \big(R[x,y]/(xy-\pi)\big) - \{0\}/\bG_m]$$ given by $\pi \in \bG_m(K),s_x,s_y$ and by construction $F$ maps $\pi$ to $\lambda(\pi)=\phi_K$.

    Since $X$ is proper the morphism $F$ extends after an equivariant blow up and since $\cL$ is nef the numerical condition will automatically be satisfied.
    \hfill $\Box$
\end{proof}
%%%%
\subsection{The example of $G$-bundles on curves}\label{Sec:Gbundles}
%%%%
\begin{proposition}\label{Prop:BunGstar}
    Let $G$ be a reductive group and $C$ a smooth projective, geometrically connected curve. The stack $\Bun_G$ is almost proper and the line bundle $\cL_{\det}$ is nef on exceptional lines.
\end{proposition}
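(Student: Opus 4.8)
The plan is to verify the two conditions of Definition \ref{Def:almost_proper} and then the nef condition, following closely the template of Proposition \ref{Prop:X/GAlmostProper}. For condition (1) I would start from a $G$-bundle $\cP_K$ on $C_K=C\times\Spec K$ and extend it to $C_{R'}$ after a finite extension $R'/R$. Since $C_R$ is a regular two-dimensional scheme, for $G=\GL_n$ one extends the bundle to any coherent sheaf on $C_R$ and passes to its reflexive hull, which is locally free on a regular surface; for general reductive $G$ I would instead use that, after a finite extension of $K$, $\cP_K$ becomes trivial on the complement of a section (Drinfeld--Simpson), so that it is encoded by a point of the affine Grassmannian $\Gr_G$, and then invoke ind-properness of $\Gr_G$ to extend over $\Spec R'$. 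As finite extensions are allowed, condition (1) holds.

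For condition (2) the input is $f\colon \ST_R\to\Bun_G$, that is, two $G$-bundles $\cP_x,\cP_y$ on $C_R$ together with an isomorphism over $C_K$. In the proof for $[X/G]$ the gluing was encoded by a single $g_K\in G(K)$ put into Cartan normal form $\lambda(\pi)$; here the isomorphism exhibits $\cP_y$ as a modification of $\cP_x$ supported on the special fibre $C_k$, governed by a cocharacter at the generic point of $C_k$ (the analogue of $\lambda(\pi)$) together with further modifications at finitely many closed points $c_1,\dots,c_m\in C_k$. By Beauville--Laszlo the datum is local around these points, and at each $c_i$ the two lattices are related by elementary-divisor data, i.e.\ a cocharacter $\lambda_i$. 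Applying the Rees construction of Lemma \ref{Lem:ReesG} at these points produces a $\bG_m$-equivariant $G$-bundle on $C\times\bigl(\Spec R[x,y]/(xy-\pi)\bigr)$ away from the origin that restricts to $\cP_x,\cP_y$ on the two charts; the obstruction to extending it across $[0/\bG_m]$ is exactly a $\bG_m$-invariant ideal of the type treated in Lemma \ref{Lem:EquivBl}, so after the equivariant blow-up of that lemma the morphism extends to $\overline f\colon \Bl_{\tilde I}(\oST_R)\to\Bun_G$ whose exceptional fibre is a chain of $\bG_m$-equivariant projective lines of negative weight.

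It then remains to check that $\cL_{\det}$ is nef on each exceptional line $E_i\cong\bP^1$. Restricting the family gives a $G$-bundle $\cP_{E_i}$ on $C\times E_i$, and by Grothendieck--Riemann--Roch, using that the adjoint representation is trace-free so that $c_1(\Ad\cP_{E_i})=0$ and the cross terms with $\Todd(C)$ drop out, one finds
\[
\deg\bigl(\cL_{\det}|_{E_i}\bigr)\;=\;\int_{C\times E_i} c_2(\Ad\cP_{E_i}).
\]
Equivalently, via the weight formula for line bundles on $[\bP^1/(\bG_m,\act_w)]$, one has $\deg(\cL_{\det}|_{E_i})=\frac{1}{w_i}\bigl(\wt_\infty-\wt_0\bigr)$ with $w_i<0$, the weights at the two fixed points being computed exactly as in the very-close-degeneration calculation of the previous section. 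I would show this quantity is $\ge 0$: because the families on the $E_i$ arise as Hecke-type modifications interpolating two lattices (rather than from a destabilising Harder--Narasimhan filtration), the self-duality of $\Ad$ under the Killing form forces the second Chern number to be non-negative; in weight terms this is the statement that the weight of $\cL_{\det}$ at the repellent fixed point dominates the weight at the attracting one.

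The main obstacle is precisely this last non-negativity. Unlike the $[X/G]$ case, where nef-ness of $\cL$ on $X$ was a hypothesis transported for free, $\cL_{\det}$ is \emph{not} nef on all of $\Bun_G$ (stable bundles have strictly negative weight), so the correct sign must be extracted from the special geometry of the exceptional lines. The delicate point is to confirm that the blow-up of Lemma \ref{Lem:EquivBl} can be chosen so that every exceptional line is of modification type with $\int_{C\times E_i}c_2(\Ad\cP_{E_i})\ge 0$, i.e.\ that no exceptional line secretly realises a weight-increasing filtration; I expect this to need careful bookkeeping of the cocharacters $\lambda_i$ against the negative-weight normalisation of the chain, combined with the identity $\deg(\cL_{\det}|_{E_i})=\int c_2(\Ad\cP_{E_i})$ above. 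By contrast, condition (1) and the construction in condition (2) should be routine adaptations of the arguments already given for $[X/G]$ and of the Rees machinery of Section 1.
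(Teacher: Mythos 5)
Your outline stalls exactly at the point you flag as delicate, and the gap there is genuine. The inequality $\deg(\cL_{\det}|_{E_i})\geq 0$ cannot be extracted from $c_1(\Ad\cP_{E_i})=0$ plus self-duality of $\Ad$ under the Killing form: for a bundle on the surface $C\times E_i$ these facts constrain $c_1$ but say nothing about the sign of $c_2$, and ``the families arise as Hecke-type modifications'' is precisely the statement that must be made rigorous before it yields positivity. The paper makes it rigorous in a way that renders the whole Chern-class computation unnecessary: it replaces the projective atlas $X$ of Proposition \ref{Prop:X/GAlmostProper} by the Beilinson--Drinfeld Grassmannian $p\colon \GR_G\to\Bun_G$, an inductive limit of projective schemes on which $\cL_{\det}$ is \emph{relatively ample} over $\coprod_d C^{(d)}$. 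The $\bG_m$-equivariant map $\Spec\big(R[x,y]/(xy-\pi)\big)-\{0\}\to\GR_G$ is then extended after an equivariant blow-up using ind-projectivity of $\GR_G$ (this properness, not Lemma \ref{Lem:EquivBl} alone, is what produces the extension; the lemma only refines the blow-up so that the exceptional fibre is a negative-weight chain), and since the $\bG_m$-action preserves the support map $\GR_G\to C^{(d)}$, each exceptional line $E_i$ lands in a fibre of that map, where relative ampleness of $\cL_{\det}$ gives $\deg(\cL_{\det}|_{E_i})\geq 0$ for free. This is the key idea your proposal is missing: nef-ness on exceptional lines is inherited from an ample bundle on an ind-projective atlas, not proved by an intersection-theoretic estimate on $\Bun_G$ itself, where, as you correctly observe, $\cL_{\det}$ is certainly not nef.

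There is a second defect in your step (2). A modification of $G$-bundles on $C_R$ supported on the special fibre is governed by elementary-divisor (cocharacter) data only along divisors, i.e.\ in codimension one; at the finitely many closed points $c_1,\dots,c_m$ where the gluing degenerates further there is no cocharacter $\lambda_i$ classifying the local behaviour, so the Rees construction cannot be ``applied at these points.'' The paper's normalization is instead global along the special fibre: after lifting both bundles to $\GR_G$ one trivializes away from a horizontal divisor $D$, the gluing becomes an element $g\in G(K(C))$, and one applies the Cartan decomposition for the field $K(C)$ equipped with the discrete valuation induced by $R$ (the generic point of $C_k$), writing $g=k_1\lambda(\pi)k_2$ with $k_i\in G(\cO_K)$. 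The $k_i$ spread out over an open set dense in the special fibre, so after enlarging $D$ one may assume $g=\lambda(\pi)$ with a \emph{single} cocharacter; the residual bad points are absorbed into $D$ and handled automatically by the extension into $\GR_G$. Your treatment of condition (1) is fine and consistent with what the same atlas gives in the paper.
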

\begin{proof}
    We follow the same strategy as for GIT-quotients, replacing the projective atlas $X$ by the Beilinson-Drinfeld Grassmannian $p\colon \GR_G \to \Bun_G$, i.e.,
    $$\GR_G(S) = \left\{ (\cE,D,\phi) \left| { \cE\in \Bun_G(S), D\in C^{(d)}(S) \text{ for some $d$} \atop \phi \colon \cE|_{C\times S - D} \map{\cong} G \times (C\times S - D) }\right.\right\}.$$
    It is known that $\GR_G$ is the inductive limit of projective schemes, that $\cL_{\det}$ defines a line bundle on $\GR_G$ that is relatively ample with respect to the morphism to $\coprod_d C^{(d)}$ and that the forgetful map $\GR_G \to \Bun_G$ is formally smooth. Moreover this morphism admits sections locally in the flat topology (\cite[Section 5.3]{BD}, \cite{Fal}).

    To show that $\Bun_G$ is almost proper take $f\colon ST_R \to \Bun_G$. After extending $k$ we may assume that $R=k\pbl \pi\pbr $. This defines bundles $\cE_x,\cE_y$ on $C \times \Spec R$ together with an isomorphism $\cE_x|_{C\times \Spec K}\cong \cE_x|_{C\times \Spec K}$. By the properties of $\GR_G$ we can find lifts $\tilde{f}_x,\tilde{f}_y \colon  \Spec R \to \GR_G$. In particular we find a divisor $D=D_x \cup D_y\subset C_R$ such that
    $\cE_x|_{C_R-D}\cong G \times (C_R-D) \cong \cE_y|_{C_R - D}$. Through $\tilde{f}_x,\tilde{f}_y$ the isomorphism $\cE_x|_{C_K} \cong \cE_y|_{C_K}$ therefore defines an element $g\in G(C_K-D) \subset G(K(C))$. As in Proposition \ref{Prop:X/GAlmostProper}, we would like to apply Cartan decomposition now for the field $K(C)$ that comes equipped with a discrete valuation induced by the valuation of $R$, i.e. the valuation given by the codimension $1$ point $\Spec k(C) \in C_R$. Its ring of integers $\cO_K$ are meromorphic functions on $C_K$ that extend to an open subset on the special fiber $C_k$.

    Using this we can write $g=k_1 \lambda(\pi) k_2$ with $k_i\in G(\cO_K)$. Now each of the $k_i$ defines an element $k_i\in G(U)$ for some open subset $U\subset C_R$ that is dense in the special fiber, so after enlarging $D$ we may assume $k_i\in G(C_R -D)$.

    The elements $k_i$ allow us to modify the lifts $\tilde{f}_x,\tilde{f}_y$ such that with respect to these new maps we find $g=\lambda(\pi)$.

    As before this datum defines a $\bG_m,\lambda$-equivariant morphism $$\Spec(R[x,y]/(xy-\pi)) -\{0\} \to \GR_G$$ and by ind-projectivity this can be extended after a suitable blow up to a $\bG_m,\lambda$-equivariant morphism, which defines $\Bl_{I}(\oST) \to \Bun_G$. Finally, as the $\bG_m$-action preserves the forgetful map $\GR_G \to C^{(d)}$ and $\cL_{\det}$ is relatively ample with respect to this morphism we see that $\cL_{\det}$ is nef on exceptional lines.~\hfill $\Box$
\end{proof}

%%%%%%%%%%%%%%%%%%%%%%%%%%%%%%%
\section{Torsors under parahoric group schemes on curves}\label{Sec:Parahoric}
%%%%%%%%%%%%%%%%%%%%%%%%%%%%%%%
In this section we give our main application to moduli of torsors
under Bruhat-Tits group schemes on curves as introduced by Pappas
and Rapoport \cite{PR}. It will turn out that the notion of
stability we find is a variant of the one introduced by Balaji and
Seshadri in the case of generically split group schemes. We will
then apply this to construct coarse moduli spaces for stable torsors
over fields of arbitrary characteristic.

%Given the examples of the previous sections this seems to be a straightforward problem. %However, we will need to spell out the notion of Harder-Narasimhan reductions in order to prove that the stack of stable torsors is an open substack of the stack of all torsors.

%In \cite{DHL} Daniel Halpern-Leistner gives a general construction of Harder-Narasimhan stratifications for moduli problems. Unfortunately, so far these rely on his notion of $\Theta$-reductivity. As we so far do not know how to embed our moduli stacks into $\Theta$-reductive stacks we have to give some more ad hoc arguments.

%In order to be closer to the notions introduced in \cite{DHL} I tried to adjust this section a bit from the original version which was to copy Behrend's approach, that has already been adapted to parabolic bundles in \cite{HS}.

%%%%
\subsection{The setup}
%%%%
We fix a smooth projective, geometrically connected curve $C/k$ and
$\cG \to C$ a parahoric Bruhat-Tits group scheme in the sense of
\cite{PR}, i.e., $\cG$ is a smooth affine group scheme with
geometrically connected fibers, such that there is an open dense
subset $U\subset C$ such that $\cG|_U$ is reductive and such that
for all $p \in C-U$ the restriction $\cG|_{\Spec \cO_{C,p}}$ is a
parahoric group scheme as in \cite{BT2} (see Appendix
\ref{AppendixBT} for details). We will denote by $\Ram(\cG) \subset
C$ the finite set of closed points for which the fiber $\cG_x$ is
not a reductive group.

We will denote by $\Bun_{\cG}$ the moduli stack of $\cG$-torsors on
$C$. As usual we will often denote base extensions by an index, i.e.
for a $k$-scheme $X$ we abbreviate $X_C:= X \times C$.

\begin{example}{\rm
    It may be helpful to keep the following examples in mind:
    \begin{enumerate}
        \item[\rm (1)] (Parabolic structures) Let $G/k$ be a reductive group, $B\subset G$ be a Borel subgroup and $p_1,\dots,p_n\in C(k)$ rational points. We define $\cG_{\un{p},B}$ to be the smooth groupscheme over $C$ that comes equipped with a morphism $\cG_{\un{p},B} \to G_C$ such that for all $i$ the image
        $\cG_{\un{p},B}(\cO_{C,p_i}) = \{ g\in G(\cO_{C,p_i}) | g \mod p_i \in B \} $
        is the Iwahori subgroup. Since $\cG_{\un{p},B}(\cO_{C,p_i})$ is the subgroup of automorphism group of the trivial $G$ torsor that fixes the Borel subgroups $B\subset G \times p_i$ torsors under this group scheme are $G$-bundles equipped with a reduction to $B$ at the points $p_i$.

        \item[\rm (2)] (The unitary group) Suppose $\Char(k)\neq 2$ and let $\pi \colon \tilde{C} \to C$ is a possibly ramified $\bZ/2\bZ$-covering then the group scheme $\pi_* \GL_{n,\tilde{C}}$ admits an automorphism, given by the $()^{t,-1}$ on the group and the natural action on the coefficients $\cO_{\tilde{C}}$. The invariants with respect to this action is called  the unitary group for the covering. Torsors under this group scheme can be viewed as vector bundles on $\tilde{C}$ that under the involution become isomorphic to their dual.
    \end{enumerate}}
\end{example}

\subsection{Line bundles on $\Bun_{\cG}$}\label{PicG}

As observed by Pappas and Rapoport (\cite{PR},\cite{Unif}) there are
many natural line bundles on $\Bun_{\cG}$:

\begin{enumerate}
\item[\rm (1)] We define $\cL_{\det}$ to be the determinant line bundle given by $$\cL_{\det}|_{\cE}:= \det\big(H^*(C,\Ad(\cE)\big)^{-1}.$$
%i.e., this is the pull back $\ad^*(\cL_{\det})$ of the determinant of cohomology via $\ad\colon \Bun_{\cG} \to \Bun_{\dim \cG/C}$.
% is the morphism $\cE\to \cE \times^{\cG} \Lie(\cG/C)$.
%{\tt It may in the end turn out to be more convenient to fix a special parahoric $\cG_0$ that together with a homomorphism $\cG \to \cG_0$ that is an isomorphism over $C-\Ram{\cG}$ and pull back $\ad(\cG_0)$. This would be closer to the usual stability conditions, because then choosing the parabolic weights to be $0$ would give back $\cG_0$ bundles. It may even be helpful to assume that $\cG$ is quasisplit, so that we can fix a torus defining a special vertex everywere.}
\item[\rm (2)] For every $x\in \Ram(\cG)$ we have a homomorphism $X^*(\cG_x) \incl{} \Pic(\Bun_{\cG})$ induced from the pull back via the canonical map $\Bun_{\cG} \to B \cG_x$ given by $\cE \mapsto \cE|_x$ and the canonical morphism
$$X^*(\cG_x) =\Hom(\cG_x,\bG_m) \to \Mor(B \cG_x,B\bG_m)\cong \Pic(B \cG_x),$$
which is surjective on isomorphism classes. We write $\cL_{\chi_x}$
for the line bundle corresponding to $\chi_x \in X^*(\cG_{x})$.
\item[\rm (3)] We will abbreviate $$\cL_{\det,\un{\chi}}:= \cL_{\det} \tensor \bigotimes_{x\in \Ram(\cG)} \cL_{\chi_x}.$$
\end{enumerate}

As before, positivity of $\cL$ will be checked on affine
Grassmannians. Let us fix the notation. For a point $x\in X$ we
denote by $\Gr_{\cG,x}$ the ind-projective scheme classifying
$\cG$-torsors on $X$ together with a trivialization on $C-\{x\}$.
Its $\kbar$-points are
$\cG(K_{\overline{x}})/\cG(\widehat{\cO}_{\overline{x}})$. It comes
with a forgetful map $$\glue_x \colon \Gr_{\cG,x} \to
\Bun_{\cG_x}.$$

By definition, the bundles obtained from $\glue_x$ are canonically
trivial outside $x$, so the bundles $\cL_{\chi_x}$ pull back to the
trivial line bundle on $\Gr_{\cG_y}$ for $y\neq x$.

To check that $\cL$ is nef on exceptional lines, we will need a line
bundle $\cL=\cL_{\det,\un{\chi}}$ such that for all $x\in X$ the
bundle pulls back to a positive line bundle on the corresponding
affine Grassmannian.

\begin{remark}{\rm 
    If $\cG_\eta$ is simply connected, absolutely almost simple and splits over a tame extension the positivity condition can be given explicitly, as for example computed in \cite[Section 4]{Zhu}.
    As this requires some more notation we only note that $\cL_{\det}$ always satisfies this numerical condition as this is the pull back of a determinant line bundle on a Grassmanian $\Gr_{\GL_N,x}$.}

%   If $\cG$ is general, but splits over a tame extension one can do a similar argument, because the determinant line bundle only depends on the adjoint representation and from this one can reduce to the case that $\cG$ is semi-simple. And from there one can pass to a simply connected covering. However, checking the compatibility of the connected Neron models for the tori. (TODO
\end{remark}

The proof of Proposition \ref{Prop:BunGstar} now applies to $\cG$,
as the proof only uses a group theoretic decomposition at the
generic point of $C$ where $\cG$ is reductive. We therefore find:

\begin{proposition}\label{Prop:BuncGstar}
    Let $\cG$ be a parahoric Bruhat--Tits group on $C$ and let $\cL_{\det,\un{\chi}}$ be chosen such that for all $x\in \Ram(\cG)$ the bundle $\glue_x^*\cL_{\det,\un{\chi}}$ is nef on $\Gr_{\cG,x}$. Then the pair $(\Bun_{\cG},\cL_{\det,\un{\chi}})$ satisfies the valuative criterion ($\star$).
\end{proposition}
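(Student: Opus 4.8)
The plan is to closely mirror the proof of Proposition~\ref{Prop:BunGstar}, observing that the crucial group-theoretic input there—namely the Cartan decomposition $g = k_1\lambda(\pi)k_2$ at the generic point of $C$—only uses the reductivity of the group at the generic point $\eta$ of $C$, which is exactly the hypothesis that $\cG|_U$ is reductive on a dense open $U$. First I would note that, as in Proposition~\ref{Prop:BunGstar}, one replaces the atlas by the affine Grassmannian: here the Beilinson--Drinfeld Grassmannian $\GR_{\cG} \to \Bun_{\cG}$ for the parahoric group scheme $\cG$, which is again an inductive limit of projective schemes, on which $\cL_{\det,\un{\chi}}$ is relatively ample over $\coprod_d C^{(d)}$ (by our choice of $\un{\chi}$), and the forgetful morphism to $\Bun_{\cG}$ is formally smooth with sections locally in the flat topology. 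These facts for parahoric group schemes are established in \cite{PR} and \cite{Unif}.

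Given $f\colon \ST_R \to \Bun_{\cG}$, after extending $k$ I would assume $R = k\pbl \pi \pbr$ and lift $j_x\circ f, j_y\circ f$ to $\tilde f_x, \tilde f_y\colon \Spec R \to \GR_{\cG}$, obtaining $\cG$-torsors $\cE_x,\cE_y$ on $C_R$ that are trivialized away from a divisor $D\subset C_R$, together with an isomorphism over $C_K$. This gluing datum defines an element $g$ in $\cG(C_K - D) \subset \cG(K(C))$. The key point is that since $\cG$ is reductive at the generic point of $C$, the valuation on $K(C)$ coming from the codimension-one point $\Spec k(C)\in C_R$ (with ring of integers $\cO_K$ the meromorphic functions on $C_K$ extending over an open of the special fiber) supports a Cartan decomposition $g = k_1\lambda(\pi)k_2$ with $k_i\in \cG(\cO_K)$ and $\lambda$ a cocharacter into $\cG_\eta$. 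Absorbing the $k_i$ into the lifts $\tilde f_x,\tilde f_y$ (after enlarging $D$ so that $k_i \in \cG(C_R - D)$) reduces to the case $g = \lambda(\pi)$, yielding a $(\bG_m,\lambda)$-equivariant morphism $\Spec(R[x,y]/(xy-\pi)) - \{0\} \to \GR_{\cG}$ lifting $f$.

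To finish, I would extend this morphism after a suitable equivariant blow up of $\oST_R$ supported at $0$, using the ind-projectivity of $\GR_{\cG}$, exactly as in the reductive case; this gives condition (2) of almost properness. Condition (1) is immediate, since any $\Spec K \to \Bun_{\cG}$ lifts to $\GR_{\cG}$ after a finite extension and $\GR_{\cG}$ is ind-projective. For nefness on exceptional lines, the $\bG_m$-action preserves the forgetful map $\GR_{\cG} \to C^{(d)}$, so since $\cL_{\det,\un{\chi}}$ restricts to a line bundle relatively ample over $C^{(d)}$ by hypothesis, its degree on each equivariant $\bP^1$ in the exceptional fiber is $\geq 0$.

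The main obstacle is verifying that the Cartan decomposition is genuinely available for the \emph{parahoric} group scheme at the relevant valuation. The delicate point is that the decomposition is needed only generically, where $\cG$ is reductive, so what one really must check is that the lifts $k_i$, which a priori live only generically on the special fiber, can be spread out to sections over a suitable open $U\subset C_R$ dense in the special fiber—this is what lets us absorb them by enlarging $D$. Since the paper explicitly asserts that ``the proof of Proposition~\ref{Prop:BunGstar} now applies to $\cG$, as the proof only uses a group theoretic decomposition at the generic point of $C$ where $\cG$ is reductive,'' I would record precisely this observation and invoke the representability and ampleness properties of $\GR_{\cG}$ from \cite{PR} and \cite{Unif}, rather than reproving them.
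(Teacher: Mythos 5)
Your proposal is correct and follows essentially the same route as the paper, which itself simply observes that the proof of Proposition~\ref{Prop:BunGstar} carries over verbatim since the Cartan decomposition is only used at the generic point of $C$, where $\cG$ is reductive; your expansion of that remark (lifting to $\GR_{\cG}$, absorbing the $k_i$ after enlarging $D$, extending via ind-projectivity after an equivariant blow up) matches the intended argument. One small wording caveat: the hypothesis on $\cL_{\det,\un{\chi}}$ is nefness of $\glue_x^*\cL_{\det,\un{\chi}}$ on the affine Grassmannians rather than relative ampleness over $C^{(d)}$, but nefness is exactly what is needed to conclude $\deg(\cL|_{E_i})\geq 0$ on the exceptional lines, so your conclusion stands.
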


To obtain coarse moduli spaces we now have to show that the stable
locus is an open subset of finite type. For this we will need
analogs of the basic results on stability for $\cG$-torsors. To do
this we first need to rephrase $\cL_{\det,\un{\chi}}$-stability in
terms of reductions of structure groups.
%%%%
\subsection{Preliminaries on parabolic subgroups of Bruhat--Tits group schemes}
%%%%
As before, very close degenerations of $\cG$-bundles will give us
cocharacters $\bG_{m,C} \to \Aut_{\cG/C}(\cE)=:\cG^{\cE}$. In order
to describe these in terms of reductions of structure group we first
need some general results on cocharacters and analogs parabolic
subgroups of Bruhat--Tits group schemes.

%\begin{remark}
%Note that in general a Bruhat-Tits group scheme will not admit any non-tirival global cocharacter $\bG_m \times C \to \cG$. For example if $\cE$ is an indecomposable vector bundle with trivial determinant $\cG= \Aut(\cE,\det(\cE)\cong\cO)$ is the group of autormorphisms of $\cE$, any non-trivial cocharacter of $\cG$ would define a decomposition $\cE= \oplus \cE_i$ contradicting that $\cE$ is indecomposable.
%
%Of course, locally on $C$ the bundle $\cE$ is trivial, so locally the group $\cG$ admits many cocharacters.
%\end{remark}

Let us first consider the local situation: Let $R$ be a discrete
valuation ring with fraction field $K$, $\pi\in R$ a uniformizer and
$k=R/(\pi)$ the residue field. Let  $\cG \to \Spec R$  parahoric
Bruhat-Tits group scheme.

Given $\lambda \colon \bG_{m,R} \to \cG$  we denote by
\begin{enumerate}
    \item[\rm (1)] $\cP_{\lambda}(S) := \{ g\in \cG(S) | \lim_{t\to 0} \lambda(t)g\lambda(t)^{-1} \text{ exists in }\cG(S) \}$ the concentrator scheme of the action of $\bG_m$.
    \item[\rm (2)] $\cL_{\lambda}(S) := \{ g\in \cG(S) | \lambda(t)g\lambda(t)^{-1} = g \}$ the centralizer of $\lambda$.
    \item[\rm (3)] $\cU_{\lambda}(S):=  \{ g\in \cG(S) | \lim_{t\to 0} \lambda(t)g\lambda(t)^{-1} = 1 \}$
%   {\tt This could alternatively be defined in terms of algebra $\dots$ }
\end{enumerate}
These are analogs of parabolic subgroups and Levi subgroups of
$\cG$. Alternatively one could define such analogs by taking the
closure of parabolics in the generic fiber $\cG_K$. The following
Lemma shows that this leads to an equivalent notion:

\begin{lemma}\label{parabolicBT}
Let $R$ be a discrete valuation ring and $\cG_R$ a parahoric
Bruhat--Tits group scheme over $\Spec R$.
\begin{enumerate}
    \item[\rm (1)] Given a 1-parameter subgroup $\lambda \colon \bG_{m,R} \to \cG$ the group $\cP_{\lambda}$ is the closure of $\cP_{K,\lambda} \subset \cG_K$, $\cL_{\lambda}$ is the closure of the Levi subgroup  $\cL_{K,\lambda} \subset \cG_K$ and $\cU_\lambda$ is the closure of the unipotent radical of $\cP_{K,\lambda}$. The group $\cL_\lambda$ is again a Bruhat--Tits group scheme.
    \item[\rm (2)] Let $P_K\subset \cG_K$ be a parabolic subgroup and denote by $\cP \subset \cG$ the closure of $P_K$ in $\cG$. Then there exists a 1-parameter subgroup $\lambda \colon \bG_{m,R} \to \cG_R$ such that $\cP=\cP_{\lambda}$.
    \item[\rm (3)] Let $P_K\subset \cG_K$ be a parabolic subgroup and $\lambda_K \colon \bG_{m,K} \to \cG_K$ be a cocharacter such that $P_K=P_{\lambda_K}$. Then there exist an element $u\in \cU(K)$ such that $\lambda^u_K:= u\lambda_K u^{-1}$ extends to a morphism $\lambda^u \colon \bG_m \to \cP$. The class of $u$ in $\cU(K)/\cU(R)$ is uniquely determined by $\lambda_K$.
\end{enumerate}
\end{lemma}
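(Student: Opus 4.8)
The plan is to treat the three parts in the order (1), (3), (2): part (2) will fall out of (1) and (3), so the real content sits in (3) together with the ``$\cL_\lambda$ is again Bruhat--Tits'' clause of (1). Throughout I write $\lambda_K,\lambda_k$ for the restrictions of a cocharacter $\lambda$ over $R$ to the two fibres.

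For (1), I would first observe that conjugation $\bG_{m,R}\times_R\cG\to\cG$, $(t,g)\mapsto\lambda(t)g\lambda(t)^{-1}$, is an action of $\bG_{m,R}$ on the smooth affine $R$-scheme $\cG$, so by \cite[Proposition 4.2]{Hesselink} (the same input used in Lemma \ref{Lem:ReesG}) the attractor functor defining $\cP_\lambda$ is represented by a closed $R$-subscheme and the limit morphism $\gr_\lambda\colon\cP_\lambda\to\cL_\lambda$ exists. Since $\Lie\cG$ is a $\bG_m$-representation over the PID $R$ it splits into free weight pieces, which exhibits $\cP_\lambda$ as an affine-space bundle over the fixed-point group $\cL_\lambda$ and identifies $\Lie\cP_\lambda,\Lie\cL_\lambda,\Lie\cU_\lambda$ with the non-negative, zero and positive weight parts; hence $\cP_\lambda,\cL_\lambda,\cU_\lambda$ are smooth over $R$ (so $R$-flat) with $\cP_\lambda=\cU_\lambda\rtimes\cL_\lambda$. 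Being closed and $R$-flat, each equals the schematic closure of its generic fibre, and over the field $K$ the dynamic description of parabolics (\cite{SGA3}, Exp.\ XXVI) identifies those generic fibres with $P_{\lambda_K}$, its Levi $L_{\lambda_K}$ and its unipotent radical $U_{\lambda_K}$. This proves everything in (1) except that $\cL_\lambda$ is Bruhat--Tits. For that I would identify $\cL_\lambda(R)=\cG(R)\cap L_{\lambda_K}(K)$ with the parahoric subgroup of the connected reductive group $L_{\lambda_K}/K$ attached to the facet $\mathfrak f$ defining $\cG_R$: the building $\cB(L_{\lambda_K},K)$ embeds into $\cB(\cG_K,K)$, the point $\mathfrak f$ lies in its image, and the connected stabiliser of $\mathfrak f$ in $L_{\lambda_K}(K)$ is the desired parahoric (\cite{BT2}); since $\cL_\lambda$ is smooth affine with connected fibres and reductive generic fibre, it is the associated Bruhat--Tits group scheme.

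For (3) I would reduce to a maximal $K$-split torus $S_K\subset\cG_K$ with $\lambda_K\in X_*(S_K)$, so $S_K\subset L_{\lambda_K}=\Centr_{\cG_K}(\lambda_K)$. The cocharacters $\lambda'$ with $P_{\lambda'}=P_K$ and fixed image in $X_*(L_{\lambda_K}^{\mathrm{ab}})$ form a torsor under $\cU(K)$-conjugation, because $U_{\lambda_K}(K)$ permutes the Levi factors of $P_K$ simply transitively. The substance is to produce one such $\lambda'$ that extends over $R$: invoking the sector lemma of Bruhat--Tits (\cite{BT2}) I would choose an apartment containing both $\mathfrak f$ and the chamber at infinity determined by $P_K$; its maximal split torus $S_K'$ is $\cU(K)$-conjugate to $S_K$, its standard $R$-model lies in $\cG_R$, and the transported cocharacter then extends to $\lambda^u\colon\bG_{m,R}\to\cP$. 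Uniqueness of $u$ modulo $\cU(R)$ follows since the $R$-rational representatives constitute a single $\cU(R)$-orbit (for $\cG=\GL_n$ this is exactly the statement that the integral complements to a fixed saturated summand of $R^n$ form a torsor under $\cU(R)=U_{P_K}(R)$).

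Finally (2) is immediate: writing $P_K=P_{\lambda_K}$ and applying (3) gives a cocharacter $\lambda:=\lambda^u\colon\bG_{m,R}\to\cP\subset\cG_R$ over $R$, and by (1) the subgroup $\cP_\lambda$ is the schematic closure of its generic fibre; as $u\in\cU(K)\subset P_K(K)$ we get $P_{\lambda_K^u}=uP_Ku^{-1}=P_K$, whence $\cP_\lambda=\overline{P_K}=\cP$. The main obstacle is the existence assertion in (3): the representability, smoothness and closure steps are formal, but producing an $R$-rational conjugate of $\lambda_K$ genuinely requires Bruhat--Tits building theory (the compatibility of the facet $\mathfrak f$ with the parabolic at infinity), and the very same building input is what underlies the claim in (1) that $\cL_\lambda$ is parahoric.
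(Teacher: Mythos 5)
Your overall architecture matches the paper's proof quite closely: part (1) via dynamic methods for the $\bG_m$-conjugation action (the paper also invokes \cite{Hesselink}, using Theorem 5.8 for regularity of the fixed-point and concentrator schemes plus a tangent-lifting argument for smoothness over $R$, where you use the weight splitting of $\Lie\cG$ over the PID $R$ --- both are fine), existence in (3) by moving $\lambda_K$ into a maximal split torus whose $R$-model sits inside $\cG_R$, and (2) as a formal consequence. Your building-theoretic route to existence (an apartment containing the facet defining $\cG_R$ and the chamber at infinity of $P_K$) is the geometric counterpart of what the paper does group-theoretically via the Iwasawa decomposition $\cG(K)=\cB(K)W\cG(R)$ of \cite{BT1}: the paper conjugates $\lambda_K$ into a fixed maximal split torus $\cT_R\subset\cG_R$, writes the conjugating element as $kwu$ with $k\in\cG(R)$, $u\in\cU(K)$, and absorbs $k$ and $w$; the two arguments are interchangeable.

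The genuine gap is the uniqueness half of (3). Your justification --- ``the $R$-rational representatives constitute a single $\cU(R)$-orbit'' --- is a restatement of the claim, not a proof, and the $\GL_n$ lattice-complement picture neither covers general parahoric level (Iwahori, ramified unitary groups) nor comes with a reduction of the general case to it. The paper supplies an actual argument here: writing $u=u_1\cdots u_r$ along the weight filtration $\cU_{K,\geq r}\subset\dots\subset\cU_{K,\geq 1}=\cU_K$ with graded pieces $\prod_{\alpha\circ\lambda=i}\cU_\alpha$, the hypothesis that $u\lambda u^{-1}$ is defined over $R$ forces $u\lambda(a)u^{-1}\lambda(a)^{-1}\in\cU(K)\cap\cG(R)=\cU(R)$; its image in $\cU_{\geq 1}/\cU_{\geq 2}$ scales $u_1$ by the $\lambda$-weight, which pins $u_1\in\cU(R)$, and one concludes by induction on the filtration. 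Some such argument must be supplied, since this uniqueness is exactly what makes the inner twist $\cU^u$ in Lemma \ref{ReesBT} canonical. Two smaller points: the asserted embedding $\cB(L_{\lambda_K},K)\hookrightarrow\cB(\cG_K,K)$ is not correct as stated for reduced buildings (the apartments have different dimensions); the paper avoids this by reducing \'etale-locally to the quasi-split case, conjugating $\lambda$ into the connected N\'eron model of a maximal torus, and restricting the valued root datum of $\cG$ to the roots of $\cL_{\lambda,K}$. And your final step for the Bruhat--Tits property needs the connectedness of the special fibre of $\cL_\lambda$ (it is the centralizer of a torus in a smooth connected group), which is what allows one to conclude that the smooth model $\cL_\lambda$ coincides with the Bruhat--Tits model it contains; you assert connected fibres without argument.
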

Before proving the lemma let us note that part (3) will be useful to
define a Rees construction for $\cG$-bundles. In the global setup of
a group scheme $\cG/C$ on a curve we cannot expect that every
parabolic subgroup $P_{k(C)} \subset \cG_{k(C)}$ can be defined by a
globally defined cocharacter $\bG_{m,C} \to \cG$, as $\cG$ may not
admit any non-trivial cocharacters. However, given $\lambda_{k(C)}$
part (3) will give us a canonical inner form of $\cG$ for which
$\lambda$ extends.
\begin{remark}\label{Rem:FinitelyConjP}{\rm
    In the setup of the above lemma given two parabolic subrgoups $\cP_K,\cP^\prime_K\subset \cG_K$ that are conjugate over $\cG_K$, their closures $\cP,\cP^\prime\subset \cG$ need not be conjugate. Roughly this is because the closure contains information about the relative position of the generic parabolic and the parahoric structure in the special fiber. More precisely, the Iwasawa decomposition we know that for any Borel subgroup $\cB_K\subset \cG_K$  we have $\cG(K)=\cB(K) W \cG(R)$ (\cite[Propositions 4.4.3, 7.3.1]{BT1}). This implies that there are only finitely many conjugacy classes $\cP$ of closures of generic parabolic subgroups.}
\end{remark}

\begin{proof}[Proof of Lemma \ref{parabolicBT}.]
    To show the first part of (1) we have to show that the generic fibers of $\cL_{\lambda},\cU_\lambda$ and $\cP_{\lambda}$ are dense.
    %{\tt I assume that $R$ is a $k$-algebra. This should not be necessary}
    As $\cG$ is smooth over $R$ the fixed point scheme $\cL_{\lambda}$ and the concentrator scheme $\cP_{\lambda}$ are both regular (\cite[Theorem 5.8]{Hesselink} ). Let $x\in \cL_\lambda(R/(\pi))\subset \cG(R/(\pi))$ be a closed point of the special fiber then $T_{\cG,x} \to T_{\Spec R,0}$ is surjective and equivariant, so there exists an invariant tangent vector lifting the tangent direction in $0$. Thus $\cL_\lambda$ is smooth over $R$ and therefore the generic fiber is dense. The morphism $\cP_{\lambda} \to \cL_{\lambda}$ is an affine bundle (\cite[Theorem 5.8]{Hesselink}), so the generic fiber of $\cP_\lambda$ must also be dense and $\cU_\lambda$ is even an affine bundle over $\Spec R$.

    Let us prove (2) and (3). Any parabolic subgroup $P_K$ of the reductive group $\cG_K$ is of the form $P_K=P_K(\lambda)$ for some $\lambda\colon \bG_{m,K} \to \cG_K$ \cite[Lemma 15.1.2]{Springer}. The image of $\lambda$ is contained in a maximal split torus of $\cG_K$ and these are all conjugate over $K$ \cite[Theorem 15.2.6]{Springer}. Fixing a maximal split torus $\cT_R \subset \cG_R$ we therefore find $g\in G(K)$ such that $g\lambda g^{-1} \colon \bG_m \to \cG_K$ factors through $\cT$. By Iwasawa decomposition we can write $g= kwu$ with $k\in \cG(R),w\in N(\cT),p\in \cU(K)$. Thus we can conjugate $\lambda$ by an element of $\cU(K)$ such that it extends to $\cG_R$. Applying (1) to this subgroup we find (2). It also shows the existence statement in (3). To show uniqueness assume that $\lambda \colon \bG_{m,R} \to \cP_{\lambda}$ is given and that $u\in \cU(K)$ is such that $u\lambda u^{-1}$ still defines a morphism over $R$. Recall that $\cU_K$ has a canonical filtration $\cU_{K,\geq r} \subset \dots \subset \cU_{K,\geq 1} = \cU_K$ such that $\cU_{K,i}/\cU_{K,{i+}} \cong \prod_{\alpha | \alpha\circ \lambda = i} \cU_{\alpha}$. Write $\cU_i :=  \prod_{\alpha | \alpha\circ \lambda = i} \cU_{\alpha}$ and decompose $u= u_1 \cdot \dots \cdot u_r$.

    We know that $u\lambda(a) u^{-1} \in P(R)$ for all $a\in R^*$, i.e. $\cU(R) \ni u\lambda(a) u^{-1} \lambda^{-1}(a)$. The image of this element in $\cU_{\geq 1}/\cU_{\geq 2}$ is $u_1 a u_1^{-1} a^{-1}$ which can only be in $\cU_1(R)$ for all $a$ if $u_1\in \cU(R)$, but then we can replace $u$ by $u_2 \dots u_r$  and conclude by induction.

    Finally we need to show that the group scheme $\cL_\lambda$ in (1) is a Bruhat--Tits group scheme.
    By construction it suffices to show this after an \'etale base change $\Spec R^\prime \to \Spec R$, so we may assume that $\cG_R$ is quasi-split, i.e. that $\cG_K$ contains a maximal torus $\cT_K$ and that $\cG$ contains the connected N\'eron model $\cT$ of $\cT_K$.

    As conjugation by elements of $\cG(R)$ produces isomorphic group schemes we may assume as above that $\lambda\colon \bG_{m,R} \to \cT \subset \cG$. The scheme $\cG_R$ is given by a valued root system and the restriction of this to the roots of $\cL_{\lambda,K}$ defines a Bruhat--Tits group scheme with generic fiber $\cL_{\lambda,K}$, contained in $\cL_\lambda$. Finally, by definition  the special fiber of $\cL_{\lambda}$ is the centralizer of a torus, so it is connected. Thus the smooth scheme $\cL_\lambda$ has to be equal to this Bruhat--Tits scheme.
\hfill $\Box$
\end{proof}

Let us translate this back to our global situation: As before let
$\cG/C$ be a Bruhat--Tits group scheme over our curve $C$ and denote
by $\eta\in C$ the generic point of $C$.

\begin{lemma}\label{ReesBT}
        Let  $\lambda_{\eta} \colon \bG_{m,\eta} \to \cG_\eta$ be a cocharacter and $\cP_{\eta,\lambda}$ the corresponding parabolic, $\cU_{\eta,\lambda}$ its unipotent radical and
$\cU_\lambda,\cP_\lambda$ the closures of
$\cU_{\eta,\lambda},\cP_{\eta,\lambda}$ in $\cG$. Then:
        \begin{enumerate}
            \item[\rm (1)] $\cP_\lambda,\cU_\lambda$ are smooth group schemes over $C$. The quotient $\cP_\lambda/\cU_\lambda=:\cL_\lambda$ is a Bruhat--Tits group scheme.
            \item[\rm (2)] The morphism $\lambda_\eta$ extends to $\overline{\lambda}\colon \bG_m \times C \to Z(\cL_\lambda)\subset \cL_\lambda$.
            %\item $U_\lambda$ has a natural filtration $U_r \subset U_{r-1} \subset \dots \subset U_1 =\cU$
            %   such that $U_{i}/U_{i+1}$ is a vector bundle over $C$ of weight $i$ with respect to $\lambda$.
            \item[\rm (3)] There exist a canonical $ \cU_\lambda$-torsor $\cU^u$ together with an isomorphism $\cU^u|_{k(C)} \cong \cU_{k(C)}$ such that $\lambda$ extends to $\lambda \colon \bG_m \to \cU^u \times^{\cU,\conj} \cG$.
        \end{enumerate}
\end{lemma}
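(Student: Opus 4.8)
The plan is to deduce the global statement from its local counterpart, Lemma \ref{parabolicBT}, applied at the generic point $\eta$ and at each closed point $x\in C$, where $\cO_{C,x}$ is a discrete valuation ring and $\cG|_{\Spec\cO_{C,x}}$ is parahoric. Two facts make this reduction work: all the assertions involved (smoothness over $C$, being a Bruhat--Tits group scheme, a morphism being defined) are local on the base $C$; and scheme-theoretic closure commutes with the flat localizations $\Spec\cO_{C,x}\to C$ and $\Spec k(C)\to C$. Thus $\cP_\lambda|_{\Spec\cO_{C,x}}$ is the closure of $\cP_{\eta,\lambda}$ inside $\cG|_{\Spec\cO_{C,x}}$, and likewise for $\cU_\lambda$, so everything may be checked point by point.

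For (1), over $\eta$ the groups $\cP_{\eta,\lambda},\cU_{\eta,\lambda}$ are a parabolic of the reductive group $\cG_\eta$ and its unipotent radical, hence smooth. Over each $\Spec\cO_{C,x}$, Lemma \ref{parabolicBT}(2) identifies the closure of $\cP_{\eta,\lambda}$ with $\cP_{\lambda_x}$ for some local cocharacter $\lambda_x\colon\bG_{m,\cO_{C,x}}\to\cG$, and Lemma \ref{parabolicBT}(1) then gives that $\cP_{\lambda_x}$ and $\cU_{\lambda_x}$ are smooth and that $\cP_{\lambda_x}/\cU_{\lambda_x}$ is Bruhat--Tits. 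Since smoothness is local, $\cP_\lambda$ and $\cU_\lambda$ are smooth over $C$; as $\cU_\lambda$ is a closed normal subgroup scheme, formation of the fppf quotient commutes with the flat localizations, so $\cL_\lambda:=\cP_\lambda/\cU_\lambda$ is representable and, by the local description, is a smooth affine group scheme that is parahoric at every closed point and reductive over $\eta$, i.e. a Bruhat--Tits group scheme.

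For (2), I would first produce an extension into $\cL_\lambda$ and then check centrality. Generically $\lambda_\eta$ factors through $Z(\cL_\lambda)|_\eta$. Composing the local extensions of $\lambda$ provided by part (3) (the element $u$ there lies in $\cU$, so it dies under $\cP_\lambda\to\cL_\lambda$) with the projection $\cP_\lambda\to\cL_\lambda$ yields a cocharacter $\overline\lambda\colon\bG_{m,C}\to\cL_\lambda$ restricting to $\lambda_\eta$. That $\overline\lambda$ takes values in the centre follows from a density argument: the morphism $\cL_\lambda\times_C\bG_{m,C}\to\cL_\lambda$ sending $(g,t)$ to $\overline\lambda(t)\,g\,\overline\lambda(t)^{-1}g^{-1}$ agrees with the constant morphism to the identity on the generic fibre, and since $\cL_\lambda\times_C\bG_{m,C}$ is smooth, hence flat, over the integral base $C$, its generic fibre is schematically dense; as $\cL_\lambda$ is separated the two morphisms coincide, so $\overline\lambda$ lands in $Z(\cL_\lambda)$.

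The heart of the matter is (3). At each closed point $x$, Lemma \ref{parabolicBT}(3) furnishes a \emph{canonical} class $u_x\in\cU(K_x)/\cU(\cO_{C,x})$ such that $u_x\lambda_\eta u_x^{-1}$ extends to $\bG_{m,\cO_{C,x}}\to\cP_\lambda$. Spreading $\lambda_\eta$ out to a cocharacter over a dense open subset of $C$ shows that $u_x$ is trivial for all but finitely many $x$, so the collection $\{u_x\}_x$ is a finite family of modifications of the trivial $\cU_\lambda$-torsor. By a Beauville--Laszlo type gluing of torsors from local modifications these define a $\cU_\lambda$-torsor $\cU^u$ on $C$ together with a canonical trivialization $\cU^u|_{k(C)}\cong\cU_{k(C)}$, canonical because each $u_x$ is. Twisting $\cG$ by $\cU^u$ through the conjugation action of $\cU_\lambda\subset\cG$ produces the inner form $\cU^u\times^{\cU,\conj}\cG$, which is canonically $\cG_\eta$ over $\eta$, and the remaining point is that $\lambda_\eta$ extends to a cocharacter of this inner form. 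This is again checked locally: over $\Spec\cO_{C,x}$ the twist replaces $\lambda_\eta$ by its conjugate $u_x\lambda_\eta u_x^{-1}$, which extends by Lemma \ref{parabolicBT}(3); since the twisted group is separated over $C$ and all local extensions restrict to $\lambda_\eta$ over $\eta$, they are unique and glue to the desired $\lambda\colon\bG_m\to\cU^u\times^{\cU,\conj}\cG$. I expect the main obstacle to lie precisely here, in assembling the canonical relative positions $u_x$ into an honest $\cU_\lambda$-torsor and in identifying the twist with pointwise conjugation by $u_x$ over each local ring; by contrast (1) and (2) are essentially formal consequences of Lemma \ref{parabolicBT} together with locality and density.
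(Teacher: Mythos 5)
Your proposal is correct and follows essentially the same route as the paper: spread $\lambda_\eta$ out to a dense open $U\subset C$, check (1) and (2) locally at the finitely many points of $C-U$ via Lemma \ref{parabolicBT}, and obtain (3) from the observation that a $\cU_\lambda$-torsor trivialized over $k(C)$ is exactly a finite collection of classes $u_x\in\cU(k(C))/\cU(\cO_{C,x})$, which Lemma \ref{parabolicBT}(3) supplies canonically. Your added details (flat localization compatibility of scheme-theoretic closure, and the density argument for centrality of $\overline\lambda$) merely make explicit steps the paper leaves implicit.
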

\begin{proof}
        First note that $\lambda_{\eta}$ extends canonically to a Zariski open subset $U\subset C$ (e.g. \cite[Expos\'e XI, Proposition 3.12 (2)]{SGA3}). Let us denote this morphism $\lambda_U \colon \bG_{m,U} \to \cG_U$. Over $U$ the subgroup $\cP_{\lambda,U} \subset \cG_U$ is a parabolic subgroup of the reductive group scheme $\cG_U$, the groups $\cU_{\lambda,U},\cL_{\lambda,U}$ are the corresponding unipotent radical and Levi subgroup.
        Part (1) and (2) can thus be checked locally around all points $x\in C-U$ and there Lemma \ref{parabolicBT} gives the result. Since a $\cU$-torsor together with an isomorphism $\cU^u|_{k(C)} \cong \cU_{k(C)}$ is given by a finite collection of elements $u_x \in U(k(C))/U(\cO_{C,x})$ for some $x\in C$ the last part also follows from Lemma \ref{parabolicBT} (3).
        \hfill $\Box$
\end{proof}

This lemma allows us to generalize the Rees construction: Given
$\lambda_{\eta}\colon \bG_{m,\eta} \to \cG_{\eta}$ the above lemma
constructs an inner form $\cP^u \subset \cG^u$ such that
$\lambda\colon \bG_{m,C} \to \cP^u$ extends. As we proved that
fiberwise conjugation by $\lambda$ contracts $\cP^u$ to
$\cL_\lambda$ we again obtain the morphism of group schemes over
$C\times \bG_m$
\begin{align*}
\conj_\lambda\colon \cP^u \times \bG_m &\to P^u \times \bG_m\\
(p,t) &\mapsto (\lambda(t)p\lambda(t)^{-1},t).
\end{align*}
By \cite[Proposition 4.2]{Hesselink} this homomorphism extends to
\begin{align*}
\gr_\lambda\colon \cP^u  \times \bA^1 &\to \cP^u  \times \bA^1\\
\end{align*}
in such a way that $\gr(p,0)= \lim_{t\to 0}
\lambda(t)p\lambda(t)^{-1} \in \cL_\lambda \times 0$.

Thus given a $\cP^u$ torsor $\cE^u$ we can define the Rees
construction:

$$\Rees(\cE^u,\lambda):=  (\cE^u \times \bA^1) \times^{\gr_{\lambda}} (\cP^u  \times \bA^1).$$

Given a $\cG$ torsor $\cE$ together with a reduction $\cE_{\cP}$ to
a parabolic subgroup $\cP$ and a cocharacter $\lambda_\eta \colon
\bG_m \to \cP_{\eta}$ defining $\cP$ we take the associated
$\cP^u$-torsor $\cE^u:=\Isom_{\cP/C}(\cE_{\cP},\cU^u \times^\cU
\cP)$ and define
$$\Rees(\cE_{\cP},\lambda_\eta):= \Isom_{\cP^u/C}(\Rees(\cE^u,\lambda),\cU^u \times^\cU \cP).$$

As before, this construction only depends on $\cP$ and the
composition $\overline{\lambda}\colon \bG_{m,\eta} \to \cL_{\eta}$.

\begin{remark}\label{Rem:ComputingWtcG}{\rm
Note that the adjoint bundle of $\Rees(\cE_{\cP},\lambda_{\eta})$ is
the vector bundle on $C \times [\bA^1/\bG_m]$ that on the generic
fiber corresponds to the very close degeneration given by
$\Ad(\cE_{\cP})_{\eta}$ and the cocharacter given by $\Ad(\lambda)$.
As we know that this already defines the very close degeneration,
the passage to $\cP^u$-torsors was only needed in order to give a
formula for the very close degeneration in terms of $\cG$ torsors.}
\end{remark}

%%%%
\subsection{Very close degenerations of $\cG$-bundles}
%%%%
We can now classify very close degenerations $f\colon [\bA^1/\bG_m]
\to \Bun_\cG$. Such a morphism defines a $\cG$-torsor $\cE$ over
$C\times [\bA^1/\bG_m]$ and $f(0)$ defines a non-trivial cocharacter
$\lambda\colon \bG_m \times C \to \Aut_{\cG/C}(\cE_0) =:
\cG^{\cE_0}$. This defines $\cL_\lambda,\cP_\lambda \subset
\cG^{\cE_0}$. As $\cL_\lambda$ is the concentrator scheme of the
$\bG_m$ action we again get that the morphism:
 \begin{align*}
 \conj_\lambda\colon \cP_\lambda \times \bG_m &\to \cP_\lambda \times \bG_m\\
 (p,t) &\mapsto (\lambda(t)p\lambda(t)^{-1},t)
 \end{align*}
 extends to
 \begin{align*}
 \gr_\lambda\colon \cP_\lambda \times \bA^1 &\to \cP_\lambda \times \bA^1
 \end{align*}
 in such a way that $\gr(p,0)= \lim_{t\to 0} \lambda(t)p\lambda(t)^{-1} \in \cL_\lambda \times 0$.
And so we can define the Rees construction for $\cP_\lambda$-bundles
$\cF_\lambda$:
$$\Rees(\cF_{\lambda},\lambda) := [\big((\cF_{\lambda} \times \bA^1)\times^{\gr_\lambda} (\cP_\lambda \times \bA^1)\big)/\bG_m].$$

%Again we abbreviate this group scheme over $C$ as $\cG^{\cE_0}$.
The $\cG$ torsor $\cE$ is determined by the $\cG^{\cE_0}$-torsor
$\cE^\prime := \Isom(\cE,\cE_0)$.  As before $\Isom(\cE_0,\cE_0)$
being the trivial $\cG^{\cE_0}$ torsor $\cE^\prime_0$ comes equipped
with a canonical reduction to $\cL_\lambda$ and as in Lemma
\ref{equivG} the corresponding reduction to $\cP_\lambda$ lifts
canonically to $C\times [\bA^1/\bG_m]$. We denote this reduction by
$\cE^\prime_{\lambda}$. Now we can argue as in Lemma \ref{Lem:ReesG}
to identify
$$\cE^\prime_{\lambda} \cong \Rees(\cE_{\lambda,1}^\prime,\lambda)$$
and thus
$$\cE \cong \Isom_{\cG^{\cE_0}}(\cE^\prime,\cE_0) \cong \Isom_{\cG^{\cE_0}}(\Rees(\cE^\prime_{\lambda,1},\lambda)\times^{\cP_\lambda}\cG^{\cE_0},\cE_0).$$

Formulating stability in these terms would have the annoying aspect
that all possible group schemes $\cG^{\cE_0}$ would appear in the
formulation. This can be avoided this by restricting to the generic
fiber as follows:

By the description of $G$-bundles over $[\bA^1/\bG_m]$ (Lemma
\ref{equivG}) we have
$$\Isom(\cE,\cE_0)_{\eta\times [\bA^1/\bG_m]}=\cE^\prime|_{\eta \times [\bA^1/\bG_m]} \cong [\bA^1 \times \cG^{\cE_0}_{\eta}/\bG_m].$$
In particular $\cE_{1,\eta} \cong \cE_{0,\eta}$. Therefore the
canonical reduction of $\cE_1^\prime$ to $\cP_{\lambda}$ corresponds
to a reduction of $\cE_{1,k(C)}$ to a parabolic $P_{\lambda_K}
\subset \cG^{\cE_1}$.

Thus given the cocharacter $\lambda \colon \bG_m \to \cG^{\cE_0}$ we
can find a cocharacter $\lambda_K \colon \bG_m \to \cG^{\cE_1}$ such
that the canonical reduction of $\cE_1^\prime$ to
$\cP_{\lambda}\subset \cG^{\cE_0}$ defines a reduction
$\cE_{\cP^\prime_\lambda}$ of $\cE$ to $\cP_{\lambda}^\prime:=
\overline{\cP_{\lambda_K}} \subset \cG^{\cE_1}$.

Given the reduction of  $\cE$ to $\cP^\prime_\lambda$ we already
defined the corresponding Rees construction. Thus we find:
\begin{lemma}\label{Lem:vcdGbundlesintrinsic}
    Let $\cG \to C$ be a Bruhat--Tits group scheme, $\cE\in \Bun_{\cG}$ and $\cG^{\cE} := \Aut_{\cG/C}(\cE)$ the corresponding inner form of $\cG$.

    Then any morphism $f\colon [\bA/\bG_m] \to \Bun_{\cG}$ with $f(1) = \cE$ can be obtained from the Rees construction applied to a generic cocharacter $\lambda_{\eta} \colon \bG_m \to \cG^{\cE}_{k(C)}$.
\end{lemma}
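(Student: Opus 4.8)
The plan is to run the argument that produced Lemma~\ref{Lem:ReesG} for split reductive groups, but now with the inner forms of $\cG$ playing the role of $G$ and with Lemma~\ref{ReesBT} supplying the parabolic and Levi group schemes. A morphism $f\colon [\bA^1/\bG_m]\to \Bun_\cG$ with $f(1)=\cE$ is the same datum as a $\cG$-torsor $\tilde\cE$ on $C\times[\bA^1/\bG_m]$ restricting to $\cE$ over $C\times 1$; write $\cE_0:=\tilde\cE|_{C\times 0}$. Restricting $f$ to $[0/\bG_m]$ produces a cocharacter $\lambda\colon \bG_{m,C}\to \cG^{\cE_0}:=\Aut_{\cG/C}(\cE_0)$, and since $\cG^{\cE_0}$ is an inner form of $\cG$, hence again a Bruhat--Tits group scheme, Lemma~\ref{parabolicBT} yields the concentrator scheme $\cP_\lambda$, its Levi $\cL_\lambda$ and unipotent radical $\cU_\lambda$ as smooth subgroup schemes, and gives the contraction morphism $\gr_\lambda$ defining the Rees construction for $\cP_\lambda$-torsors.

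First I would pass from $\tilde\cE$ to the $\cG^{\cE_0}$-torsor $\cE^\prime:=\Isom_\cG(\tilde\cE,\cE_0)$, as in Lemma~\ref{equivG}(2); this loses nothing, since $\cE^\prime$ recovers $\tilde\cE$ via $\tilde\cE\cong\Isom_{\cG^{\cE_0}}(\cE^\prime,\cE_0)$. The restriction $\cE^\prime_0=\Isom(\cE_0,\cE_0)$ is the trivial torsor, so it carries the canonical reduction to $\cL_\lambda$ attached to $\lambda$. The key geometric step, copied from the proof of Lemma~\ref{equivG}, is that this $\cL_\lambda$-reduction over $C\times[0/\bG_m]$ extends to a $\bG_m$-equivariant reduction $\cE^\prime_\lambda$ of $\cE^\prime$ to $\cP_\lambda$ over all of $C\times[\bA^1/\bG_m]$: the obstruction to lifting order by order lies in $H^1(B\bG_m,-)$ of a sheaf on which $\bG_m$ acts with negative weights and hence vanishes, while the formal lifts algebraize by Lemma~\ref{Lem:vcdeformation}(2). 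Running the Bia\l ynicki--Birula/Hesselink contraction of Lemma~\ref{Lem:ReesG} on the twisted action $t\star p:=(t.p)\cdot\lambda(t^{-1})$ then identifies $\cE^\prime_\lambda\cong \Rees(\cE^\prime_{\lambda,1},\lambda)$, exhibiting $\tilde\cE$ itself as built from the Rees construction applied to $\lambda$ and this $\cP_\lambda$-reduction.

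Finally I would make the output intrinsic to $\cG^{\cE}=\cG^{\cE_1}$ rather than to the auxiliary form $\cG^{\cE_0}$. Over the generic point the description of bundles on $[\bA^1/\bG_m]$ (Lemma~\ref{equivG}) gives $\cE^\prime|_{\eta\times[\bA^1/\bG_m]}\cong[\bA^1\times\cG^{\cE_0}_\eta/\bG_m]$, so in particular $\cE_{1,\eta}\cong\cE_{0,\eta}$; thus the canonical $\cP_\lambda$-reduction of $\cE^\prime_1$ is the same datum as a reduction of $\cE_{1,k(C)}$ to a parabolic $P_{\lambda_K}\subset\cG^{\cE}_{k(C)}$, determining a generic cocharacter $\lambda_\eta\colon\bG_m\to\cG^{\cE}_{k(C)}$. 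Taking the closure $\cP^\prime_\lambda:=\overline{\cP_{\lambda_K}}\subset\cG^{\cE}$ by Lemma~\ref{parabolicBT}(2) converts the abstract $\cP_\lambda$-reduction into a genuine reduction of $\cE$ over $C$, and the Rees construction of Lemma~\ref{ReesBT} applied to $\lambda_\eta$ reproduces $f$. The step I expect to require the most care is this reconciliation of the two inner forms: one must check that passing to $\cG^{\cE_0}$, forming $\cP_\lambda$, and transporting back to $\cG^{\cE}$ via the generic isomorphism $\cE_{0,\eta}\cong\cE_{1,\eta}$ is independent of choices (compatibly with Remark~\ref{Rem:FinitelyConjP} on conjugacy of closures), so that $\lambda_\eta$ is well defined up to generic conjugacy and the reduction extends over the ramification points as in Lemma~\ref{ReesBT}(3).
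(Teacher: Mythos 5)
Your proposal follows essentially the same route as the paper: the paper likewise passes to the $\cG^{\cE_0}$-torsor $\cE^\prime=\Isom_{\cG}(\cE,\cE_0)$, lifts the canonical $\cP_\lambda$-reduction equivariantly as in Lemma~\ref{equivG}, identifies $\cE^\prime_\lambda\cong\Rees(\cE^\prime_{\lambda,1},\lambda)$ via the Hesselink contraction argument of Lemma~\ref{Lem:ReesG}, and then transports the reduction to $\cG^{\cE_1}$ through the generic isomorphism $\cE_{1,\eta}\cong\cE_{0,\eta}$ and the closure $\cP^\prime_\lambda=\overline{\cP_{\lambda_K}}$, with the Rees construction for the generic cocharacter supplied by Lemma~\ref{ReesBT}. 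The only cosmetic slip is your citation of Lemma~\ref{parabolicBT}(2) for taking the closure globally over $C$, where the relevant global statement is Lemma~\ref{ReesBT}; the mathematical content is unaffected.
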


In the above we described reductions by cocharacters $\lambda \colon
\bG_m \to \cG^{\cE}_{k(C)}$ because this description works over any
field. As in \cite{Behrend} this is more suitable to study
rationality problems for canonical reductions of $\cG$-bundles. Over
algebraically closed fields we can also reformulate this in terms of
reductions to parabolic subgroups of $\cG_{k(C)}$ as follows:

\begin{lemma}\label{Lem:vcdGbundlesClassical}
    Let $k=\overline{k}$ be an algebraically closed field, $\cG \to C$ be a Bruhat--Tits group scheme, $\cE\in \Bun_{\cG}(k)$.
    Then any morphism $f\colon [\bA/\bG_m] \to \Bun_{\cG}$ with $f(1) = \cE$ can be obtained from the Rees construction applied to a reduction of $\cE$ to a subgroup $\cP_\lambda \subset \cG$ which is defined by a generic cocharacter $\lambda \colon \bG_m \to \cG_{k(C)}$.
\end{lemma}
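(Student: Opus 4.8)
The plan is to deduce the statement from its intrinsic counterpart, Lemma \ref{Lem:vcdGbundlesintrinsic}, by untwisting the inner form $\cG^{\cE} = \Aut_{\cG/C}(\cE)$ at the generic point $\eta = \Spec k(C)$. That lemma already exhibits $f$ as the Rees construction attached to a generic cocharacter $\lambda_\eta \colon \bG_m \to \cG^{\cE}_{k(C)}$ together with the corresponding reduction of $\cE$; the only thing left to do is to trade the inner form $\cG^{\cE}$ for $\cG$ itself, that is, to realise $\lambda_\eta$ as a cocharacter of $\cG_{k(C)}$ and the associated parabolic as a reduction of $\cE$ to a subgroup $\cP_\lambda \subset \cG$.

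The key input is that $k = \bar k$ forces $k(C)$ to be a $C_1$ field (Tsen), hence of cohomological dimension at most $1$. First I would use this to trivialise the generic torsor: by Steinberg's theorem (Serre's Conjecture I in the connected reductive case) one has $H^1(k(C), \cG_{k(C)}) = 0$, so $\cE_\eta := \cE|_{\Spec k(C)}$ is a trivial $\cG_{k(C)}$-torsor. A choice of trivialisation $\cE_\eta \cong \cG_{k(C)}$ identifies $\cG^{\cE}_{k(C)} = \Aut_{\cG_{k(C)}}(\cE_\eta)$ with $\cG_{k(C)}$ and carries $\lambda_\eta$ to a genuine cocharacter $\lambda \colon \bG_m \to \cG_{k(C)}$, and the parabolic $\cP_{\lambda_\eta}$ to $P_\lambda$.

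Next I would produce the global reduction. Applying Lemma \ref{lemma1.14} over $\eta$ (where $\cG_\eta$ is reductive), the parabolic $\cP_{\lambda_\eta} \subset \cG^{\cE}_\eta$ becomes a reduction of $\cE_\eta$ to $P_\lambda \subset \cG_\eta$. Taking closures as in Lemma \ref{ReesBT} turns $P_\lambda$ into the subgroup $\cP_\lambda \subset \cG$, and the generic reduction extends to a reduction of $\cE$ to $\cP_\lambda$ over all of $C$, since the remaining points are of codimension one and the relevant partial flag quotient is proper over the smooth curve $C$. Finally, because the Rees construction depends only on $\cP_\lambda$ and the induced central cocharacter $\overline{\lambda}\colon \bG_m \to Z(\cL_\lambda)$ --- and, by Remark \ref{Rem:ComputingWtcG}, the resulting very close degeneration is already determined by its restriction to $\eta$ --- the degeneration built from $(\cE_{\cP_\lambda}, \lambda)$ agrees generically with the one built from $\lambda_\eta$, hence equals $f$.

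I expect the main obstacle to be the triviality of $\cE_\eta$ in positive characteristic, where $k(C)$ is imperfect and Steinberg's original argument does not apply verbatim; the point that must be checked is that Serre's Conjecture I still holds for the connected reductive group $\cG_{k(C)}$ over the imperfect $C_1$ field $k(C)$. Should one wish to sidestep the full vanishing of $H^1$, an alternative is to observe only that $\cG_{k(C)}$ is quasi-split over the $C_1$ field $k(C)$ --- so that it carries a parabolic of every type defined over $k(C)$, each of the form $P_\lambda$ --- and to obtain the reduction of $\cE_\eta$ of the matching type directly from $\cP_{\lambda_\eta}$ through the twisting bijection of Lemma \ref{lemma1.14}, without trivialising the whole torsor.
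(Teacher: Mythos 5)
Your first two steps match the paper's opening move: the paper likewise starts from Lemma \ref{Lem:vcdGbundlesintrinsic}, and generic triviality of $\cE_\eta$ over $\overline{k}(C)$ (Borel--Springer/Steinberg, valid here since the paper itself invokes it) is not where the difficulty lies. The genuine gap is your extension step: the claim that ``the relevant partial flag quotient is proper over the smooth curve $C$'' fails at the points of $\Ram(\cG)$. There $\cP_\lambda \subset \cG$ is the flat closure of the generic parabolic, and $\cG/\cP_\lambda$ is \emph{not} proper over $\cO_{C,x}$. Concretely, let $\cG$ be the Iwahori group scheme of $\SL_2$ over $R=k\pbl t \pbr$ and $\cP_\lambda$ the closure of the upper-triangular Borel $B_K$: the generic fibre of $\cG/\cP_\lambda$ is $\bP^1_K$, but by the Iwasawa decomposition the double cosets $\cG(R)\backslash \SL_2(K)/B_K(K)$ are nontrivial, so the $K$-point $\dot{w}\,B_K(K)$ ($\dot w$ the nontrivial Weyl element) does not lie in $(\cG/\cP_\lambda)(R)=\cG(R)/\cP_\lambda(R)$; a generic reduction need not extend across a ramification point. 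The structural reason is Remark \ref{Rem:FinitelyConjP}: parabolics that are conjugate at the generic point can have non-conjugate closures in $\cG$, because the closure records the relative position with respect to the parahoric structure at the bad points. A trivialization chosen only at $\eta$ therefore does not control which closure $\cP_\lambda$ you land on at $\Ram(\cG)$, and for a bad choice $\cE$ admits no reduction to $\cP_\lambda$ extending the generic one. Your proposed ``sidestep'' via quasi-splitness has the same defect: matching the generic type of the parabolic does not pin down its closure at the ramification points, which is what the statement's subgroup $\cP_\lambda\subset\cG$ actually encodes.

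The paper closes exactly this gap with Proposition \ref{Prop:WeakApproximiation} (geometric weak approximation), which is why that appendix result exists: one trivializes $\cE$ not just at $\eta$ but over an open subset $U\subset C$ \emph{containing} $\Ram(\cG)$. Such a trivialization $\psi$ identifies $\cG^{\cE}|_U\cong\cG|_U$ and carries $\cP^{\cE}_\lambda|_U$ isomorphically onto $\cP_\lambda|_U$, so the closures agree at every ramification point by fiat; over $C-U$ the group scheme $\cG$ is reductive, where parabolics of the same type are locally conjugate, so $\cP_\lambda$ and $\cP^{\cE}_\lambda$ are also identified near the remaining points. The global reduction of $\cE$ to $\cP^{\cE}_\lambda$ already supplied by Lemma \ref{Lem:vcdGbundlesintrinsic} then transports directly into a reduction to $\cP_\lambda$, with no extension-over-bad-points argument needed. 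Your outline becomes correct once you strengthen ``trivialize at the generic point'' to ``trivialize on an open neighbourhood of all of $\Ram(\cG)$''; the rest of your argument (identification of $\lambda$, dependence of the Rees construction only on $\cP_\lambda$ and $\overline{\lambda}$, and agreement of the degenerations) then goes through as you wrote it.
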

\begin{proof}
By Lemma \ref{Lem:vcdGbundlesintrinsic} we know that $f$ can be
defined by applying the Rees construction to a cocharacter
$\lambda_\eta\colon \bG_m \to \cG_{k(C)}^{\cE}$. Let
$\cP_\lambda^{\cE} \subset \cG^{\cE}$ be the closure of the
parabolic subgroup defined by $\lambda_\eta$. As explained in the
appendix (Proposition \ref{Prop:WeakApproximiation}) any $\cG$
bundle on $C_{\overline{k}}$ can be trivialized over an open subset
$U \subset C_{\kbar}$ which contains $\Ram(\cG) \subset U$. Choose
such a trivialization $\psi\colon \cE|_{U} \cong \cG|_{U}$. This
induces an isomorphism $\cG_{k(C)}^{\cE} \cong \cG_{k(C)}$ and this
defines $\lambda \colon \bG_m \to \cG_{k(C)}$. Denote by
$\cP_\lambda\subset \cG$ the closure of the parabolic subgroup
defined by $\lambda$. By construction we know that
$\cP_\lambda^\cE|_U \cong \cP_\lambda|_U$ and since $\cG$ is a
reductive group scheme over $C-U$ the groups $\cP_\lambda$ and
$\cP_\lambda^{\cE}$ are also isomorphic in a neighborhood of the
points in $C-U$. Thus the reduction of $\cE$ to $\cP_\lambda^{\cE}$
defines a reduction of $\cE$ to $\cP_\lambda$. This proves our
claim.
\hfill $\Box$
\end{proof}

\begin{remark}\label{Rem:GloballyFinitelyManyConjClassesOfP}{\rm
    In the case of $G$-bundles there are only finitely many conjugacy classes of parabolic subgroups $P\subset G$ and therefore any very close degeneration of a bundles $\cE$ is induced from viewing $\cE$ as lying in the image of a morphism $\Bun_P \to \Bun_G$. To show that the semistable points of $\Bun_G$ form an open substack the fact that one needs only to consider finitely many such $P$ is helpful.

    From Lemma \ref{Lem:vcdGbundlesClassical} we can now conclude that the analogous result also holds for $\cG$-bundles: Suppose $\cP,\cP^\prime \subset \cG$ are closures of parabolic subgroups in the generic fiber. If $\cP$ and $\cP^\prime$ happen to be conjugate in $\cG$ at the generic point of $C$ they are also conjugate locally on $C-\Ram(\cG)$ as parabolic subgroups of the same type are conjugate in reductive groups. Also locally around any point $x\in \Ram(\cG)$ we saw in Remark \ref{Rem:FinitelyConjP} that there are only finitely many conjugacy classes of closures of parabolic subgroups $\cP\subset \cG$. Thus up to local conjugation in $\cG$ there are only finitely many closures of parabolic subgroups $\cP\subset \cG$.

    As in the case of $G$-bundles, if  $\cP,\cP^\prime \subset \cG$ are locally conjugate over $C$, then the transporter $\Transp_{\cG}(\cP,\cP^\prime)$ of elements of $\cG$ that conjugate $\cP$ into $\cP^\prime$ is a $\cP-\cP^\prime$ bi-torsor (because parabolic subgroups are equal to their normalizer over $C-\Ram(\cG)$ and a local section of $\cG$ normalizes $\cP$ if and only it normalizes the generic fiber). This defines a commutative diagram
    $$ \xymatrix{  \Bun_{\cP} \ar[rr]^{\cong}\ar[dr] && \Bun_{\cP^\prime} \ar[dl] \\ & \Bun_\cG }$$
    identifying reductions to the structure groups $\cP$ and $\cP^\prime$.}
\end{remark}

%%%%
\subsection{The stability condition}\label{sec:stabcond}
%%%%

 Let us fix our group scheme $\cG$ and a line bundle $\cL := \cL_{\det,\un{\chi}}$ as in Section \ref{PicG}.
 In this section we want to describe $\cL$-stability condition in terms of degrees of bundles.
 Lemma \ref{Lem:vcdGbundlesClassical} and the definition of $\cL$-stability (\ref{Def:Lstab}) imply:
\begin{remark}{\rm
 A $\cG$-bundle $\cE$ on $C_{\overline{k}}$ is $\cL$-stable if and only if for all parabolic subgroups $P\subset \cG_{\overline{k}(C)}$ with closure $\cP\subset \cG$, all dominant cocharacters $\lambda\colon \bG_m \to P$ and all reductions of $\cE$ to $\cP$ we have $$\wt_{\cL}(\Rees(\cE_\cP,\lambda)) > 0.$$}
\end{remark}

As in the case of $G$-bundles we want to express the above weight in
terms degrees of line bundles attached to reductions of $\cE$. We
start out with the intrinsic formulation of reductions as in Lemma
\ref{Lem:vcdGbundlesintrinsic}, but in the end this reduces to a
computation on the adjoint bundle $\Ad(\cE)$.

 Fix $\cE$ a $\cG$-bundle, $\cE_{\cB}$ a reduction to a Borel subgroup $\cB \subset \Aut_{\cG}(\cE)$. As before let us denote by $\cU\subset \cB$ the closure of the  unipotent radical over the generic point $\eta\in C$ and $\cT=\cB/\cU$ the maximal torus quotient. Fix $S\subset \cT_{\eta}\subset \cB_\eta$ a maximal split torus in a lifting of the maximal torus at $\eta$. We will denote by $\Phi = \Phi(\cG_{\eta},S)$ the roots of $\cG_\eta$ and $\Phi_{\cB}^+$ will be the roots that are positive with respect to $\cB$.

For any point $x\in \Ram(\cG)$ we obtain a character
 $\chi_x^{\cB} \colon \cB_x \to \bG_m$ as composition $$\chi_x^{\cB}\colon \cB_x^\cE \to \cG_x^\cE \map{\chi} \bG_m.$$
 This morphism factors through $\cT_x^{\cE}$. For any $\lambda\colon \bG_m \to \cT$ we will write
 $\langle \chi_x^{\cB}, \lambda \rangle := \langle \chi_x^{\cB}, \lambda|_{x} \rangle$.

 The Rees construction applied to a generic dominant 1-parameter subgroup gives us a bundle $\cE_0$, that is induced from the $\cT$ bundle $\cE_{\cB}/\cU =: \cE_{\cT}$.

 To compute the weights,  we can decompose the adjoint bundle of $\cE_0$ into weight spaces:
 $$\ad(\cE_0)=\ad(\cE_{\cT}) \oplus_{a\in \Phi(\cG_{\eta})} \cu_a^{\cE_0}$$ and the Rees construction induces a filtration of $\ad(\cE)$ such that the associated graded pieces are $\cu_a^{\cE}\cong \cu_a^{\cE_0}$.

 For any 1-parameter subgroup $\lambda\colon \bG_m \to \cG_{\eta}^{\cE}$ which is dominant with respect to $\cB$ we then have:
 $$\wt_{\cE_{\cB}}(\lambda):= \wt_{\cE}(\lambda) = \sum_{a\in \Phi} \chi(\cu_a^{\cE}) \langle a, \lambda\rangle + \sum_{x\in \Ram(\cG)} \langle \chi_x^{\cB} ,\lambda \rangle.$$
%where we denoted by $\lambda_x$ the morphism obtained from restrictiong the canonical extension of $\lambda \colon \bG_{m,\eta} \to \cT^{\cE}_\eta$ to  $\bG_{m,C} \to \cT^{\cE}$ to $x$.

 As $\rk(\cu_a)=\rk(\cu_{-a})$ we can further compute:
 \begin{align*}
 \wt_{\cE_{\cB}}(\lambda) &= \sum_{a\in \Phi} \chi(\cu_a^{\cE}) \langle a,\lambda\rangle + \sum_{x\in \Ram(\cG)} \langle \chi_x^{\cB},\lambda_x \rangle \\
 &= \sum_{a\in \Phi} \deg(\cu_a^{\cE}) \langle a,\lambda\rangle + \sum_{x\in \Ram(\cG)} \langle \chi_x^{\cB},\lambda_x \rangle
 % \\
 %  &= -2 \sum_{a\in \Phi^+} \deg(U_a^{\cE}) \langle \lambda,a\rangle +  \sum_{a\in \Phi^+} (\deg(U_a^{\cE})+\deg(U_{-a}^{\cE})) \langle \lambda,a\rangle+  \sum_{i\in \Ram(\cG)} \langle \lambda,\chi_i \rangle\\
 %  &=: 2 (\chi-\un{\deg},\lambda).
 \end{align*}
 %Here we used that $\rank (U_a)=\rank(U_{-a})$ for all $a\in \Phi(\cG)$.

 In the case of unramified, constant group schemes, the degree $\deg(\cu_a)$ is a linear function in the root $a$. This does no longer hold for parahoric group schemes, but we still have relations:

 At the generic point $\eta$ of $C$ we get a canonical isomorphism $$k_a \colon \cu_{-a,\eta}^{\cE} \map{\cong} (\cu_{a,\eta}^{\cE})^\vee$$
 from the Killing form and this extends to an isomorphism at all points $c\in C$ where $\cG_c$ is reductive. Therefore, the determinant of $k_a$ defines a divisor $$D_a = \sum_{x \in \Ram(\cG)} f_{a,x}^{\cB} x.$$
With this notation we have
$$ \deg(\cu_{-a}^{\cE}) = - \deg(\cu_a^{\cE}) - \sum_{x\in \Ram(\cG)} f_{a,x}^{\cB}.$$
 Here we denote the coefficients by the letter $f$ because for a global torus $\cT \subset \cG$ with valuated root systems $f_{a,x}$ (see Section \ref{AppendixBT}) these numbers are $-(f_{a,x}+f_{-a,x})$. As any two tori are conjugate, we always find $$\left|\frac{f_{a,x}^{\cB}}{\rk(\cu_a)}\right|\leq 1.$$

 Thus we find
 \begin{align}
 \wt_{\cE_{\cB}}(\lambda) &= \sum_{a\in \Phi} \deg(\cu_a^{\cE}) \langle a,\lambda\rangle + \sum_{x\in \Ram(\cG)} \langle \chi_x^{\cB},\lambda \rangle \nonumber\\
 &= 2 \sum_{a\in \Phi^+} \deg(\cu_a^{\cE}) \langle a,\lambda\rangle +  \sum_{x\in \Ram(\cG)} \langle \chi_x^{\cB}+ \sum_{a\in \Phi^+_{\cB}} f_{a,x}^{\cB} a,\lambda \rangle \label{wtEquation}
% &=: -2 (\un{\deg}_{\cE_{\cT}},\lambda) +  \langle \sum_{a\in \Phi^+(\cT)} f_{a,x} a +\chi_x^{\cB}, \lambda_x \rangle.
 \end{align}

To compare this to the usual (parabolic)-degree let us fix a norm on
the set of all 1-parameter subgroups. A convenient choice for us
will be to fix for any maximal torus containing a maximal split
torus  $S_{\eta} \subset T_\eta \subset \cG_{\eta}$ the canonical
invariant bilinear form on $X_*(T_{\overline{\eta}})$:
  $$ (,):=\sum_{\alpha \in \Phi(\cG_{\overline{\eta}})} \langle \quad, \alpha \rangle \langle \quad , \alpha \rangle $$

We will denote the restriction of $(,)$ to $X_*(T)$ by the same
symbol and we will denote by $||\cdot||:=\sqrt{(\cdot,\cdot)}$ the
induced norm.

  \begin{remark}{\rm
    The bilinear form on $X_*(T_{\overline{\eta}})$ also induces a form on the cocharacter groups $X_*(\cT_{x})$ for all $x\in C$ closed.}
  \end{remark}

 \begin{remark}{\rm
    \begin{enumerate}
        \item[]
        \item[\rm (1)] If $\cG_0 = G\times X$  is a split group scheme we remarked above that $\deg(\cu_a^{\cE})$ is a linear function in $a$, denoted $\un{\deg}$ as this is the usual degree of the $\cT$-bundle $\cE_{\cT} = \cE_{\cB}/\cU$. Then the above formula reads
        $$  \wt_{\cE_{\cB}}(\lambda) =  (\un{\deg}, \lambda).$$
        This expresses the weight in terms of the degree that is classically used to define stability for $G$-torsors.
        \item[\rm (2)] If $\cG \to \cG_0 = G\times X$ is $\cG$ is obtained as the parahoric subgroup defined by the choice of parabolic subgroups $P_x \subset \cG_{0,x} =G$ the numbers $f_{a,x}$ are $0,1,-1$ depending on the relative position of $\cB_x$ and the image of $\cG^{\cE}_x \to (\cG_0^{\cE})_x$.  By the previous point this then again gives the same relation of the weight and the parabolic degree.
        \item[\rm (3)] As the difference $\deg(\cu_a^{\cE})-\deg(\cu_{-a}^{\cE})$ is always an integer, the formula also shows that as in the case of parabolic bundles the weight cannot be $0$ if for at least one $x\in \Ram(\cG)$ the group scheme $\cG|_{\cO_x}$ is an Iwahori group scheme and $\chi_x$ is chosen generically, e.g. such that  the numbers $\langle \chi_x, \check{\alpha}_i \rangle$ for some basis $\check{\alpha}_i$ of one parameter subgroups of $\cT$ are rational numbers with sufficiently large denominators.
        \item[\rm (4)] For generically split groups $\cG$ in \cite[Definition 6.3.4]{BS} Balaji and Seshadri describe stability in terms of $(\Gamma,G)$ bundles. In their setup the reductions $\cE_\cP$ are computed from taking invariants of an invariant parabolic reduction of a $G$-bundle on a covering $\tilde{C}\to C$ (\cite[Proof of Proposition 6.3.1]{BS}). The numerical invariants defining stability are then expressed as the parabolic degrees defined by characters of $P$ attached to a reduction of $\cE_\cP$. To compare this with our condition one can proceed as in (1) by using (\ref{wtEquation}). In this case, as $\cG$ is generically split, all $\cu_a^{\cE}$ are line bundles, which are the invariant direct images of the corresponding bundles of the reduction to $P$ on $\tilde{C}$. The parabolic weight then turns out to be related to the contribution form the ramification points which depends only on the relative position of the generic parabolic and the valuation of the root system defining $\cG$. As the precise relation requires a careful recollection on the relation of $(\Gamma,G)$-bundles and valued root systems we leave this to a later time.
    \end{enumerate}}
 \end{remark}

%%%%
\subsection{Canonical reduction for $\cG$-torsors}
%%%%
In this section we want to check that the canonical reduction of
$G$-bundles introduced by Behrend also exists for $\cG$-bundles. We
will then use this to deduce that the stack of stable and semistable
$\cG$-bundles are open substacks of finite type of $\Bun_{\cG}$.

In \cite{DHL} Halpern-Leistner gives general criteria for the
existence and uniqueness of canonical reductions for
$\theta$-reductive stacks. Unfortunately, $\Bun_{\cG}$ does not
satisfy this condition, so that we have to give a separate argument.
It will turn out that once we formulate the classical approach for
$G$-bundles (see \cite{Harder},\cite{Behrend}) in a suitable way,
most of the arguments generalize to this framework. This was also
explained by Gaitsgory and Lurie in \cite[Section
10]{GaitsgoryLurie} for a notion of stability induced from
$G$-bundles. Note that Harder and Stuhler also introduced the
concept of canonical reductions for Bruhat--Tits groups in the
adelic description of the points of the moduli stack
\cite{HarderStuhler}.

Let us fix our group scheme $\cG$ and a line bundle $\cL :=
\cL_{\det,\un{\chi}}$ as in Section \ref{PicG}. In general, to
define canonical destabilizing 1-parameter subgroups one needs to
fix a norm on the set of all such subgroups. We will simply use the
invariant form $(,)$ on $X_*(T_{\overline{\eta}})$ from the previous
section.

As for parabolic bundles, we will need the following assumption on
$\un{\chi}$. We will call $\cL$ {\em admissible} if $2 \langle
\chi_x, \check{a} \rangle \leq \rk{\cu}_a \langle a, \check{a}
\rangle $ for all roots $a$ of $\cG_x$.

As in \cite{DHL} a canonical reduction of a $\cG$-torsor should be a
1-parameter subgroup $\lambda \in X_*(\cG^{\cE}_\eta)$ such that
$\frac{\wt(\lambda)}{||\lambda||}$ is maximal. First of all this
number is bounded:

%We now follow the standard arguments to prove that the stack of semistable $\cG$-torsors is bounded and that there exist canonical destabilizing subgroups for unstable torsors. From this we can then deduce that the stack of (semi)-stable $\cG$-torsors are open substacks of finite type of $\Bun_{\cG}$.

\begin{lemma}
    For every $\cG$ torsor $\cE$ there exists $c_{\cE}>0$ such that $\frac{\wt(\lambda)}{||\lambda||}\leq c$ for all $\lambda\colon \bG_{m,\eta} \to \cG^{\cE}_{\eta}$.
\end{lemma}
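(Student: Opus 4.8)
The plan is to use the explicit formula for the weight derived in Section~\ref{sec:stabcond} together with the boundedness of slopes of subsheaves of a fixed bundle on a curve, and then to conclude by Cauchy--Schwarz. Given any nontrivial $\lambda\colon \bG_{m,\eta}\to\cG^{\cE}_\eta$, its image lies in a maximal torus, and I may choose a Borel subgroup $\cB\subset\cG^{\cE}_\eta$ for which $\lambda$ is dominant (take $\cB$ corresponding to the Weyl chamber whose closure contains $\lambda$). Writing $\cT=\cB/\cU$ and $\cE_\cT=\cE_\cB/\cU$ for the associated $\cT$-bundle, the computation preceding \eqref{wtEquation} gives
$$\wt(\lambda)=\sum_{a\in\Phi}\deg(\cu_a^{\cE})\langle a,\lambda\rangle+\sum_{x\in\Ram(\cG)}\langle\chi_x^{\cB},\lambda\rangle,$$
where $\cu_a^{\cE}=\cE_\cT\times^{\cT}\cu_a$. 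Since this is linear in $\lambda$, the estimate $\wt(\lambda)\le c_{\cE}\,||\lambda||$ will follow from Cauchy--Schwarz with respect to the form $(,)$ once the coefficients $\deg(\cu_a^{\cE})$ are bounded uniformly in $\lambda$, equivalently in the reduction $\cE_\cB$.

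The key step is the uniform bound $|\deg(\cu_a^{\cE})|\le M_{\cE}$, independent of the chosen torus. Here I would use that each $\cu_a^{\cE}$ occurs as a subquotient of the single, fixed bundle $\Ad(\cE)$: the Rees construction exhibits $\gr_i=\oplus_{\langle a,\lambda\rangle=i}\cu_a^{\cE}$ as a graded piece of the $\lambda$-weight filtration of $\Ad(\cE)$, and since $\cE_\cT$ is reduced to the torus, $\cu_a^{\cE}$ is a direct summand of $\gr_i$, hence a subquotient of $\Ad(\cE)$. For a fixed bundle $V$ on the smooth projective curve $C$ the Harder--Narasimhan filtration yields finite slopes $\mu_{\min}(V)\le\mu_{\max}(V)$, and any subquotient $W$ of $V$ satisfies $\mu_{\min}(V)\le\mu(W)\le\mu_{\max}(V)$ (by the standard Harder--Narasimhan inequalities, as $W$ is a quotient of a subbundle and a subbundle of a quotient of $V$). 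Applying this to $V=\Ad(\cE)$ and $W=\cu_a^{\cE}$ gives
$$|\deg(\cu_a^{\cE})|\le\rk(\cu_a)\cdot\max\big(|\mu_{\min}(\Ad(\cE))|,|\mu_{\max}(\Ad(\cE))|\big)=:M_{\cE},$$
a bound depending only on $\cE$ and not on the reduction.

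Finally I would assemble the estimate: there are finitely many roots $a\in\Phi$ of bounded length, finitely many $x\in\Ram(\cG)$, and the characters $\chi_x$ are fixed, so Cauchy--Schwarz $|\langle a,\lambda\rangle|\le||a||\cdot||\lambda||$ (using that $(,)$ induces compatible forms on the $X_*(\cT_x)$, cf.\ the preceding Remark) yields
$$|\wt(\lambda)|\le\Big(M_{\cE}\sum_{a\in\Phi}||a||+\sum_{x\in\Ram(\cG)}||\chi_x||\Big)||\lambda||.$$
Taking $c_{\cE}$ to be this constant (enlarging it to be positive if necessary) proves the lemma. The main obstacle is precisely the degree bound of the second paragraph: it is essential that $|\deg(\cu_a^{\cE})|$ be bounded uniformly over all maximal tori of $\cG^{\cE}_\eta$, and this is what is secured by viewing the $\cu_a^{\cE}$ as subquotients of the single bundle $\Ad(\cE)$ rather than of the varying associated graded bundles $\Ad(\cE_0)$, whose slopes would otherwise depend on $\lambda$.
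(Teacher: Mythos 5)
Your reduction of the lemma to a uniform two\hyphen sided bound $|\deg(\cu_a^{\cE})|\le M_{\cE}$ contains a genuine gap: the inequality $\mu_{\min}(V)\le\mu(W)\le\mu_{\max}(V)$ for an arbitrary subquotient $W$ of a fixed bundle $V$ is false. The Harder--Narasimhan inequalities bound subsheaves from \emph{above} and quotients from \emph{below}, and neither bound passes to subquotients in the other direction. Concretely, on $\bP^1$ the semistable bundle $V=\cO\oplus\cO$ has the locally free subquotient $V/\cO(-2)\cong\cO(2)$ of slope $2>\mu_{\max}(V)=0$. In the situation at hand the failure is not hypothetical: already for $\cG=\SL_2\times C$ and a fixed torsor $\cE$, there are Borel reductions $\cE_{\cB}$ with $\deg(\cu_a^{\cE})=-d$ for arbitrarily large $d$, and then $\cu_{-a}^{\cE}\cong\Ad(\cE)/\ad(\cE_{\cB})$ is a subquotient of the fixed bundle $\Ad(\cE)$ of degree $+d$. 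So the constant $M_{\cE}$ of your second paragraph does not exist --- the degrees of the individual root bundles $\cu_a^{\cE}$ are genuinely unbounded as the reduction varies --- and the Cauchy--Schwarz step, which needs absolute values of all coefficients, collapses.

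The argument can be repaired, and the repair is essentially the paper's proof. The point is an asymmetry you did not exploit: the lemma only claims an \emph{upper} bound on $\wt(\lambda)$, and for $\lambda$ dominant with respect to $\cB$ all pairings $\langle a,\lambda\rangle$ with $a\in\Phi^+_{\cB}$ are nonnegative, so one only needs upper bounds on degrees of suitably grouped sums of the $\cu_a^{\cE}$ --- not on each summand. Writing a dominant $\lambda$ as a nonnegative combination of the fundamental coweights $\check{\omega}_i$ (with coefficients bounded by a fixed multiple of $||\lambda||$), the weight $\wt(\check{\omega}_i)$ is, up to the bounded ramification terms, the degree of $\cE_{\cP_i}\times^{\cP_i}\cu_{\cP_i}$, the nilpotent radical bundle of the corresponding maximal parabolic reduction; the roots contributing to it form an upward\hyphen closed set, so this is an honest \emph{subbundle} of $\Ad(\cE)$, for which the upper bound does hold --- either via $\deg\le\rk\cdot\mu_{\max}(\Ad(\cE))$, or, as the paper argues, via $H^0(\ad(\cE_{\cP}))\subset H^0(\ad(\cE))$ combined with Riemann--Roch. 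Summing these upper bounds with the nonnegative coefficients, and absorbing the ramification terms exactly as in your third paragraph, yields $\wt(\lambda)\le c_{\cE}\,||\lambda||$.
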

\begin{proof}
  As in the classical case for every reduction $\cE_{\cP}$ of $\cE$ to a parabolic subgroup we have that $H^0(\ad(\cE_{\cP})) \subset H^0(\ad(\cE))$.
  By Riemann-Roch this implies that the degree of the unipotent radical $\deg( \cE_{\cP} \times^{\cP} \cu_{\cP})$ is uniformly bounded above for all reductions.
  In turn this gives for every reduction to a Borel subgroup $\cB \subset \cG^{\cE}$ an upper bound for the weight $\wt(\omega_i)$ for all dominant coweights $\omega_i$. As any dominant $\lambda$ is a positive linear combination of these, this gives the required bound.
  \hfill $\Box$
\end{proof}

\noindent {\bf Notation.}
    For any $\cG$-torsor $\cE$ over an algebraically closed field we define $$\mu_{\max}(\cE) := \sup_{\lambda} \frac{\wt_{\cE}(\lambda)}{||\lambda||}.$$

\subsubsection{Comparing weights of different reductions}
To characterize the canonical reduction we need to compare the
weights of different reductions.

Suppose $\cB,\cB^\prime \subset \cG^{\cE}$ are two Borel subgroups.
Any two such subgroups share a maximal split torus $S\subset
\cT_{\eta} \subset \cB_{\eta}\cap \cB^\prime_{\eta}$
(\cite[Proposition 4.4]{BorelTits}).

Let $\{a_1,\dots a_n\}$ be the positive simple roots of $\cB$.

We say that $\cB,\cB^\prime$ are neighboring reductions if there
exists $i_0$ such that $-a_{i_0}$ is a positive simple root of
$\cB^\prime$ and $a_i$ are positive roots of $\cB^\prime$ for all
$i\neq i_0$.

In this case $\cB,\cB^\prime$ generate a parabolic $\cP_{1}$ that is
minimal among the parabolic subgroups that are not Borel subgroups.
Denote by $\cL_1$ the corresponding Levi quotient.

\begin{lemma}
    Let $\cB,\cB^\prime$ be neighboring parabolic subgroups. Then we have:
    \begin{enumerate}
        \item[\rm (1)] $\wt_{\cB}(\lambda)=\wt_{\cB^\prime}(\lambda)$ for all $\lambda \in Z(\cL_I)$.
        \item[\rm (2)] $\wt_{\cB}(\check{\alpha}_i) + \wt_{\cB^\prime}(-\check{\alpha}_i)\leq 0$
    \end{enumerate}

\end{lemma}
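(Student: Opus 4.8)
The plan is to reduce both statements to a computation on the rank-one Levi quotient $\cL_1$ and then, for the inequality, to invoke a transversality (nonzero-morphism) argument of the kind used classically to compare neighbouring reductions.

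First I would record the structural fact underlying everything. Since $\cB$ and $\cB'$ are both contained in $\cP_1$ as subgroups of $\cG^{\cE}$, the dictionary between reductions and parabolic subgroups (Lemma \ref{lemma1.14}) shows that they induce one and the same reduction $\cE_{\cP_1}$ of $\cE$ to $\cP_1$; the reductions $\cE_{\cB},\cE_{\cB'}$ are exactly the two opposite Borel refinements of $\cE_{\cP_1}$ inside the rank-one group $\cL_1$ (a Bruhat--Tits group scheme by Lemma \ref{ReesBT}(1)). Consequently, in the weight formula \eqref{wtEquation} the root-space bundle $\cu_a^{\cE}$ attached to any root $a\neq\pm a_{i_0}$ is already the relevant graded piece of the $\cP_1$-filtration of $\Ad(\cE)$, hence determined by $\cE_{\cP_1}$ and independent of whether we use $\cB$ or $\cB'$; only $\cu_{\pm a_{i_0}}^{\cE}$ genuinely depend on the finer Borel. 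Likewise the ramification terms contribute $\langle\chi_x^{\cB},\lambda\rangle=\langle\chi_x,\lambda|_x\rangle=\langle\chi_x^{\cB'},\lambda\rangle$, since $\chi_x$ is a character of $\cG_x^{\cE}$ evaluated on $\lambda|_x$, independent of the chosen Borel.

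For part (1), take $\lambda\in X_*(Z(\cL_1))$, so that $\langle a_{i_0},\lambda\rangle=0$. Then the two roots $\pm a_{i_0}$ of $\cL_1$ contribute nothing to \eqref{wtEquation}; for every other root the bundle $\cu_a^{\cE}$ and the pairing $\langle a,\lambda\rangle$ agree for $\cB$ and $\cB'$ by the previous paragraph; and the character terms agree as just noted. Hence $\wt_{\cB}(\lambda)=\wt_{\cB'}(\lambda)$. (Equivalently, for dominant such $\lambda$ one has $\cP_\lambda\supseteq\cP_1$, so the Rees degeneration of Lemma \ref{Lem:vcdGbundlesintrinsic} only sees the common reduction $\cE_{\cP_1}$; linearity of $\wt$ then propagates the equality from the shared dominant cone to all of $X_*(Z(\cL_1))_{\bR}$.)

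For part (2) I would first use (1) to discard the central directions: writing $\check\alpha_{i_0}=\check\alpha_{i_0}^{\cL_1}+z$ with $z\in X_*(Z(\cL_1))_{\bR}$, the pieces $\wt_\cB(z)$ and $\wt_{\cB'}(-z)$ cancel by (1); and for each root $a\neq\pm a_{i_0}$ the common bundle $\cu_a^{\cE}$ is paired with $\langle a,\check\alpha_{i_0}\rangle$ in the first summand and with the opposite sign in the second, so these cancel as well, as do the character terms. What survives is exactly the contribution of $\cu_{\pm a_{i_0}}^{\cE}$, i.e. a purely rank-one quantity on the $\cL_1$-bundle $\cE_{\cL_1}$: collecting terms, $\wt_\cB(\check\alpha_{i_0})+\wt_{\cB'}(-\check\alpha_{i_0})$ is a positive multiple of the degree measuring the relative position of the two opposite Borel reductions. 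The inequality $\leq 0$ then expresses that two generically distinct opposite reductions cannot both be positive: their transversality produces a nonzero morphism of the associated line bundles — in the $\GL_2$ model, two generically transverse line subbundles $L,L'\subset V$ admit the nonzero composite $L\hookrightarrow V\twoheadrightarrow V/L'$, which forces $\deg L+\deg L'\leq\deg V$ — and translating this through the identification of $\cu_{\pm a_{i_0}}^{\cE}$ with these line bundles yields the claim.

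The step I expect to be the main obstacle is the ramified correction in part (2). For a constant reductive group the transversality inequality is immediate and sharp, but for a Bruhat--Tits group scheme the Killing-form duality $\cu_{-a_{i_0}}^{\cE}\cong(\cu_{a_{i_0}}^{\cE})^{\vee}$ holds only away from $\Ram(\cG)$, failing along the divisor $D_{a_{i_0}}=\sum_x f_{a_{i_0},x}^{\cB}\,x$. I would therefore run the transversality argument at the level of the parahoric (parabolic-degree) invariants rather than ordinary degrees, and verify that the ramification contributions enter with the right sign using the bound $\bigl|f_{a,x}^{\cB}/\rk(\cu_a)\bigr|\leq 1$ established after \eqref{wtEquation} (and, where needed, the admissibility hypothesis on $\cL$). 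Checking that these corrections do not spoil the inequality is the delicate point.
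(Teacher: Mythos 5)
Your part (1) and your rank-one transversality mechanism are essentially the paper's (the paper runs (1) even more cheaply: after replacing $\cE$ by its Rees degeneration along a $\lambda$ dominant for both Borels one may assume $\cE$ is induced from an $\cL_1$-torsor, and then (1) is immediate since the Rees construction for central $\lambda$ does not move $\cE_{\cL_1}$). But your part (2) contains a genuine gap: the asserted cancellation of all roots $a\neq\pm a_{i_0}$ is false. For a root $c$ in the unipotent radical of $\cP_1$, the bundles $\cu_c^{\cB}$ and $\cu_c^{\cB^\prime}$ are \emph{not} the same object determined by $\cE_{\cP_1}$: only the $a_{i_0}$-string sums $\cu_{b*}=\oplus_{c=b+na_{i_0}}\cu_c$ are associated to $\cE_{\cP_1}$, while the individual root spaces are graded pieces of two filtrations of $\cu_{b*}^{\cE}$ that are opposite at the generic point, and their degrees differ. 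Concretely, for $\GL_3$ with $\cB,\cB^\prime$ given by flags $V_1\subset V_2\subset V$ and $V_1^\prime\subset V_2\subset V$ and $L=V/V_2$, one has $\cu_{b+a}^{\cB}=\cHom(L,V_1)$ but $\cu_{b+a}^{\cB^\prime}=\cHom(L,V_2/V_1^\prime)$, whose degrees differ by $\deg V_2-\deg V_1-\deg V_1^\prime$; a direct computation gives $\wt_{\cB}(\check{a})+\wt_{\cB^\prime}(-\check{a})=12(\deg V_1+\deg V_1^\prime-\deg V_2)$, of which only $8(\cdots)$ comes from the Levi roots $\pm a$ and $4(\cdots)$ from the string. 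So the identity ``what survives is exactly the contribution of $\cu_{\pm a_{i_0}}$'' is wrong; the dropped terms happen to be $\leq 0$, but that is an extra statement requiring proof. The paper proves it in the final paragraph of its argument by running your transversality trick once more on each string: the common bundle $\cE_{\cP_1}\times^{\cP_1}\cu_{b*}$ carries the two generically opposite filtrations, whence $\sum_{c=b+na}(\deg\cu_c^{\cB}-\deg\cu_c^{\cB^\prime})\langle c,\check{\alpha}_{i_0}\rangle\leq 0$. Your proof is fixable by the same technique you already use, but as written it asserts a false cancellation and omits a needed estimate.

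A second, related gap is your treatment of the ramification terms. The claim that $\langle\chi_x^{\cB},\lambda\rangle=\langle\chi_x^{\cB^\prime},\lambda\rangle$ ``independent of the chosen Borel'' is not justified: these pairings depend on the relative position of $\cB_x^{\cE}$ and $\cB_x^{\prime,\cE}$ in the special fiber $\cG_x^{\cE}$. The paper's case analysis is where statement (2) is actually won: if the two Borels are opposite at $x$ then $\langle\chi_x^{\cB_1},\check{\alpha}_{i_0}\rangle=-\langle\chi_x^{\cB_1^\prime},-\check{\alpha}_{i_0}\rangle$ and the character terms cancel, and this happens exactly when $\cu_a^{\cB}\to\cu_a^{\cB^\prime}$ is an isomorphism at $x$; if instead the Borels are parallel at $x$ the character terms double, but then the comparison map degenerates at $x$ and the surplus is absorbed against the $f_{a,x}^{\cB}$-terms using the admissibility hypothesis $2\langle\chi_x,\check{a}\rangle\leq\rk\cu_a\,\langle a,\check{a}\rangle$. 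You correctly flag this as the delicate point, but deferring it is not an option here: without this coupling of the local degeneracy of $\cu_a^{\cB}\to\cu_a^{\cB^\prime}$ to the character terms (and without admissibility) the inequality in (2) simply fails, so this verification is an essential part of the proof rather than a final sanity check.
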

\begin{proof}
    As the weight only depends on the Rees construction for $\cE$ we may replace $\cE$ by the Rees construction applied to any $\lambda$ that is dominant for $\cB$ and $\cB^\prime$. Thus we may assume that $\cE = \cE_{\cL_1} \times^{\cL_1} \cG^{\cE}$ is induced from a $\cL_1$ torsor.
    In this case (1) is immediate from the definition, as the Rees construction for $\lambda \in Z(L_1)$ does not change $\cE_{\cL_1}$.

    Moreover, $\cL_1$ is of semisimple rank $1$ and $\cB_1:=\cB\cap\cL_1$ and $\cB^\prime_1\cap \cL_1$ are Borel subgroups that are opposite over the generic point $\eta\in C$. Let us denote by $\cu,\cu^\prime$ the Lie algebras of the corresponding unipotent radicals in $\cL_1$ and by $\cu^-:= \Lie(\cL_1)/ \Lie(\cB_1)$ and similarly $\cu^{\prime,-}$. Then we get an injective homomorphism
    $$ \cE_{\cB_1} \times^{\cB_1} \cu \to \ad(\cE_{L_1}) \to \ad(\cE_{L_1})/\ad(\cE_{\cB^\prime_1})\cong \cE_{\cB^\prime_1} \times^{\cB^\prime_1} \cu^{-,\prime}.$$
    If $a$ is a multipliable root then the homomorphism respects the filtration on $\cu,\cu^{\prime,-}$ given by the roots $a,2a$.
    %Therefore $\deg(\cE_{\cB})) \leq \deg(\cU_{\alpha_{i_0}}(\cE_{\cB^\prime}))$.
    So we find:
    \begin{align*}
     \deg(\cu_{a}^{\cB}) &\leq \deg(\cu_{a}^{\cB^\prime}) = - \deg(\cu_{-a}^{\cB^\prime}) -  \sum_{x\in \Ram(\cG)} f_{-a,x}^{\cB^\prime} \\
     -  \deg(\cu_{a}^{\cB}) -\sum_{x\in \Ram(\cG)} f_{a,x}^{\cB} = \deg(\cu_{-a}^{\cB}) &\geq \deg(\cu_{-a}^{\cB^\prime})
    \end{align*}
    an the same hods for $\cu_{2a}$ if $2a$ is also a root.
%   and equality holds only if the Rees construction for $\cB$ and $\cB^\prime$ gives isomorphic $\cT$-bundles.

Thus:
$$ 2(\deg(\cu_a^{\cB}) + \deg(\cu_{-a}^{\cB^\prime}) ) \leq   \sum_{x\in \Ram(\cG)} f_{-a,x}^{\cB^\prime} + f_{a,x}^{\cB}.$$
Moreover, the map $\cu_a^{\cB} \to \cu_{a}^{\cB^\prime}$ is an
isomorphism at $x$ if and only if the groups $\cB_x^{\cE}$ and
$\cB_x^{\prime,\cE}$ are opposite in $\cG^{\cE}_x$. And in this case
$\langle \chi_x^{\cB_1},\check{\alpha}_{i_0} \rangle = - \langle
\chi_x^{\cB_1^\prime},-\check{\alpha}_{i_0} \rangle $. If the
morphism is not an isomorphism, then the Borel subgroups are
parallel in $x$, so that $\langle
\chi_x^{\cB_1},\check{\alpha}_{i_0} \rangle = \langle
\chi_x^{\cB_1^\prime},-\check{\alpha}_{i_0} \rangle $. Thus if $2
\langle \chi_x, \check{a} \rangle \leq \rk{\cu}_a \langle a,
\check{a} \rangle $ for all roots $a$ we find:
$$ 2(\deg(\cu_a^{\cB}) + \deg(\cu_{-a}^{\cB^\prime})) \langle a, \check{a} \rangle  \leq  \langle  \sum_{x\in \Ram(\cG)} f_{-a,x}^{\cB^\prime} + f_{a,x}^{\cB} + \chi_x^{\cB_1}-\chi_x^{\cB_1^\prime} , \check{a} \rangle .$$
And this means:

\begin{align*}
\wt_{\cB_1}(\check{\alpha}_{i_0}) +
\wt_{\cB_1^\prime}(-\check{\alpha}_{i_0}) & \leq 0
%= - 2 \sum_{a\in \bN a_{i_0} } \deg(\cu_a^\cB) \langle a, \check{\alpha}_{i_0}\rangle + \sum_{x\in \Ram(\cG)} \langle \chi_x^{\cB} + \sum f_{a,x}^{\cB}, \check{\alpha}_{i_0}\rangle \\
%&-2 \sum_{-a\in \Phi^+} \deg(\cu_{-a}^{\cB^\prime}) \langle -a, -\check{\alpha}_{i_0}\rangle +  \sum_{x\in \Ram(\cG)} \langle \chi_x^{\cB^\prime}+ \sum_{-a\in \Phi^+_{\cB}} f_{-a,x}^{\cB^\prime} a, -\check{\alpha}_{i_0} \rangle \\
%&= - 2 \sum_{a\in \bN a_{i_0} } \deg(\cu_a^\cB) + \deg(\cu_{-a}^{\cB^\prime}) \langle a, \check{\alpha}_{i_0}\rangle \\
%& +    \sum_{x\in \Ram(\cG)} \langle \chi_x^{\cB} - \chi_x^{\cB^\prime} + \sum f_{a,x}^{\cB} + f_{-a,x}^{\-\cB}, \check{\alpha}_{i_0}\rangle \\
\end{align*}
if $\un{\chi}$ is admissible.

To compute $\wt_{\cB}(\check{\alpha}_{i_0}) +
\wt_{\cB^\prime}(-\check{\alpha}_{i_0})$ we note that $\cE_{\cP_1}
\cong \cE_{\cB_1} \times^{\cB_1}  \cP_1 \cong \cE_{\cB_1^\prime}
\times^{\cB^\prime_1} \cP_1$ and the unipotent radical of
$\Lie(\cP_1)$ has a filtration by $\cL$-invariant subspaces such
that over the generic point the associated graded pieces are
isomorphic to the unipotent groups $\cu_{b*,\eta} = \oplus_{c=b+na}
\cu_{c,\eta}^{\cB} = \oplus_{c=b-na} \cu_{c,\eta}^{\cB^\prime}$.
Moreover since $a$ is a positive root for $\cB_1$ and a negative
root for $\cB^\prime_1$ over $C$ the isomorphic bundles $\cE_{\cB_1}
\times^{\cB_1} \cu_{b*}$ and $\cE_{\cB_1^\prime}
\times^{\cB_1^\prime} \cu_{b*}$ come with canonical filtrations that
are opposite at the generic point. Therefore we find again that
$$ \sum_{c=b+na} (\deg( \cu_{c,\eta}^{\cB}) - \deg( \cu_{c,\eta}^{\cB^\prime})) \langle c, \check{\alpha}_{i_0} \rangle \leq 0$$
Summing over all $b$ we obtain the result.
\hfill $\Box$
\end{proof}

We define $\cL-\deg (\cE_{\cB})$ by the formula
$$ (\cL-\deg,\lambda) := - \wt_{\cE_{\cB}}(\lambda).$$
Then the previous lemma shows that $\cL-\deg$ defines a
complementary polyhedron as defined by Behrend \cite{Behrend}. As in
\cite[Section 4.3]{HS} we therefore obtain:

\begin{proposition}
    Suppose $\cL_{\det,\un{\chi}}$ is an admissible line bundle and $\cE$ a $\cG$-torsor. Then there exists a reduction $\lambda \colon \bG_m \to \cG_{\eta}^{\cE}$ such that $\cE_{\cP_\lambda}$ is a reduction for which $\frac{\wt(\lambda)}{||\lambda||}$ is maximal and such that for every other such reduction to a parabolic subgroup $\cQ_{\lambda^\prime}$ we have $\cQ_{\lambda^\prime} \subset \cP_{\lambda}$.
\end{proposition}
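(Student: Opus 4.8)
The plan is to recognize that all the geometric content has been packaged into linear-algebra data for which Behrend's theory of complementary polyhedra applies, exactly as in the case of $G$-bundles treated in \cite[Section 4.3]{HS}. Note first that since $\cG$ is reductive over the generic point $\eta$, the inner form $\cG^{\cE}_{\eta}$ is a reductive group over the field $k(C)$, so a reduction $\lambda\colon \bG_m \to \cG^{\cE}_{\eta}$ is literally a reduction of structure group to a parabolic of a reductive group over a field, which is Behrend's setting. Fixing a maximal split torus $S\subset \cG^{\cE}_{\eta}$ I would work in the Euclidean space $(X_*(S)_{\bR},(,))$ with the invariant form of Section \ref{sec:stabcond}. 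For each Borel subgroup $\cB\supset S$ the function $\lambda\mapsto \wt_{\cE_{\cB}}(\lambda)$ is linear on the closed Weyl chamber of cocharacters dominant for $\cB$, and is represented there, via $(,)$, by the single vector $\cL-\deg(\cE_{\cB})$. Globally $\lambda\mapsto\wt_{\cE}(\lambda)$ is only piecewise linear, the representing vector jumping from chamber to chamber, so no single functional governs it; instead the family $\{\cL-\deg(\cE_{\cB})\}_{\cB}$ is precisely the data of a complementary polyhedron.

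The second step is to verify the defining axioms of a complementary polyhedron, which in the present normalization reduce to the two assertions of the preceding lemma. Part (1) supplies the compatibility of the vectors $\cL-\deg(\cE_{\cB})$ and $\cL-\deg(\cE_{\cB^\prime})$ along the common wall of two neighboring chambers, namely their agreement on $Z(\cL_1)$; part (2) supplies Behrend's convexity inequality $\wt_{\cB}(\check{\alpha}_{i})+\wt_{\cB^\prime}(-\check{\alpha}_{i})\leq 0$. Combined with the boundedness lemma, which guarantees that $\mu_{\max}(\cE)=\sup_{\lambda}\wt_{\cE}(\lambda)/||\lambda||$ is finite, this exhibits a genuine finite complementary polyhedron in the sense of \cite{Behrend}.

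With the axioms in place I would invoke Behrend's structure theorem verbatim. It produces a unique canonical vector, equivalently the point at which the slope $\wt_{\cE}(\lambda)/||\lambda||$ attains its maximum $\mu_{\max}(\cE)$; the face containing this vector determines a parabolic $\cP_\lambda\subset \cG^{\cE}_{\eta}$ together with a distinguished cocharacter $\lambda\in X_*(Z(\cL_{\cP_\lambda}))$, and $\cE_{\cP_\lambda}$ is the desired canonical reduction. Strict convexity of $||\cdot||$ pins down the maximizing direction uniquely, and Behrend's comparison of neighboring reductions upgrades this to the containment statement: for any competing reduction $\cQ_{\lambda^\prime}$ with $\wt_{\cE}(\lambda^\prime)/||\lambda^\prime||=\mu_{\max}(\cE)$ one must have $\cQ_{\lambda^\prime}\subset\cP_\lambda$, since otherwise crossing into a neighboring chamber would, by inequality (2), strictly raise the slope and contradict maximality.

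I expect no serious obstacle remaining at this stage, because the one genuinely new difficulty has already been absorbed into the preceding lemma. For a parahoric $\cG$ the number $\deg(\cu_a^{\cE})$ is no longer linear in the root $a$ and carries the ramification contributions $f_{a,x}^{\cB}$, which a priori could violate the convexity inequality that a complementary polyhedron demands. The admissibility hypothesis $2\langle \chi_x,\check{a}\rangle\leq \rk(\cu_a)\langle a,\check{a}\rangle$ was imposed exactly to dominate these terms and restore inequality (2); once that single inequality is secured, the abstract combinatorial machinery of \cite{Behrend} runs unchanged, and the proof is complete.
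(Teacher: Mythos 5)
Your proposal is correct and takes essentially the same route as the paper: the paper's proof likewise sets $(\cL-\deg,\lambda):=-\wt_{\cE_{\cB}}(\lambda)$, observes that the preceding lemma (agreement on $Z(\cL_1)$ plus the inequality $\wt_{\cB}(\check{\alpha}_i)+\wt_{\cB^\prime}(-\check{\alpha}_i)\leq 0$, secured by admissibility) verifies Behrend's axioms for a complementary polyhedron, and then cites \cite{Behrend} and \cite[Section 4.3]{HS} to obtain the maximizing reduction and the containment statement. Your additional detail (chamber-wise linearity of the weight, finiteness of $\mu_{\max}$ from the boundedness lemma) is exactly the content the paper delegates to those references.
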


\begin{lemma}[Semicontinuity of instability]\label{lem:semicont}
    Let $\cL_{\det,\un{\chi}}$ be an admissible line bundle on $\Bun_{\cG}$.
    Let $R/k$ be a discrete valuation ring with fraction field $K$ and residue field $\kappa$ and let  $\cE_R$ be a $\cG$-torsor on $C_R$.
    \begin{enumerate}
        \item[\rm (1)] If $\cE_K$ is unstable and the canonical reduction is defined over $K$, then $\cE_{\kappa}$ is unstable and
        $$\mu_{max}(\cE_K) \leq \mu_{max}(\cE_{\kappa}).$$
        The equality is strict, unless the canonical reduction of $\cE_K$ extends to $R$.
        \item[\rm (2)]  If $\cE_K$ is semistable but not stable then $\cE_{\kappa}$ is also not stable.
    \end{enumerate}
\end{lemma}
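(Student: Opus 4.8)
Both parts rest on a single mechanism: the destabilizing reduction of $\cE_K$ can be spread out across the special fibre, and under this specialization the instability weight can only increase. The plan is to establish this for the canonical (Behrend) reduction in part (1), read off the inequality and its strictness, and then apply the same estimate to a weight-zero reduction in part (2).

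\emph{Extending the reduction.} Assume $\cE_K$ is unstable and its canonical reduction is defined over $K$; by the preceding proposition this is a reduction $\cE_{K,\cP}$ of $\cE_K$ to the closure $\cP\subset\cG^{\cE_K}$ of a parabolic of the generic fibre, together with a dominant $\lambda\colon\bG_m\to\cT\subset\cP$ maximizing $\tfrac{\wt(\lambda)}{||\lambda||}$. Such a reduction is a section of the $C_K$-scheme $\cE_K/\cP\to C_K$, which is proper (over the reductive locus its fibres are flag varieties, and over $\Ram(\cG)$ properness holds because $\cP$ contains a generic parabolic). Viewing $C_K\subset C_R$ as the complement of the prime divisor $C_\kappa$ in the regular surface $C_R=C\times_k\Spec R$, the valuative criterion of properness at the generic point of $C_\kappa$ (a codimension-one, hence discretely valued, point of $C_R$) extends the section over all of $C_R$ away from finitely many closed points of $C_\kappa$; properness of $\cE_\kappa/\cP\to C_\kappa$ over the smooth curve $C_\kappa$ fills in these points. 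This produces a reduction $\cE_{\kappa,\cP'}$ of $\cE_\kappa$ to a closure $\cP'$ of the same type, carrying the same cocharacter $\lambda$ (its conjugacy class being locally constant, cf. the use of smoothness of $\Transp$ in Lemma \ref{Lem:ReesG}).

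\emph{The weight estimate and part (1).} Using the degree formula (\ref{wtEquation}) and decomposing $\lambda$ into fundamental coweights, the relevant destabilizing part $\bigoplus_{a\in\Phi^+}\cu_a^{\cE}\hookrightarrow\ad(\cE)$ is a subsheaf of $\ad(\cE)$; it extends over $R$ to an $R$-flat subsheaf of $\ad(\cE_R)$, whose restriction to $C_\kappa$ is a subsheaf of $\ad(\cE_\kappa)$ of the same degree, while the unipotent radical of $\cE_{\kappa,\cP'}$ is its saturation. Since saturating a subsheaf of a bundle on a curve cannot decrease the degree, and $\langle a,\lambda\rangle\ge0$ on $\Phi^+$ by dominance, we get $\wt_{\cE_\kappa}(\lambda)\ge\wt_{\cE_K}(\lambda)$, with equality exactly when that subsheaf is already saturated, i.e. when $\cE_{K,\cP}$ extends to an honest reduction over $R$. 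As $\wt_{\cE_K}(\lambda)>0$ this already forces $\cE_\kappa$ to be unstable, and
$$\mu_{\max}(\cE_\kappa)\ \ge\ \frac{\wt_{\cE_\kappa}(\lambda)}{||\lambda||}\ \ge\ \frac{\wt_{\cE_K}(\lambda)}{||\lambda||}\ =\ \mu_{\max}(\cE_K),$$
the last inequality being strict unless the canonical reduction extends to $R$; this is (1).

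\emph{Part (2) and the main difficulty.} If $\cE_K$ is semistable but not stable, then $\mu_{\max}(\cE_K)\le0$ together with the failure of the strict inequality (note a positive-dimensional automorphism group produces a central $\lambda$ of weight $0$ as well) yields a nontrivial $\lambda$ with $\wt_{\cE_K}(\lambda)=0$, i.e. a weight-zero reduction $\cE_{K,\cP}$ to a proper parabolic. Extending it exactly as above gives a nontrivial reduction $\cE_{\kappa,\cP'}$ with $\wt_{\cE_\kappa}(\lambda)\ge0$: if this weight is positive then $\cE_\kappa$ is unstable, and if it vanishes then $\cE_\kappa$ carries a nontrivial weight-zero reduction; in either case the strict inequality defining stability fails and $\cE_\kappa$ is not stable. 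The step I expect to be the main obstacle is the weight comparison including its ramification contributions: one must verify that the terms $\langle\chi_x^{\cB},\lambda\rangle$ and the correction coefficients $f^{\cB}_{a,x}$ attached to $x\in\Ram(\cG)$ behave compatibly with the specialization (equivalently, that $-\wt_{\cE}(\lambda)$ is the degree of a single coherent sheaf on $C$ whose specialization is controlled), so that the characterization of equality is precisely ``the reduction extends over $R$''; the properness of $\cE/\cP$ at the ramified points, needed to extend the reduction in the first place, is the other delicate input.
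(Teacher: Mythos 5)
Your part (1), and the weight-zero half of part (2), follow the same route the paper takes: the paper disposes of (1) by citing \cite[Lemma 4.4.2]{HS}, which is precisely your mechanism of extending the canonical reduction across the special fibre and observing that saturation can only increase the weight. Two local points there are imprecise but repairable: $\bigoplus_{a\in\Phi^+}\cu_a^{\cE}$ is the associated graded of the filtration of $\ad(\cE)$ coming from the reduction, not itself a subsheaf, so you should extend the filtration steps (degrees add, so the estimate survives); and your claim that the finitely many missing points of the section of $\cE/\cP$ are filled in ``by properness of $\cE_\kappa/\cP\to C_\kappa$'' is not a valid step --- a section over the complement of finitely many closed points of the regular $2$-dimensional scheme $C_R$ need not extend, and this indeterminacy is exactly where the degree jumps; but since your saturation argument produces the special-fibre reduction independently, this does not break the proof of (1).

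The genuine gap is in part (2). In this paper stability is two separate conditions: all very close degenerations have weight $<0$, \emph{and} $\dim\Aut_{\cM}(x)=0$. So ``semistable but not stable'' splits into two cases: either $\cE_K$ admits a weight-zero degeneration (your case), or $\dim\Aut(\cE_K)>0$ with no weight-zero degeneration in sight. Your parenthetical ``a positive-dimensional automorphism group produces a central $\lambda$ of weight $0$'' is unjustified and false as stated: $\Aut(\cE_K)$ can be unipotent --- for instance the $\PGL_2$-torsor attached to a nonsplit self-extension of a degree-zero line bundle has automorphism group $\bG_a$ --- so it need not contain any $\bG_m$ at all. One can try to extract a degree-zero parabolic reduction from a unipotent automorphism of a semistable torsor (via the kernel filtration, as for vector bundles), but that is a nontrivial assertion, delicate in positive characteristic and for twisted parahoric $\cG$, and you neither state nor prove it. The paper sidesteps it entirely: it handles $\dim\Aut(\cE_K)>0$ by a direct semicontinuity argument, noting that $\cG^{\cE_R}\to C_R$ is an affine group scheme of finite type, that the global automorphisms are the spectrum of the pushforward algebra, and that if the generic fibre is not a finite $K$-algebra then the special fibre is not finite either, so $\cE_\kappa$ fails the automorphism condition for stability. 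To close part (2) you must either add this semicontinuity argument or actually prove your parenthetical claim.
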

\begin{proof} The first part follows as in \cite[Lemma 4.4.2]{HS}.
    This also shows that if $\cE_K$ admits a reduction of weight $0$, then $\cE_{\kappa}$ also cannot be stable.

    Finally suppose $\dim \Aut(\cE_{K})>0$. We know that $\cG^{\cE_R}\map{p} C_R$ is an affine group scheme of finite type over $C_R$. The group of global automorphisms of this group scheme is $\Spec p_*(\cO_{\cG^{\cE_R}})$, so the generic fiber of this is not a finite $K$-algebra. But then by semi continuity also the special fiber will not be finite.
    \hfill $\Box$
\end{proof}
    %%%%
\subsection{Boundedness for stable $\cG$-torsors}
%%%%
With the canonical reduction at hand we can now deduce:

\begin{proposition}\label{prop:open}
    Let $\cL_{\det,\un{\chi}}$ be an admissible line bundle on $\Bun_{\cG}$.
    Then the stacks  $\Bun_{\cG}^{st} \subset \Bun_{\cG}^{sst} \subset \Bun_{\cG}$ of $\cL_{\det,\un{\chi}}$-(semi)-stable $\cG$-torsors are open substacks of finite type.
\end{proposition}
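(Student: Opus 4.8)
The plan is to prove the two assertions in the order \emph{finite type first, openness second}, since boundedness is exactly the input that makes the openness argument run. Throughout I work component by component, i.e. with the topological type of the torsor fixed; concretely this fixes $\deg(\Ad(\cE))$ together with the discrete invariants of the adjoint bundle.

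\textbf{Boundedness (finite type).} For a semistable $\cE$ every reduction $\cE_{\cB}$ to the closure of a Borel satisfies $\wt_{\cE_{\cB}}(\lambda)\le 0$ for all dominant $\lambda$, so by the weight formula (\ref{wtEquation}) each degree $\deg(\cu_a^{\cE})$ is bounded above by a constant depending only on $\un{\chi}$, the ramification data and the fixed topological type. Since $\deg(\Ad(\cE))=\deg(\ad(\cE_{\cT}))+\sum_{a}\deg(\cu_a^{\cE})$ is fixed, bounding each summand above also bounds it below; hence the Harder--Narasimhan polygon of the vector bundle $\Ad(\cE)$ lies in a fixed bounded region. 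First I would invoke the classical fact that vector bundles on $C$ of fixed rank and degree with bounded Harder--Narasimhan polygon form a bounded family, and then transport this to $\cG$-torsors by choosing a faithful representation $\cG\hookrightarrow \GL_{N,C}$ and arranging that the induced morphism $\Bun_{\cG}\to\Bun_{\GL_N}$ is of finite type, so that the preimage of a bounded family is again bounded. This exhibits $\Bun_{\cG}^{sst}$ as quasi-compact over each component, hence of finite type.

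\textbf{Openness.} With boundedness in hand I would deduce openness from the valuative criterion: a subset of a stack locally of finite type is open once it is constructible and stable under generization, and stability under generization of the (semi)stable locus is equivalent to stability under specialization of its complement. The latter is precisely the content of the semicontinuity Lemma~\ref{lem:semicont}: over a discrete valuation ring, part~(1) gives that an unstable generic fibre has unstable special fibre, while parts~(1)--(2) together give that a non-stable generic fibre has non-stable special fibre; after a finite base change (harmless since (semi)stability is insensitive to field extension, Remark~\ref{Rem:L-stable}(5)) the canonical reduction is defined at the generic point, so the hypotheses of the Lemma are met. Constructibility of the unstable (resp. non-stable) locus I would get from Remark~\ref{Rem:GloballyFinitelyManyConjClassesOfP}: up to local conjugacy there are only finitely many closures $\cP\subset\cG$ of generic parabolics, the maps $\Bun_{\cP}\to\Bun_{\cG}$ are of finite type, and by boundedness only finitely many numerical types of reduction can destabilize a torsor in a fixed bounded family, so the unstable locus is a finite union of images of constructible sets, constructible by Chevalley. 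A constructible, specialization-stable subset of a Noetherian space is closed, so both complements are open. For the stable locus one additionally needs $\dim\Aut(\cE)=0$ to be an open condition, which follows from upper semicontinuity of $h^0(C,\cG^{\cE})$, as already exploited in the proof of Lemma~\ref{lem:semicont}.

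\textbf{Main obstacle.} The genuinely delicate point is the boundedness step, specifically passing from the numerical bounds on the $\deg(\cu_a^{\cE})$ to honest boundedness of the family of torsors. The space of reductions of a $\GL_N$-bundle to $\cG$ is a space of sections of a fibration with non-proper fibre $\GL_N/\cG$ (non-proper because $\cG$ is only parahoric, not reductive, at the ramification points), so keeping the morphism $\Bun_{\cG}\to\Bun_{\GL_N}$ of finite type requires real care; this is where I expect the main work to lie, and an alternative is to argue via the affine Grassmannian uniformization $\GR_{\cG}\to\Bun_{\cG}$ used in Proposition~\ref{Prop:BunGstar}. A secondary subtlety, relevant in positive and mixed characteristic, is the field of definition of the canonical reduction (Behrend's problem): one must ensure that, after a finite extension, it descends so that Lemma~\ref{lem:semicont}(1) genuinely applies, and working throughout with geometric points together with the field-extension invariance of (semi)stability circumvents this.
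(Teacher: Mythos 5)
Your skeleton matches the paper's: Lemma \ref{lem:semicont} gives closure of the complement under specialization, Remark \ref{Rem:GloballyFinitelyManyConjClassesOfP} reduces constructibility to finitely many $\cP$, and boundedness is the core. But the core step is broken. It is \emph{not} true that semistability bounds each $\deg(\cu_a^{\cE})$ above over all Borel reductions: the inequalities $\wt_{\cE_{\cB}}(\lambda)\le 0$ for dominant $\lambda$ control only the pairings with fundamental coweights, i.e.\ certain non-negative combinations of the root degrees, and simple roots are not non-negative combinations of fundamental weights, so no bound on individual $\deg(\cu_a^{\cE})$ follows. Concretely, take $\cG=\SL_3\times C$, $\cE=\cO_C^{3}$ (semistable of degree $0$) and the flag $L=\cO_C\subset V=\cO_C\oplus\cO_C(-N)\subset \cO_C^{3}$: then $\deg \cHom(V/L,L)=N$ is unbounded, while $\wt_{\cE_{\cB}}(\check\omega_1)=2(N-N)=0$ and $\wt_{\cE_{\cB}}(\check\omega_2)=2(-2N-N)=-6N\le 0$, so all semistability inequalities hold. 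Hence your claimed bound on the Harder--Narasimhan polygon of $\Ad(\cE)$ does not follow from the weight formula. What is true, and what the paper uses, is that semistability bounds $\mu_{\max}(\cE)=\sup_\lambda \wt_{\cE}(\lambda)/||\lambda||$; converting large slopes of subsheaves of $\ad(\cE)$ into reductions with large $\wt(\lambda)/||\lambda||$ is exactly the canonical-reduction/complementary-polyhedron theory of Section 3.6, not an inequality for an arbitrary Borel reduction --- in positive characteristic this distinction is Behrend's whole point.

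Second, the step you yourself flag as the ``main obstacle'' is where the paper's proof actually lives, and neither of your suggested routes is carried out. The paper does not embed $\cG$ into $\GL_N$ at all; it quotes Behrend's boundedness theorem for \emph{reductive} group schemes over $C$ and transports it in two moves: (i) for $\cG'$ Iwahori at all points of $\Ram(\cG')$ mapping to a reductive $\cG_0$, the morphism $\Bun_{\cG'}\to\Bun_{\cG_0}$ is smooth with flag-variety fibers and $\Lie(\cG')\to\Lie(\cG_0)$ has finite-length cokernel, so $\mu_{\max}$ changes by a uniform constant; (ii) in general one passes to a Galois cover $\pi\colon \tilde{C}\to C$ splitting $\cG$ generically, extends to a $\Gamma$-equivariant group scheme $\tilde{\cG}$, and compares weights via $\pi_*\ad(\pi^*\cE)=\ad(\cE)\otimes\pi_*\cO_{\tilde{C}}$, which shows the weight upstairs is $\deg(\pi)$ times the weight downstairs, so a destabilizing canonical reduction of $\pi^*\cE$ descends. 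Similarly, your constructibility step presupposes the quantitative facts the paper proves rather than assumes: that the degrees of canonical reductions landing in a fixed finite-type substack form a finite set (bounded below at fundamental weights because the $\cL$-degree comes from a complementary polyhedron, bounded above by the $\mu_{\max}$ bound, via equation (\ref{wtEquation})), and that $\Bun_{\cP}$ has only finitely many connected components in each degree (Proposition \ref{Prop:pi0BunG}, via $\Bun_{\cP}\to\Bun_{\cL}$); your appeal to Chevalley has no content until these are in place. The parts of your proposal that do agree with the paper are the openness-from-semicontinuity mechanism and the treatment of the stable locus (strictly semistable bundles admit a reduction of maximal slope $0$, and $\dim\Aut(\cE)=0$ is handled by semicontinuity as in Lemma \ref{lem:semicont}).
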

\begin{proof}
    By Lemma \ref{lem:semicont} instability and strict semistability are stable under specialization. Therefore we only need to show that $\Bun_{\cG}^{sst}$ is constructible and contained in a substack of finite type.
    Again we argue as in \cite{Behrend}. First we show that for any $c\geq 0$ the stack of $\cG$-torsors of fixed degree satisfying $\mu_{\max}(\cE) \leq c$ is of finite type. To prove this we may suppose that $\chi=0$, as the linear function $\lambda \mapsto \langle \chi,\lambda \rangle$ only changes $\frac{\wt(\lambda)}{||\lambda||}$ by a finite constant.

    By \cite{Behrend} the claim holds if $\cG=\cG_0$ is a reductive group scheme over $C$. Moreover if $\cG^\prime \to \cG_0$ is a parahoric group scheme mapping to $\cG_0$ such that $\cG^\prime$ is an Iwahori group scheme at all $x\in \Ram(\cG^\prime)$ then the morphism $\Bun_{\cG^\prime} \to \Bun_{\cG_0}$ is a smooth morphism with fibers isomorphic to a product of flag varieties $\cG_{0,x}/\cB_x$. Moreover the cokernel of $\Lie(\cG^\prime) \to \Lie(\cG_0)$ is of finite length. Thus there exists a constant $d$ such that for any $\cE$ is a $\cG^\prime$ torsor we have that if  $\cE\times^{\cG^\prime}\cG_0$ admits a reduction of slope $\mu(\cE\times^{\cG^\prime}\cG_0) > c+d$ then this reduction induces a reduction of $\cE$ of slope $\mu(\cE) >c$.

    Therefore the result also holds for $\cG^\prime$. Now any parahoric group scheme contains an Iwahori group scheme so that by the same reasoning the result also holds if $\cG$ is any parahoric Bruhat--Tits group scheme such that $\cG|_{C-\Ram(\cG)}$ admits an unramified extension.

    Now choose $\pi\colon \tilde{C} \to C$ a finite Galois covering with group $\Gamma$ such that $\pi^*\cG|_{C-\Ram(\cG)}$ is a generically split reductive group scheme on $\tilde{C}-\pi^{-1}(\Ram(\cG))$, equipped with a $\Gamma$-action. We can extend this group scheme to a Bruhat--Tits group scheme $\tilde{\cG}$ that is $\Gamma$ equivariant and admits an morphism $\pi^*(\cG) \to \tilde{\cG}$ that is an isomorphism over $\tilde{C}-\pi^{-1}(\Ram(\cG))$. We already know the result for $\Bun_{\tilde{\cG}}$. Now if $\cE$ is a $\cG$-torsor then $\tilde{\cE}:= \pi^*{\cE} \times^{\pi^* \cG} \tilde{\cG}$ is a $\Gamma$-equivariant $\tilde{\cG}$ torsor. Now if $\tilde{\cE}$ admits a canonical reduction to a parabolic subgroup $\tilde{\cP}$ this will define an equivariant reduction of $\pi^*\cE|_{C-\Ram(\cG)}$ and therefore a reduction of $\cE$.

    Again we can compare the weights of the reductions because
    $\pi_* \ad(\pi^*(\cE)) = \ad(\cE) \tensor \pi_*(\cO_{\tilde{C}})$.
    Therefore the determinant of the cohomology  $\det(H^*(\tilde{C}, \ad(\pi^* \cE)) = \det H^*( C ,\ad(\cE) \tensor \pi_* \cO_{\tilde{C}})$ defines a power of $\cL_{\det}$ on $\Bun_{\cG}$. This implies that the weight of the reduction of $\pi^*\cE$ is a just  $\deg(\pi)$-times the weight of the induced reduction of $\cE$. So again a very destabilizing canonical reduction of $\tilde{\cE}$ induces a destabilizing reduction of $\cE$.

    We are left to show that the stable and semistable loci are constructible. We saw that unstable bundles admit a canonical reduction to some $\cP\subset \cG$, i.e., they lie in the image of the natural morphism $\Bun_\cP \to \Bun_{\cG}$.
    From Remark \ref{Rem:GloballyFinitelyManyConjClassesOfP} we know that it suffices to consider the image of this morphism for finitely many $\cP$.

    To conclude, we need to see that for every substack of finite type $U \subset \Bun_{\cG}$ only finitely many connected components of $\Bun_{\cP}$ contain canonical reductions that map to $U$. We just proved that on $U$ the slope $\mu_{\max}$ is bounded above.

    We claim that for the canonical reduction of a $\cG$-bundle with $\mu_{\max}\leq c$ the degree of the corresponding $\cP$-bundle $\un{\deg} \in (X^*(\cP))^\vee$ lies in a finite set. By construction the canonical reduction was obtained from a complementary polyhedron and therefore the $\cL$-degree of the reduction that was defined from the weight of the reduction that is bounded below when evaluated at fundamental weights. As we bounded the $\mu_{max}$ this will also be bounded above.
    Now we saw that the from equation \ref{wtEquation} that the weight of the reduction can be computed from the degree of the reduction and a local term only depending on the group $\cP$. Therefore we find obtain our bound for the degree of the $\cP$-bundle.

    To conclude we use that the number of connected components of $\Bun_{\cP}$ of a fixed degree is finite. This is known for $\cG$-bundles (see Proposition \ref{Prop:pi0BunG}  ). From this we can deduce our statement by looking at the Levi quotient $\cP \to \cL$ and the morphism $\Bun_{\cP} \to \Bun_{\cL}$, which is smooth with connected fibers, because the kernel $\cU\to \cP$ has a filtration by additive groups.

    For the stack of stable points we can argue in the same way, oserving that strictly semistable bundles admit a reduction to a parabolic with maximal slope equal to $0$.
    \hfill $\Box$
\end{proof}

%%%%
\subsection{Conclusion for $\cG$-torsors}
%%%%

\begin{theorem}\label{Thm:parahoric_moduli}
    Let $\cG$ be a parahoric Bruhat--Tits group scheme that splits over a tamely ramified extension $\tilde{k(C)}/k(C)$ and let $\cL=\cL_{\det,\un{\chi}}$ be an admissible line bundle on $\Bun_{\cG}$. Then the stack of $\cL$-stable $\cG$ bundles $\Bun_{\cG}^{st}$ admits a separated coarse moduli space of finite type over $k$.
\end{theorem}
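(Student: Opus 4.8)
The plan is to deduce the theorem by assembling the three structural results already established for $\Bun_{\cG}$ and feeding them into the abstract existence statement of Proposition \ref{Prop:coarse}, which produces a separated coarse moduli space from exactly three inputs: $(\Bun_{\cG},\cL)$ almost proper, $\cL$ nef on exceptional lines, and the stable locus $\Bun_{\cG}^{st}\subset\Bun_{\cG}$ open. Before invoking it I would record that $\Bun_{\cG}$ meets the standing assumptions: it is an algebraic stack locally of finite type, and its diagonal is affine because the automorphism group scheme $\Aut_{\cG/C}(\cE)$ of a torsor $\cE$ is an inner form of the affine group scheme $\cG$, hence affine. This is precisely what Proposition \ref{Prop:coarse} needs in order to apply the Keel--Mori theorem.

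Almost-properness together with nef-ness on exceptional lines is the content of Proposition \ref{Prop:BuncGstar}, whose hypothesis is that $\glue_x^*\cL_{\det,\un\chi}$ be nef on each affine Grassmannian $\Gr_{\cG,x}$. The step to carry out is therefore to show that \emph{admissibility} of $\cL_{\det,\un\chi}$ forces this positivity. The determinant factor $\cL_{\det}$ is automatically nef on $\Gr_{\cG,x}$, being pulled back from a determinant bundle on a Grassmannian $\Gr_{\GL_N,x}$ (the Remark following Section \ref{PicG}); the correction coming from the characters $\chi_x$ is governed exactly by the admissibility inequality $2\langle\chi_x,\check a\rangle\le \rk(\cu_a)\langle a,\check a\rangle$. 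Here the tameness hypothesis enters: splitting $\cG_\eta$ over a tame extension makes the positivity on $\Gr_{\cG,x}$ explicit, in the manner of the computation cited from Zhu, so that admissibility translates into the required nef condition.

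Openness of $\Bun_{\cG}^{st}$ and the fact that it is of finite type are precisely Proposition \ref{prop:open}, again for an admissible $\cL_{\det,\un\chi}$; tameness reappears there through the choice of a Galois covering $\pi\colon\tilde C\to C$ generically splitting $\cG$, which reduces the boundedness estimate to the known split reductive case via the comparison $\pi_*\ad(\pi^*\cE)=\ad(\cE)\otimes\pi_*\cO_{\tilde C}$. With all three hypotheses of Proposition \ref{Prop:coarse} in force, $\Bun_{\cG}^{st}$ admits a coarse moduli space $M$ that is a separated algebraic space, and since $\Bun_{\cG}^{st}$ is of finite type the space $M$ is of finite type over $k$. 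I expect the real work to be concentrated in the middle step — converting the combinatorial admissibility bound into honest nef-ness on the affine Grassmannians — because this is where the tame-splitting hypothesis and the explicit Grassmannian positivity computations must be brought to bear.
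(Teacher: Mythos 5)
Your proposal is correct and takes essentially the same route as the paper, whose entire proof is exactly this assembly: Proposition \ref{prop:open} gives that $\Bun_{\cG}^{st}$ is an open substack of finite type, Proposition \ref{Prop:BuncGstar} supplies the hypotheses of Proposition \ref{Prop:sep} (almost properness and nef-ness on exceptional lines), and Proposition \ref{Prop:coarse} then produces the separated coarse moduli space. The ``middle step'' you isolate --- converting admissibility plus tame splitting into nef-ness of $\glue_x^*\cL_{\det,\un{\chi}}$ on $\Gr_{\cG,x}$ --- is precisely where the paper is terse, handling it only through the remark that $\cL_{\det}$ is always nef there and the reference to Zhu's positivity computation in the tame case, so your assessment of where the real work is concentrated is accurate.
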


\begin{proof}
This now follows from Proposition \ref{Prop:coarse}, because the
stack of stable $\cG$-torsors $\Bun_{\cG}^{st}$ is an open substack
by Proposition \ref{prop:open} and it satisfies the conditions of
Proposition \ref{Prop:sep} by Proposition \ref{Prop:BuncGstar}.
\hfill $\Box$
\end{proof}

\begin{remark}{\rm
    For admissible line bundles $\cL=\cL_{\det,\un{\chi}}$ the results on the existence of canonical reductions for $\cG$-bundles allow  to copy the proof of the semistable reduction theorem using Langton's algorithm from \cite{SemistableReduction} and \cite{Addendum}. In particular in those cases where $\cL$-semistability is equivalent to $\cL$-stability this then implies that the coarse moduli spaces are proper.}
\end{remark}

%\subsection{Coarse moduli for stable $\cG$-torsors and an application to purity}
%%%%%%%%%%%%%%%%%%%%%%%%%%%%%
\section{Appendix: Fixing notations for Bruhat--Tits group schemes}\label{AppendixBT}
%%%%%%%%%%%%%%%%%%%%%%%%%%%%%
In this appendix we collect the results from \cite{BT2} on the
structure of Bruhat--Tits group schemes that we use. As the
definition is local let us fix a discrete valuation ring $R$ with
fraction field $K$ and residue field $k$ and $G_K$ a reductive group
over $K$. In general the groups are defined by descent starting from
the quasi-split case over an unramified extension of $R$. In our
applications we can always extend the base field and if $k=\kbar$
the group $G_K$ is quasi-split by the theorem of Steinberg
\cite[Section 8.6]{BorelSpringer}. We will therefore assume that
$G_K$ is quasi-split.

We choose a maximal split torus $S_K \subset G_K$ and denote
$T_K:=Z(S_K)$ the centralizer of $S$ which is a maximal torus of
$G_K$ because $G_K$ was quasi-split.\footnote{In \cite{BS} Balaji
and Seshadri study the case where $S_K=T_K$ is a split maximal
torus, i.e. the case where $G_K$ is a split reductive group, which
already shows many interesting features.}

To construct models of $G_K$ over $R$ Bruhat--Tits first extend the
torus $T_K$ to a scheme over $R$ and then the root subgroups of
$G_K$ using a pinning of $G_K$ that they upgrade to a
Chevalley-Steinberg valuation $\varphi$. (\cite{BT2} Section 4.2.1
and 4.1.3). Let us recall these notions:

\subsection{Chevalley--Steinberg systems, pinnings and valuations}
If $G_K$ is split, a Chevalley--Steinberg system is simply a pinning
of our group, i.e., an identification $x_\alpha\colon \bG_a
\map{\cong} U_\alpha$ which is compatible for $\alpha,-\alpha$ in
the sense that it comes from an embedding of $\zeta_\alpha \colon
\SL_{2,K} \to G_K$ identifying $\bG_a$ with the strict upper and
lower triangular matrices \cite[Paragraph 3.2.1]{BT2} and is
compatible with reflections \cite[Paragraph 3.2.2,
(Ch1),(Ch2)]{BT2}.

If $G_K$ is not split, we can split it over a Galois extension
$\tilde{K}$ and choose an equivariant pinning \cite[Paragraph
4.1.3]{BT2}: Let $\Gamma:=\Gal(\tilde{K}/K)$ and denote by
$\tilde{\Phi}$ the roots with respect to $T_{\tilde{K}}$ of
$G_{\tilde{K}}$ and $\Phi$ the roots for $S$. Then $X^*(S) =
X^*(S_{\tilde{K}})$ and the elements of $\Phi$ are the restrictions
of elements of $\tilde{\Phi}$ to $S$.

The root subgroups $U_a$ can then be described as follows. For any
root ray $a\in \Phi$ denote $\tilde{\Delta}_a\subset \tilde{\Phi}$
the set of simple roots that restrict to $a$. The analog of the
$\SL_2$ defined by a root is a morphism $\zeta_a\colon G^a_K \to
G_K$. If $a$ is not a multiple root we have $$G^a_K \cong
\Res_{L_\alpha/K} \SL_2,$$ where $L_\alpha\subset \tilde{K}$ is the
field obtained by the stabilizer of any $\alpha \in
\tilde{\Delta}_a$. In this case a pinning is an isomorphism
$$\Res_{L_\alpha/K} \bG_a \map{\cong} U_a,$$ which is again assumed
to be compatible for $a$ and $-a$.  (\ \cite[Paragraph
4.1.7,4.1.8]{BT2})

If $a$ is a multiple root ray $G^a$ is the Weil restriction of a
unitary group\footnote{This happens if the Galois group interchanges
neighboring roots in the Dynkin diagram.}. In this case for any pair
$\alpha,\alpha^\prime \in \tilde{\Delta}_a$ such that
$\alpha+\alpha^\prime$ is a root let $L_\alpha =
\tilde{K}^{\Stab{\alpha}}$ which is a quadratic extension of
$L_{\alpha+\alpha^\prime} =:L_2$. This extension defines the unitary
group $SU_3(L_\alpha/L_2))$ over $L_2$ (with respect to the standard
hermition form). Then $$G^a = \Res_{L_2/K} SU_3(L_\alpha/L_2).$$

In this case the root subgroups $U_a,U_{-a}$ are of the form
$$ U_a(L_2) =\Big\{x_a(u,v)=\left(\begin{array}{ccc}
1 &-u^\sigma & -v \\ 0 & 1 & u \\ 0 & 0 & 1
\end{array}\right)\Big\},\quad U_{-a}(L_2) =\Big\{ x_{-a}(u,v)=\left(\begin{array}{ccc}
1 & 0 & 0 \\ u & 1 & 0 \\ -v & -u^\sigma &1
\end{array}\right)\Big\}$$
with $v+v^\sigma = uu^\sigma.$ This has a filtration $U_{2a} \cong
\{ v \in L_{\alpha}/L_2 | \tr(v) =0 \}$ and $U_a/U_{2a} \cong
L_{\alpha}$.

The Chevalley pinning induces valuations on the groups $U_a$. For
non multipliable roots one sets  $\phi_\alpha(x_\alpha(u)):=|u|$ and
for multiple roots one defines $\phi_a(x_a(u,v)):=\1halb |v|$ and
$\phi_{2a}(x_a(0,v)):= |v|$.

(These choices define a valued root system and identify the standard
appartment $A\cong X_*(S)_{\bR}$ in the Bruhat--Tits building of
$G$).

\subsection{Parahoric group schemes}

With this notation, we can recall the construction of an open part
of parhoric group schemes.

For the torus $T_K$ Bruhat and Tits choose a version of the N\'eron
model as extension. In order to be consistent with the conventions
from \cite{PR} we choose the {\em connected} N\'eron model $\cT_R$
as an extension over $\Spec R$.

For the unipotent groups $U_a$, the valuations introduced above can
be used to define extensions of $U_a$ to group schemes $\cU_{a,k}$
over $\Spec R$ for any $k\in \bR$. For non-multiple roots the
pinning identifies the abstract group $U_{a,k} := \{ u\in U_a(K) |
\phi_a(u) \leq k\} \cong \{ u\in L_a| |u|\leq k\}$ and this can be
equipped with the structure of a groups scheme isomorphic to
$\Res_{R_a/R} \bG_a$.

For multiple roots this is slightly more complicated to spell out,
but again these group schemes always correspond to free $R$ modules
\cite[4.3.9]{BT2}.

The open subset of a parahoric group scheme will be of the form
(\cite[Theorem 3.8.1]{BT2}):
$$ \prod_{a\in \Phi^-} \cU_{a,f(a)} \times \cT \times \prod_{a\in \Phi^+} \cU_{a,f(a)}.$$

Now a facet $\Omega\subset A$ defines a valuation of the root system
(4.6.26)
$$f(a):= \inf \{k\in \bR | a(x)+k \geq 0 \forall a \in \Omega \},$$
here we used our Chevalley-Steinberg valuation, which defines an
isomorphism $-\phi\colon A\cong X_*(S)_\bR$.

The product described above carries a birational group law
(\cite[Propostion 5.12]{Landvogt}) and thus one can use
\cite[Theorem 5]{BLR} to construct a the group scheme $\cG_{\Omega}$
(denoted by $\cG_{\Omega}^\circ$ in \cite{BT2}), containing the
product as an open neighborhood of the identity. For our
computations this is sufficient as these only use the Lie-algebra of
$\cG$.

\section{Appendix: Basic results on $\Bun_\cG$}

As in Section \ref{Sec:Parahoric} we fix a smooth projective
geometrically connected curve $C$ over an algebraically closed field
$k$ and $\cG\to C$ a parahoric Bruhat-Tits group scheme. In this
appendix we collect some results on the stack of $\cG$-bundles
$\Bun_\cG$ for which we could not find a reference.

The basic tool will be the Beilinson--Drinfeld Grassmannian
$\GR_\cG$ i.e., the ind-projective scheme representing the functor
of $\cG$-bundles together with a trivialization outside of a finite
divisor on $C$:
$$\GR_{\cG}(T) = \left\{ (\cE,D,\phi) \left| { \cE\in \Bun_{\cG}(T), D\in C^{(d)}(T) \text{ for some d} \atop \phi \colon \cE|_{C\times T - D} \map{\cong} \cG \times_C (C\times T - D) }\right.\right\}.$$

It comes equipped with a natural forgetful maps
$$ p\colon \GR_{\cG} \to \Bun_{\cG},$$
$$ \supp\colon \GR_{\cG} \to \coprod_d C^{(d)}.$$

For Bruhat-Tits groups it will be useful to consider the open
subfunctor, where the divisor $D$ does not intesect some fixed
finite subset of $C$, e.g., the ramification points of the group
$\cG$. Given $S\subset C$ finite, we will denote by
$\GR_{\cG,C-S}:=\supp^{-1} \big(\coprod (C-S)^{(d)}\big) $, i.e. the
ind-scheme parametrizing $\cG$ bundles together with a
trivialization outside of a divisor $D$ that is disjoint form $S$.
Similarly we denote by $\Gr_{\cG,x}:=\supp^{-1}(x)$, the space
parametrizing $\cG$-bundles equipped with a trivialization on
$C-\{x\}$.

The following is a probably well-known geometric version of a weak
approximation theorem:
\begin{proposition}\label{Prop:WeakApproximiation}
    For any Bruhat-Tits scheme $\cG \to C$ and any finite subscheme $S\subset C$ the natural forgetful map $p_{C-S}\colon \GR_{\cG,C-S} \to \Bun_\cG$ is surjective, i.e., after possibly passing to a flat extension, any $\cG$-bundle admits a trivialization on an open neighborhood of $S$.
\end{proposition}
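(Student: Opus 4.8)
The plan is to check surjectivity on geometric points: a geometric point of $\Bun_\cG$ is a $\cG$-torsor $\cE$ on $C$ over an algebraically closed field $K\supseteq k$, and replacing $k$ by $K$ we may assume $k=\kbar$ (this is the ``flat extension'' allowed in the statement). Unwinding the definition of $\GR_{\cG,C-S}$, the claim to prove becomes purely geometric: every $\cG$-torsor $\cE$ on $C$ admits a trivialization on a Zariski open $V\supseteq S$, equivalently on $C-D$ for an effective divisor $D$ with $\supp(D)\cap S=\emptyset$. I would produce such a $V$ by combining a formal trivialization at the points of $S$ with a generic trivialization, glued by a single global gauge transformation. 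Note that after enlarging $S$ we may assume $\Ram(\cG)\subseteq S$, so that no separate treatment of the non-reductive fibres will be needed beyond the formal triviality established below.

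First I would establish formal triviality at $S$. For each $s\in S$ the special fibre $\cE_s$ is a torsor under the smooth group $\cG_s$ over the algebraically closed field $k$, hence is a smooth $k$-scheme with a rational point, and therefore trivial. Since $\cG$ is smooth over $C$, the torsor $\cE\times_C\Spec\widehat{\cO}_{C,s}$ is smooth over the complete local ring $\widehat{\cO}_{C,s}$, so by Hensel's lemma the point of the closed fibre lifts to a section; thus $\cE$ is trivial on each $\Spec\widehat{\cO}_{C,s}$. Second I would use generic triviality: the restriction $\cE_\eta$ is a torsor under the connected reductive group $\cG_\eta$ over the function field $k(C)$ of a curve over an algebraically closed field, and such torsors are trivial (the standard generic-triviality input in the spirit of Drinfeld--Simpson; over $\kbar$ the field $k(C)$ has cohomological dimension $\le 1$). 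A section over $\eta$ spreads out over a dense open, so $\cE$ is trivial on some dense open $U_0\subseteq C$.

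The remaining, and hardest, step is to align these local pictures so that the trivial locus contains $S$. Write $\tau$ for the trivialization on $U_0$ and $\sigma_s$ for the formal trivialization on $\widehat{\cO}_{C,s}$; on each punctured disc they differ by an element $g_s\in\cG(K_s)$, where $K_s$ is the fraction field of $\widehat{\cO}_{C,s}$, whose class in $\Gr_{\cG,s}=\cG(K_s)/\cG(\widehat{\cO}_{C,s})$ is precisely the obstruction to extending $\tau$ across $s$. I would look for a global element $h\in\cG(k(C))$ with $h\,g_s\in\cG(\widehat{\cO}_{C,s})$ for every $s\in S$ simultaneously; changing $\tau$ by $h$ then yields a trivialization on an open $V\supseteq S$, whence $D:=C-V$ is disjoint from $S$ and we are done (the family and non-closed-point cases then follow after an fppf base change on the parameter scheme, again the flat extension of the statement). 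The existence of such an $h$ is a weak-approximation statement for $\cG_\eta$ at the finite set $S$, namely density of $\cG(k(C))$ in $\prod_{s\in S}\cG(K_s)$, which is exactly why the proposition carries that name. The main obstacle is that weak approximation may fail for a general reductive group; I would circumvent this by reducing, through the valued-root-datum description of the parahoric $\cG$ from Appendix \ref{AppendixBT}, to the maximal torus and the root subgroups, where weak approximation is classical, and by exploiting the extra freedom to enlarge the auxiliary divisor $D$ away from $S$ in order to absorb any approximation defect.
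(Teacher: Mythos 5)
Your architecture (formal triviality at $S$ by smoothness and Hensel, generic triviality by Steinberg/Borel--Springer, then regluing the two trivializations by a single global element $h\in\cG(k(C))$) is sound, and you correctly identify the crux as finding $h\in \cG(\widehat{\cO}_{C,s})\,g_s^{-1}$ for all $s\in S$ simultaneously. But that last step contains a genuine gap. First, the weak approximation you invoke is \emph{not} classical in the generality needed: reducing via the big cell to $\cU^{-}\times\cT\times\cU^{+}$ handles the unipotent factors, but the maximal torus $\cT_\eta=Z(S_\eta)$ of a quasi-split reductive group is quasi-trivial (an induced torus, hence rational with weak approximation) only in the simply connected case, where $X_*(\cT_{\overline\eta})$ is a permutation Galois module; for general reductive $\cG_\eta$ weak approximation for $\cT_\eta$ can fail over general fields, and establishing it over $k(C)$ --- in arbitrary characteristic, including wildly ramified places, which the proposition does not exclude --- requires a genuine argument (z-extensions or flasque resolutions plus vanishing theorems), not a citation. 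Second, your proposed escape hatch --- ``enlarge the auxiliary divisor $D$ away from $S$ to absorb any approximation defect'' --- is vacuous: $h$ is a rational section whose poles outside $S$ are \emph{already} absorbed into $D$ by construction, so the only constraints on $h$ are the open conditions at the points of $S$, which is precisely where a weak approximation defect would bite; there is no remaining freedom.

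The paper avoids approximation entirely. It first proves the case $S=\emptyset$ from Borel--Springer/Steinberg (any $\cG$-torsor is generically trivial over $\kbar(C)$), so $p\colon\GR_\cG\to\Bun_\cG$ is fppf surjective; since $\GR_{\cG,C-S}\subset\GR_\cG$ is a dense open subfunctor, the image of $p_{C-S}$ contains a dense open of every connected component of $\Bun_\cG$. Then, given an arbitrary $\cE$, it applies the same statement to the inner form $\cG^{\cE}=\Aut_\cG(\cE)$ and uses the twisting equivalence $\Bun_{\cG^{\cE}}\cong\Bun_\cG$ to find a single bundle $\cE'$ lying in both images: there are divisors $D_1,D_2$ on $C-S$ with $\cE'|_{C-D_1}\cong\cG|_{C-D_1}$ and $\cE'|_{C-D_2}\cong\cE|_{C-D_2}$, and composing these isomorphisms trivializes $\cE$ off $D_1\cup D_2$, which avoids $S$ (this is the argument of \cite[Theorem 5]{Unif}). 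In effect the paper trades your local-global gluing problem for a second application of generic triviality to the twisted group, which is exactly what sidesteps the torus difficulty. Your route is essentially the Drinfeld--Simpson strategy and can be carried through when $\cG_\eta$ is simply connected; to cover general parahoric $\cG$ you would end up needing the inner-twist trick (or serious approximation theorems) anyway.
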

\begin{proof}
    For $S=\emptyset$ this follows from a theorem of Steinberg and Borel-Springer \cite{BorelSpringer}, stating that for any algebraically closed field $\overline{K}$ any $\cG$-bundle on $C_{\overline{K}}$ is trivial over the generic point $\overline{K}(C)$. As any such trivialization is defined over an open subset this allows to deduce that $p\colon \GR_{\cG} \to \Bun_{\cG}$ is surjective in the fppf-topology.

    To deduce the result for $p_{C-S}$ we can argue as in \cite[Theorem 5]{Unif}:
    As $\GR_{\cG,C-S} \subset \GR_{\cG} $ is a dense open subfunctor, it follows that the generic point of any connected component of $\Bun_{\cG}$ is in the image of $p_{C-S}$.
    Let $\cE\in \Bun_{\cG}$ be any bundle. For the inner form $\cG^{\cE}:=\Aut_{\cG}(\cE)$ of $\cG$ we can apply the same reasoning and find that the image of the map $\GR_{\cG^{\cE},C-S} \to \Bun_{\cG^\cE} \cong \Bun_\cG$ also contains an open subset of every connected component. In particular there exist $\cG$-bundles $\cE^\prime$ in the image that are also in the image of $p_{C-S}$. For such any such  bundle $\cE^\prime$ there exist divisors $D_1,D_2$ on $C-S$ such that $\cE^\prime|_{C-D_1} \cong \cG|_{C-D_1}$ and $\cE^\prime|_{C-D_2} \cong \cE|_{C-D_2}$. Composing these isomorphisms we find that $\cE|_{C-(D_1\cup D_2)} \cong \cG|_{C-(D_1 \cup D_2)}$, i.e., $\cE$ is also in the image of $p_{C-S}$. Thus $p_{C-S}$ is surjective on geometric points. As it is the restriction of a flat morphism to an open subfunctor this implies thst it is again fppf surjective.
    \hfill $\Box$
\end{proof}
To formulate properties of the set of connected components of
$\Bun_{\cG}$ we will need some more notation. The generic point of
$C$ will be denoted by $\eta=\Spec k(C)$ and $\overline{\eta}$ will
be a geometric generic point. From \cite[Theorem 0.1]{PR} we know
that for any $x\in C$ there is a natural isomorphism
$\pi_0(\Gr_{\cG,x}) \cong \pi_1(\cG_{\overline{\eta}})_I$ where
$I=\Gal({\overline{K}_x/K_x})$ is the inertia group at $x$ and
$\pi_1(\cG_{\overline{\eta}})$ is the algebraic fundamental group,
i.e., the quotient of the coweight lattice by the coroot lattice of
$\cG_{\overline{\eta}}$.

We denote by $X^*(\cG)=\Hom(\cG,\bG_{m,C})$ the group of characters
of $\cG$. As $\cG$ is a smooth group scheme with connected fibers
$X^*(\cG) \cong X^*(\cG_\eta) =
X^*(\cG_{\overline{\eta}})^{\Gal(\overline{\eta}/\eta)}$. As any
character $\chi\colon \cG \to \bG_m$ defines a morphism $\Bun_{\cG}
\to \Pic_X$ given by $\cE \mapsto \cE(\chi):=\cE\times^{\cG}\bG_m$
it defines a degree $\un{d}\colon \Bun_{\cG} \to X^*(\cG)^\vee$ by
$\cE \mapsto \deg(\cE(\chi))$ and we will denote by
$\Bun_{\cG}^{\un{d}} \subset \Bun_{\cG}$ the substack of bundles of
degree $\un{d}$, which is open and closed because the degree of line
bundles is locally constant in families.

\begin{proposition}\label{Prop:pi0BunG}
    \begin{enumerate}
        \item[]
        \item[\rm (1)] The natural map $\pi_1(\cG_{\overline{\eta}})_{\Gal(\overline{\eta}/\eta)} \to \pi_0(\Bun_\cG)$ is surjective.
        \item[\rm (2)] For any $\un{d}\in \Hom(X^*(\cG),\bZ)$ the stack $\Bun_{\cG}^{\un{d}}$ has finitely many connected components.
    \end{enumerate}
\end{proposition}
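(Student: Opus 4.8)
The plan is to deduce both parts from the uniformization $p\colon \GR_\cG \to \Bun_\cG$ together with the computation $\pi_0(\Gr_{\cG,x}) \cong \pi_1(\cG_{\overline{\eta}})_{I_x}$ recalled above. Since Proposition \ref{Prop:WeakApproximiation} (applied with $S=\emptyset$) shows that $p$ is surjective, the induced map $\pi_0(\GR_\cG) \to \pi_0(\Bun_\cG)$ is surjective, so it is enough to produce a surjection $\pi_1(\cG_{\overline\eta})_{\Gal(\overline\eta/\eta)} \to \pi_0(\GR_\cG)$. First I would fix a closed point $x\in C$ and observe that, because $\supp$ is ind-proper and each $C^{(d)}$ is connected, every connected component of $\GR_\cG$ dominates $C^{(d)}$ and hence meets the fibre $\Gr_{\cG,x}=\supp^{-1}(d\cdot x)$ for suitable $d$ (concretely, specializing the support divisor $D$ of a given point to a divisor concentrated at $x$ keeps it in the same component). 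Thus the composite $\pi_1(\cG_{\overline\eta})_{I_x}=\pi_0(\Gr_{\cG,x}) \to \pi_0(\GR_\cG)$ is already surjective.

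Next I would show this surjection factors through the full Galois coinvariants. The relative affine Grassmannian over $C$ has locally constant $\pi_0$ with fibre $\pi_1(\cG_{\overline\eta})_{I_x}$, and the monodromy of moving the point $x$ is induced by the action of $\Gamma=\Gal(\overline\eta/\eta)$ on $\pi_1(\cG_{\overline\eta})$; since $\pi_0(\GR_\cG)$ is discrete and each $C^{(d)}$ is connected, the map above is monodromy-invariant and therefore factors through $\pi_1(\cG_{\overline\eta})_\Gamma$, which gives (1). Making the monodromy-and-fusion statement precise — that colliding points of $D$ add their classes while transporting a point along a loop applies the Galois action — is the step carrying the real content; this is where the factorization structure of the Beilinson--Drinfeld Grassmannian (following \cite{PR}) must be invoked, and I expect it to be the main obstacle.

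For (2), part (1) already exhibits $\pi_0(\Bun_\cG)$ as a quotient of the finitely generated abelian group $\pi_1(\cG_{\overline\eta})_\Gamma$, hence it is finitely generated. The degree morphism induces $d\colon \pi_0(\Bun_\cG) \to X^*(\cG)^\vee$, and I would check that its precomposition with the surjection from $\pi_1(\cG_{\overline\eta})_\Gamma$ is the map coming from the natural pairing of coweights against $X^*(\cG)=X^*(\cG_{\overline\eta})^{\Gamma}$. A short linear-algebra computation then shows this pairing map has finite kernel: writing $G=\cG_{\overline\eta}$ with maximal torus $T$, the coroots span the radical of the pairing $X_*(T)_\bQ \times X^*(G)_\bQ \to \bQ$, so $\pi_1(G)_\bQ=X_*(T)_\bQ/\Phi^\vee_\bQ$ is canonically dual to $X^*(G)_\bQ$; since invariants and coinvariants agree rationally for the finite group $\Gamma$, the induced map $\pi_1(G)_{\Gamma,\bQ}\to X^*(\cG)^\vee_\bQ$ is an isomorphism. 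Consequently each fibre of $d$ is the image of a coset of a finite group, hence finite, so every $\Bun_\cG^{\un{d}}$ has only finitely many connected components.
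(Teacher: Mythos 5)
Your outline is correct and is essentially the paper's own route: surjectivity of $p\colon \GR_\cG \to \Bun_\cG$, reduction of $\pi_0(\GR_\cG)$ to the $\pi_0$ of fibres computed in \cite{PR}, Galois-equivariance to pass to the full coinvariants, and, for (2), the rational comparison of invariants and coinvariants. The main difference is how the step you flag as ``the main obstacle'' gets closed, and here you are anticipating more difficulty than there is: no fusion or factorization structure is needed. In your own reduction the fibre $\supp^{-1}(d\cdot x)$ is simply $\Gr_{\cG,x}$ (a trivialization off the divisor $d\cdot x$ is the same as a trivialization off $x$), so colliding points never enter; the paper's variant reduces each component of $\GR^d_\cG$ to the preimage of the diagonal $C\subset C^{(d)}$, which is $\GR^1_\cG$, and then treats the whole family over $C$ at once. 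The monodromy-invariance is then a one-line consequence of \cite[Theorem 0.1]{PR}: the fibrewise isomorphism $\pi_0(\Gr_{\cG,x})\cong \pi_1(\cG_{\overline{\eta}})_{I_x}$ is the Kottwitz homomorphism, which \emph{by construction} is induced from a Galois-equivariant map from $\pi_1(\cG_{\overline{\eta}})$ to the sheaf of connected components of the fibres of $\GR^1_\cG\to C$; taking $\pi_0$ of the total space therefore automatically factors through the $\Gal(\overline{\eta}/\eta)$-coinvariants. So the missing content is not a fusion argument but equivariance already built into the construction you cite.

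One caveat: your inference ``$\supp$ is ind-proper and $C^{(d)}$ is connected, hence every connected component dominates $C^{(d)}$'' is not a valid general implication --- a proper map from a connected space can have image a single point. The paper asserts the corresponding step (every component of the ind-projective $\GR^d_\cG$ meets the fibres over the diagonal) with the same brevity, so your parenthetical degeneration of the support divisor is at the paper's level of detail; but if you want it airtight you should carry out that degeneration inside a bounded closed projective piece of $\GR^d_\cG$. For (2), your linear-algebra computation is exactly the expansion of the paper's one-sentence proof that $\Hom(X^*(\cG),\bZ)^{I}\to\Hom(X^*(\cG),\bZ)_{I}$ is an isomorphism up to torsion, and the compatibility of the Kottwitz class with $\deg\cE(\chi)$ that you say you would check is the standard pairing computation $\langle\chi,\mu\rangle$ for a modification of type $\mu$; that part of your argument is complete and correct.
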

\begin{proof}
    The first part follows from the surjectivity of $\GR_\cG \to \Bun_{\cG}$ and the description of the connected components $\pi_0(\Gr_{\cG,x}) \cong \pi_1(\cG_{\overline{\eta}})_I$ from \cite{PR} as follows. Let $\GR^d_\cG$ denote the components of $\GR_{\cG}$ mapping to $C^{(d)}$. As the projection $\GR^d_\cG$ is ind-projective every connected component intersects the fibers over the diagonal $C \subset C^{(d)}$. The preimage of the diagonal is isomorphic to $\GR_{\cG}^1$. For $\GR_{\cG}^1$ the fiber wise isomorphism   $\pi_0(\Gr_{\cG,x}) \cong \pi_1(\cG_{\overline{\eta}})_I$ is given by the Kottwitz homomorphism which by construction is induced from a Galois-equivariant  map $\pi_1(\cG_{\overline{\eta}})$ to the sheaf of connected components of the fibers of $p$. Thus this induces a surjection $\pi_1(\cG_{\overline{\eta}})_{\Gal(\overline{\eta}/\eta)} \to \pi_0(\GR_\cG^1) \to \pi_0(\Bun_{\cG})$, which proves (1).

    This implies (2), because the map $ \Hom(X^*(\cG),\bZ)^I \to \Hom(X^*(\cG),\bZ)_I$ induces an isomorphism up to torsion.
    \hfill $\Box$
\end{proof}
%
%\begin{lemma}
%   Let $\cP\subset \cG$ be the closure of a generic parabolic subgroup of $\cG_\eta$ and $\cL=\cP/\cU$ the corresponding Levi quotient. The map $\cP \to \cL$ induces a morphism $\Bun_{\cP} \to \Bun_{\cL}$. This morphism is surjective with connected fibers in particular $\pi_0(\Bun_{\cP}) \cong \Bun_{\cL}$.
%\end{lemma}

\providecommand{\bysame}{\leavevmode\hbox to3em{\hrulefill}\thinspace}
%\providecommand{\MR}{\relax\ifhmode\unskip\space\fi MR }
% \MRhref is called by the amsart/book/proc definition of \MR.
%\providecommand{\MRhref}[2]{%
%  \href{http://www.ams.org/mathscinet-getitem?mr=#1}{#2}
%}
%\providecommand{\href}[2]{#2}
%\begin{thebibliography}{{Kem}92}
%
%
%\end{thebibliography}

\bibliographystyle{amsalpha}
\bibliographymark{References}
% This would change the heading "References" by "Bibliography"
% \renewcommand{\refname}{Bibliography}

\end{document}